\def\cite{\citet}
\numberwithin{equation}{section}
\numberwithin{equation}{section}
\theoremstyle{plain}
\newtheorem{theorem}{Theorem}[section]
\newtheorem{lemma}[theorem]{Lemma}
\newtheorem{corollary}[theorem]{Corollary}
\theoremstyle{definition}
\newtheorem{definition}{Definition}[section]
\newtheorem{example}[definition]{Example}
\newtheorem{remark}[definition]{Remark}
\newcounter{constr}
\newcounter{subconstr}
\newenvironment{constr}
{\smallskip\stepcounter{constr}\setcounter{subconstr}{1}%
\def\@currentlabel{\arabic{constr}\Alph{subconstr}}%
\noindent\textbf{Construction~\@currentlabel.} \begingroup\it}
{\endgroup\medskip}
\newenvironment{subconstr}
{\smallskip\stepcounter{subconstr}%
\def\@currentlabel{\arabic{constr}\Alph{subconstr}}%
\noindent\textbf{Construction~\@currentlabel.} \begingroup\it}
{\endgroup\medskip}
\newcommand{\IZ}{\mathbbm{Z}}
\newcommand{\IR}{\mathbbm{R}}
\def\%#1{\mathcal{#1}}
\newcommand{\law}{\mathscr{L}}
\newcommand{\eqlaw}{\stackrel{\mathscr{D}}{=}}
\def\ER{Erd\H{o}s-R\'enyi}
\def\dw{\mathop{d_{\mathrm{W}}}}
\def\dk{\mathop{d_{\mathrm{K}}}}
\newcommand\indep{\protect\mathpalette{\protect\independenT}{\perp}}
\def\independenT#1#2{\mathrel{\rlap{$#1#2$}\mkern2mu{#1#2}}}
\newcommand{\bigo}{\mathrm{O}}
\newcommand{\ssum}{\mathop{\textstyle\sum}}
\newcommand{\nsig}{\Sigma\kern-0.5em\raise0.2ex\hbox to 0pt{$\mid$}\kern0.5em}
\def\tsfrac#1#2{{\textstyle\frac{#1}{#2}}}
\newcommand{\toinf}{\to\infty}
\newcommand{\tozero}{\to0}
\newcommand{\eps}{\varepsilon}
\renewcommand{\phi}{\varphi}
\newcommand{\D}{\Delta}
\newcommand{\I}{\mathrm{I}}
\newcommand{\ahalf}{{\textstyle\frac{1}{2}}}
\newcommand{\anth}{{\textstyle\frac{1}{n}}}
\newcommand{\eq}{\eqref}
\newcommand{\IE}{\mathbbm{E}}
\newcommand{\IP}{\mathbbm{P}}
\newcommand{\Var}{\mathop{\mathrm{Var}}\nolimits}
\newcommand{\Cov}{\mathop{\mathrm{Cov}}}
\newcommand{\Vol}{\mathop{\mathrm{Vol}}}
\newcommand{\Bi}{\mathop{\mathrm{Bi}}}
\newcommand{\Be}{\mathop{\mathrm{Be}}}
\newcommand{\N}{\mathrm{N}}
\def\be#1\ee{\begin{linenomath}\begin{equation*}#1%
\end{equation*}\end{linenomath}}
\def\ben#1\ee{\begin{linenomath}\begin{equation}#1%
\end{equation}\end{linenomath}}
\def\bes#1\ee{\begin{linenomath}\begin{equation*}\begin{split}#1%
\end{split}\end{equation*}\end{linenomath}}
\def\besn#1\ee{\begin{linenomath}\begin{equation}\begin{split}#1%
\end{split}\end{equation}\end{linenomath}}
\def\bg#1\ee{\begin{linenomath}\begin{gather*}#1%
\end{gather*}\end{linenomath}}
\def\bgn#1\ee{\begin{linenomath}\begin{gather}#1%
\end{gather}\end{linenomath}}
\def\bm#1\ee{\begin{linenomath}\begin{multline*}#1%
\end{multline*}\end{linenomath}}
\def\bmn#1\ee{\begin{linenomath}\begin{multline}#1%
\end{multline}\end{linenomath}}
\def\ba#1\ee{\begin{linenomath}\begin{align*}#1%
\end{align*}\end{linenomath}}
\def\ban#1\ee{\begin{linenomath}\begin{align}#1%
\end{align}\end{linenomath}}
\def\klr#1{(#1)}
\def\bklr#1{\bigl(#1\bigr)}
\def\bbklr#1{\Bigl(#1\Bigr)}
\def\bbbklr#1{\biggl(#1\biggr)}
\def\bklrl{\bigl(}
\def\bklrr{\bigr)}
\def\kle#1{[#1]}
\def\bkle#1{\bigl[#1\bigr]}
\def\bbkle#1{\Bigl[#1\Bigr]}
\def\bklel{\bigl[}
\def\bbklel{\Bigl[}
\def\bkler{\bigr]}
\def\bbkler{\Bigr]}
\def\klg#1{\{#1\}}
\def\bklg#1{\bigl\{#1\bigr\}}
\def\bbbklg#1{\biggl\{#1\biggr\}}
\def\norm#1{\Vert#1\Vert}
\def\abs#1{\vert#1\vert}
\def\babs#1{\bigl\vert#1\bigr\vert}
\def\bbabs#1{\Bigl\vert#1\Bigr\vert}
\def\bbbabs#1{\biggl\vert#1\biggr\vert}
\def\mid{\vert}
\def\bmid{\bigm\vert}
\def\bbmid{\Bigm\vert}
\def\^#1{\ifmmode {\mathaccent"705E #1} \else {\accent94 #1} \fi}
\def\~#1{\ifmmode {\mathaccent"707E #1} \else {\accent"7E #1} \fi}
\def\*#1{#1^\ast}
\edef\-#1{\noexpand\ifmmode {\noexpand\bar{#1}} \noexpand\else \-#1\noexpand\fi}
\def\>#1{\vec{#1}}
\def\.#1{\dot{#1}}
\def\leq{\leqslant}
\def\geq{\geqslant}
\def\atop{\@@atop}
\def\now{%
\minute=\time%
\hour=\time \divide \hour by 60%
\hourMins=\hour \multiply\hourMins by 60%
\advance\minute by -\hourMins%
\zeroPadTwo{\the\hour}:\zeroPadTwo{\the\minute}%
}
\def\zeroPadTwo#1{\ifnum #1<10 0\fi#1}
\def\blfootnote{\xdef\@thefnmark{}\@footnotetext}
\def\phi{\varphi}
\def\^#1{\mathaccent"705E #1}
\def\~#1{\mathaccent"707E #1}
\def\trunc#1{\-{#1}}
\begin{document}
\title{Stein couplings for normal approximation}
\author{Louis H. Y. Chen \and Adrian R\"ollin}
\thanks{Version from \today, \now}

\begin{abstract} In this article we propose a general framework for normal
approximation using Stein's method. We introduce the new concept of \emph{Stein
couplings} and we show that it lies at the heart of popular approaches such as
the local approach, exchangeable pairs, size biasing and many other
approaches. We prove several theorems with which normal approximation for the
Wasserstein and Kolmogorov metrics becomes routine once a Stein coupling is
found. To illustrate the versatility of our framework we give applications in
Hoeffding's combinatorial central limit theorem, functionals in the classic
occupancy scheme, neighbourhood statistics of point patterns with fixed number
of points and functionals of the components of randomly chosen vertices of
sub-critical \ER\ random graphs. In all these cases, we use new, non-standard
couplings.
\end{abstract}
\maketitle

\vspace{-5mm}
{\small
\tableofcontents
}

\section{Introduction}

Since its introduction in the early 70s, Stein's method has gone through a vivid
development. First used by \cite{Stein1972} for normal approximation of
$m$-dependent sequences, it was gradually modified and generalized to other
distributions and different dependency settings. Among the few important
contributions that influenced the theoretical
understanding of the method is the book by \cite{Stein1986}, where the concepts
of \emph{auxiliary randomization} and \emph{exchangeable pairs} are introduced.
Another corner stone is the \emph{generator method}, independently introduced by
\cite{Barbour1988} and \cite{Goetze1991}, which allows for an adaptation of the
method to more complicated approximating distributions, such as compound Poisson
distribution, Poisson point processes and Gaussian diffusions.
\cite{Diaconis1991} give a connection between Stein's method and orthogonal
polynomials; see also \cite{Goldstein2005}, who related this approach to
distributional transformations. A more recent development was initiated by
\cite{Nourdin2009} and related articles, where a fruitful theory for normal
approximation for functionals of Gaussian measures, Rademacher sequences and
Poisson measures is developed. Despite these achievements,
relatively little effort has been put into building up a rigorous theoretical
framework for the method in order to unify and generalize the variety of known
results. 

In particular for normal approximation, a wide range of different approaches has
appeared over the last decades. Among the most prominent approaches are the
\emph{local approach}, dating back to \cite{Stein1972} and extensively studied
by \cite{Chen2004a}, the \emph{exchangeable pairs approach} by
\cite{Stein1986}, further developed by \cite{Rinott1997},
\cite{Rollin2007a} and \cite{Rollin2008} with
a variety of applications such as weighted $U$-statistics, anti-voter model on
finite graphs and models in statistical mechanics, and the \emph{size} and
\emph{zero bias couplings} by
\cite{Goldstein1996} and \cite{Goldstein1997}, respectively. Another approach
was introduced by \cite{Chatterjee2008} for functionals of independent
random variables (we will discuss a more general version of this
approach---called \emph{interpolation to independence}---in Section~\ref{sec10}).
In addition to these abstract approaches, many other ad-hoc constructions have
been used to tackle specific problems, such as \cite{Ho1978} and
\cite{Bolthausen1984} for the combinatorial limit theorem, \cite{Barbour1989}
and \cite{Rollin2008a} for refined versions of local dependence and
\cite{Barbour1986} and \cite{Zhao1997} for double indexed permutation
statistics. However, despite all these achievements, a unifying framework is
still missing and connections between the different approaches given in the
literature are vague, at best. Although making an attempt to systematically
discuss Stein's method, survey articles such as \cite{Reinert1998a} and
\cite{Rinott2000} illustrate the key issue here: for each approach a separate
theorem is proved, and then typically only for one specific metric. 

The reason for this seems to be the following. For all of these approaches,
the involved
random variables either have to satisfy some more or less abstract conditions or
some specific properties in the dependence structure are exploited to obtain the
results. This could be a defining equation like in the size-biasing
approach, a linearity assumption on a conditional expectation for
exchangeable pairs, a local dependence structure, or other properties and
conditions. Depending on the specific form of these conditions, the quantities
arising from using Stein's method---seemingly---have to be handeled differently.
Although in simpler applications it might be clear how to directly manipulate
these expressions in an ad-hoc way in order to successfully apply Stein's
method, this is less feasible for more complex situations. \cite{Chatterjee2008}
proposes an approach for functionals of finite collections of independent random
variables. His approach comes at the cost of a rather complicated bound, and for
many applications optimal results (in terms of moment conditions and metrics)
may not be obtained that way (c.f.\ the difference between
Corollaries~\ref{cor1} and~\ref{cor2} below).  It is crucial to exploit
properties of the random variables at hand in order to express the error bounds
in terms of simple and manageable expressions. To achieve this, we will
explore what the abstract key conditions are that allow for a successful
implementation of Stein's method for normal approximation---a question that has
not yet been addressed in the literature. Our framework provides such a set of
conditions along with a variety of ``plug-in'' type theorems. Not only does our
framework show the connection between all the above mentioned approaches, but
it also introduces some crucial generalisations and hence flexibility into
Stein's method for normal approximation. 

Our main tool is that of couplings; more specifically, we introduce the new
concept of \emph{Stein couplings}. We provide general approximation theorems
with respect to the Wasserstein and Kolmogorov metrics, where the error terms
are expressed in terms of the relationship between the involved coupled random
variables. Although we relate the different known approaches via Stein
couplings, our applications also show that the distinction usually made between
these approaches is rather artificial. Most of the couplings used in our
applications cannot be clearly assigned to one of the known approaches, but
emerge naturally from the problem at hand and are therefore constructed in a
more ad-hoc way. Nevertheless, in Section~\ref{sec3} we make an attempt to give
a systematic overview over the different coupling constructions, being very
well aware of the fact that other ways of classifying these couplings may
be equally reasonable.

\subsection{A short introduction to Stein's method}

We assume throughout this article that $W$ is a random variable whose
distribution is to be approximated by a standard normal distribution and we also
assume that $W$ has finite variance. Stein's method for normal approximation is
based on the fact that, for all, say, Lipschitz continuous function $f$ we
have 
\ben                                                        \label{1}
  \IE\klg{Zf(Z)} = \IE f'(Z)
\ee
if and only if $Z\sim\N(0,1)$. Now, if it is the case that
\ben                                                        \label{2}
  \IE\klg{Wf(W)} \approx \IE f'(W)
\ee
for many functions $f$, we would expect that $W$ is close to the normal
distribution. With Stein's method we can make this heuristic idea rigorous 
in the following sense. Assume that, in order to measure the closeness of
$\law(W)$ and $\law(Z)$, we would like to bound
\ben                                                      \label{3}
  \IE h(W) - \IE h(Z)
\ee
for some function~$h$ (take for example the half line indicators
for
the Kolmogorov metric). Solving the so-called \emph{Stein equation}
\ben                                                            \label{4}      
  f'(x)-xf(x) = h(x)-\IE h(Z)
\ee
for $f=f_h$, we can then express the quantity \eq{3} as
\ben                                                               \label{5}
  \IE h(W) - \IE h(Z) = \IE\bklg{f'(W)-Wf(W)} = \IE Af(W),
\ee
where $A$ is the operator defined by $Af(x) := f'(x)-xf(x)$. Hence, $\IE Af(W)$
measures the error in \eq{2} and the function $f$ relates this error to
the error of the approximation $\IE h(W) \approx \IE h(Z)$ via \eq{5} (see
\cite{Rollin2007} for a more detailed discussion).

Let us elaborate \eq{2} more rigorously. One way to express \eq{2} is to
assume that there are two random variables $T_1$ and $T_2$ such that 
\ben                                                            \label{6}
      \IE\bklg{Wf(W)} = \IE\bklg{T_1 f'(W+T_2)}
\ee
for all~$f$. Equations of the form \eq{6} are often called \emph{Stein
identities}, as they characterise in some sense the distribution of~$W$.
There are two important special cases of \eq{6}. If, for example,
$\Psi$ is a Gaussian field and $W=W(\Psi)$ a (smooth) functional of it,
\cite{Nourdin2009} use Maliavin calculus to derive \eq{6} with $T_2=0$ and they
give a more or less explicit expression for~$T_1$. In contrast,
in the zero-biasing approach as introduced by \cite{Goldstein1997}, it is
assumed
that \eq{6} holds for a specific $T_2$ where~$T_1 = 1$.

To illustrate the line of argument to obtain a final bound in its simplest form,
let us look at the case~$T_2=0$. We can write
\ben                                                                 \label{7}
    \IE A f(W) = \IE\bklg{(1-T_1)f'(W)}  = \IE\bklg{(1-\IE^W T_1)f'(W)},
\ee
so that the error in the normal approximation is given by 
\be
  \abs{\IE h(W)-\IE h(Z)} = \abs{\IE Af(W)} \leq \norm{f'}\IE\abs{1-\IE^W T_1},
\ee
and, if $\IE T_1 =1$, the last term is usually further bounded by
$\sqrt{\Var\IE^W T_1}$. It is
not difficult to show that $\norm{f'}\leq 2\norm{h}$ (where
$\norm{\cdot}$ denotes the supremum norm). Hence, we obtain the final
bound
\be
  \abs{\IE h(W)-\IE h(Z)} \leq 2\norm{h}\sqrt{\Var \IE^W T_1}.
\ee

Note that this specific form of the bound involving $\Var \IE^W T_1$ has been
implicitly used in the literature around Stein's method many times, but probably
first made explicit by \cite{Cacoullos1994}. In \cite{Chatterjee2009} and
\cite{Nourdin2009a} a connection with Poincar\'e
inequalities was made, where a bound on $\Var\IE^W T_1$ is called a
\emph{Poincar\'e inequality of second order}.

\subsection{An outline of our approach}\label{sec1}

Roughly speaking, we propose a general, probabilistic method of deriving
identities of the form \eq{6} and more refined versions of it, where we will
make use of both random variables $T_1$ and $T_2$ as we will show below. Based
on this, we provide theorems to obtain bounds for the accuracy of a normal
approximation of~$W$. 

The key idea is that of \emph{auxiliary randomisation}, introduced by
\cite{Stein1986}. To this end we construct a random variable $W'$ which we
imagine to be a `small perturbation' of~$W$. It is important to emphasize that
we make no assumptions about the distribution of $W'$ or the joint distribution
at this point (in particular no exchangeability is assumed a priori). We also
need to note that we never attempt to couple $W$ and $W'$ so that $W'=W$ almost
surely; in fact, such a coupling would contain no useful information for our
purposes. It is crucial that \emph{some} randomness remains (see \cite{Ross2008}
for a discussion on how to optimally choose the perturbation in some examples
using exchangeable pairs) and it will become clear that the difference $D:=W'-W$
contains essential information about~$W$. This can be seen as a
\emph{local-to-global} approach, where we deduce global properties of $W$ from
the behaviour of local perturbations, as these are often easier to handle.

When dealing with Stein's method, it becomes clear that we cannot expect normal
approximation results from any arbitrary coupling $(W,W')$, and we need to
impose some structure. To achieve this, we generalize an idea which goes back to
\cite{Stein1972}, and introduce a third random variable~$G$. For reasons that
will hopefully become apparent in the course of this article, we then make the
following key definition.

\begin{definition}\label{def} Let $(W,W',G)$ be a coupling of square integrable
random variables. We call $(W,W',G)$ a \emph{Stein coupling} if 
\ben                                                                \label{8}
    \IE\bklg{Gf(W') - Gf(W)} = \IE\bklg{Wf(W)}
\ee
for all functions for which the expectations exist. 
\end{definition}

Before explaining how this will help in finding an identity of the
form~\eq{6}, let us first discuss  some standard Stein couplings. As a simple
example where \eq{8} holds, assume that $(W,W')$ is an exchangeable
pair and assume that, for some $\lambda>0$ we have
\ben                                                                \label{9}
    \IE^W(W'-W) = -\lambda W.
\ee
If we set $G=\frac{1}{2\lambda}(W'-W)$ it is easy to see that \eq{8} is
satisfied (see Section~\ref{sec4} for more details).
Equation~\eq{9} is the well-known linear regression condition introduced by
Stein in \cite{Diaconis1977} and \cite{Stein1986} and \eq{8} can be seen as
a generalization of~it. We will show in Section~\ref{sec3} that~\eq{8} is the
key to normal approximation using Stein's method and that many approaches in the
literature in fact (implicitly) establish~\eq{8}. 

\begin{remark}\label{rem1}
Let $(W,W',G)$ be a Stein coupling. If we choose $f(x)= 1$, we see from
\eq{8} that~$\IE W = 0$. If we choose $f(x)=x$ we furthermore have that $\IE(GD)
= \Var W$.
\end{remark}
Note that the statement $\IE(GD) = \Var W$ is well known in a special case.
If $W'$ is an independent copy of $W$, then $(W,W',(W'-W)/2)$ is a
Stein coupling (use the exchangeable pairs approach above with $\lambda=1$) and,
hence,
\be
  \Var W = \ahalf\IE(W-W')^2 = \IE(GD)
\ee
is the well known way to express the variance of $W$ in terms of two independent
copies. However, this coupling is not useful for our purpose as $\abs{W'-W}$ is
not `small'. This example also shows that a Stein coupling by itself does by no
means guarantee proximity to the normal distribution.

It is often not difficult to construct a `small perturbation' $W'$ of~$W$. Three
main techniques have been used in the literature: deletion,
replacement and
duplication (note, however, that this is often done implicitly and not
expressed in terms of couplings).  In many typical situations, $W$ is a
functional of a family of random variables $X_1,\dots,X_n$ and $W'$ can be
constructed by picking a random index $I$ independently of the $X_i$ and then by
perturbing at position~$I$, either by removing $X_I$, replacing it, or by adding
another, related random variable. If the $X_i$ are not independent, the
other random variables typically have to be `adjusted' appropriately. Let
us quickly illustrate the three techniques in the most simple situation, that of
a sum of independent random variables. Let $W = \sum_{i=1}^n X_i$, where the
$X_i$ are assumed to be independent, centred and such that $\Var X_i = 1/n$.
Let in what follows $I$ be uniformly distributed over $\{1,\dots,n\}$ and
independent of all else.

\smallskip

\paragraph{\it Deletion} Define $W' = W - X_I$, that is, remove $X_I$ from~$W$.
If we choose $G = - n X_I$, we have
\be
    \IE\bklg{G f(W')} = -\sum_{i=1}^n \IE\bklg{X_i f(W-X_i)} = 0
\ee
due to the independence assumption. Further,
\be
  -\IE\bklg{G f(W)} = \sum_{i=1}^n \IE\bklg{X_i f(W)} = \IE\bklg{Wf(W)},
\ee
so that, indeed, \eq{8} is satisfied. This construction is very powerful under
local dependence, but it can also be used in other contexts; see
Section~\ref{sec12}.

\smallskip

\paragraph{\it Replacement} Let $X_1',\dots,X_n'$ be independent copies of
the~$X_i$. Define $W' = W - X_I + X_I'$. Then, it is not difficult to see that
$(W,W')$ is an exchangeable pair and that \eq{9} holds with $\lambda=1/n$,
which corresponds to $G = \frac{n}{2}(X_I'-X_I)$; this implies~\eq{8}. The idea
of replacing (or re-sampling) is one of the most fruitful in
Stein's method and will often
lead to an exchangeable pair $(W,W')$ and can be applied in situations where the
functional is no longer a sum or where some weak global dependence
structure is present. It lends itself naturally if $W$ can be interpreted as the
state of a stationary Markov chain and $W'$ is a step ahead in the chain;
this observation was first made explicit by \cite{Rinott1997}. It is important
to note here that the choice of $G$ is by far not restricted to
be a multiple of $(W'-W)$, also if $(W,W')$ is exchangeable. This added
flexibility is one of the key observation in this article. Note also that the
$X_i'$ need not be copies of the $X_i$ and may as well have a different
distribution, so that $(W,W')$ need not be exchangeable. In the size-biasing
approach, indeed, the $X_i'$ will typically be
chosen to have the size-biased distribution of~$X_i$.

\smallskip

\paragraph{\it Duplication} To the best of our knowledge, this method has been
only used by \cite{Chen1998}. Let the $X_i'$ be as in the previous paragraph,
and let $W' = W + X_I'$ along with $G = n (X_I'-X_I)$. Due to symmetry we have
\be
  \IE\bklg{X_I'f(W')} = \IE\bklg{X_If(W')},
\ee
hence $\IE\bklg{Gf(W')} = 0$. Further, $\IE \bklg{X_I' f(W)} = 0$, 
so that $\IE\bklg{Gf(W)} = \IE\bklg{Wf(W)}$ and \eq{8} follows.

\smallskip

As can be readily seen from this example, there are typically many possible ways
to construct Stein couplings and it depends on the application which
perturbation will give optimal results. Typically, two of the three random
variables $W$, $W'$ and $G$ are easy to construct (usually $(W,W')$ or
$(W,G)$) and the challenge lies then in constructing the third random variable
to make the triple a Stein coupling. However, as we will show in
Section~\ref{sec3}, by making
abstract assumptions about the structure of $W$, many situations can be handled
by standard couplings, so that, in a concrete application, one often only needs
to concentrate on constructing the coupling satisfying some abstract conditions
rather than to find a Stein coupling from scratch---although the latter can
give interesting additional insight into the problem at hand and may also lead
to improved bounds. 

We need to emphasize at this point that our abstract theorems will also
hold if \eq{8} is not satisfied, so that---a priori---any coupling $(W,W',G)$
can be used. However, useful bounds can only be expected if \eq{8} holds at
least approximately and the accuracy at which \eq{8} holds enters explicitly
into our error bounds. This parallels the introduction of a remainder term in
Condition \eq{9} by \citet[Eq.~(1.7)]{Rinott1997}.

Let us now go back and show how a coupling $(W,W',G)$ helps in
obtaining a Stein identity of the form~\eq{6}. By the fundamental theorem of
calculus we have
\ben                                                    \label{11}
    f(W') - f(W) = \int_{0}^{D}f'(W+t)dt,
\ee
so that, multiplying \eq{11} by $G$ and taking expectation, we obtain
\ben                                                    \label{12}
    \IE\bklg{Gf(W')-Gf(W)}= \IE\bbbklg{G\int_{0}^{D}f'(W+t)dt }.
\ee
If $U$ is an independent random variable with uniform distribution on $[0,1]$
we can also write this as 
\ben                                                        \label{13}
  \IE\bklg{Gf(W')-Gf(W)}= \IE\bklg{GDf'(W+UD)}.                   
\ee
If $(W,W',G)$ is a Stein coupling, the left hand side of \eq{13}
equals to $\IE\bklg{Wf(W)}$ and, hence, \eq{6} is satisfied with $T_1=GD$
and~$T_2=UD$. Our generality comes at a cost: as is clear from \eq{6}, if $T_2$
is
non-trivial we can not easily condition $T_1$ on $W$ (or an appropriate larger
$\sigma$-algebra) as done in \eq{7}---an important step in the argument.
However, using a simple Taylor expansion, we will see how to circumvent this
problem. We remark that it is usually not necessary to condition $T_1$ exactly
on $W$---it is typically enough to condition on a
larger $\sigma$-algebra. However, some form of conditioning is usually necessary
(typically averaging over all possible `small parts' that can be perturbed).

We note at this point that there is an infinitesimal
version of the perturbation idea, introduced by \cite{Stein1995} and further
elaborated by \cite{Meckes2008}. It is applicable if the underlying random
variables are continuous. Starting with an identity of the form \eq{6} with
non-trivial $T_1(\eps)$ and $T_2(\eps)$ (in the original work by means of
classic exchangeable pairs) depending on some $\eps>0$, a preceding limit
argument $\eps\tozero$ yields an identity \eq{6} with non-trivial $T_1$ but~$T_2
= 0$.

Having a coupling $(W,W',G)$ at hand one is already in a position to apply our
theorems and obtain bounds of closeness to normality in the respective metric.
However, bounding $\Var\IE^W T_1 = \Var\IE^W(GD)$ is not always easy and
sometimes not optimal as one typically has to make use of (truncated) fourth
moments of the involved random variables. For this reason it is often
beneficial---and sometimes crucial---to introduce other auxiliary random
variables. 

The first extension is to replace conditioning on $W$ by conditioning on
another random variable $W''$, which is still assumed to be close to $W$ but
typically independent of $GD$ (but not independent of $W$ and
$W'$!). In this case the main error term becomes $\Var^{W''}(GD)$
which of course vanishes if $W''$ is independent of~$GD$. Although this comes
at the cost of additional error terms, these are usually easier to bound. We
need to emphasize that the distribution of $W''$ is---a priori---irrelevant and
no corresponding equation of the form \eq{8} has to be satisfied for $W''$; the
only important feature is independence from $GD$ and
closeness to~$W$. In fact, we can show that, if $(W,W',G)$ is a Stein coupling
and $W''$
is independent of $GD$, then we can construct $T_3$ and $T_4$ such that
\be
  \IE\bklg{Wf(W)} = \IE f'(W) + \IE \bklg{T_3 f''(W+T_4)}
\ee
for all smooth enough functions~$f$. Compared to \eq{6}, this is clearly a step
further towards~\eq{1}. Typically, the specific dependence structure in $W$
used to construct $W''$ independently of $GD$ can also be exploited to
calculate bounds on $\Var\IE^W(GD)$---so why introducing $W''$ in the first
place? The crucial advantage is that the dependence structure can be
exploited in a more direct way (that is, in an earlier stage of the proof),
avoiding forth moments---and constructing $W''$ is often easier
than bounding~$\Var\IE^W(GD)$.

Other improvements can be made in specific applications by replacing $D$ by a
random variable $\~D$ such that $\IE^W(G\~D) = \IE^W(GD)$ (note that we use the
same letter only for convenience; $\~D$ itself does not have to be the
difference of two random variables) and replacing $1$ in \eq{7} by a random
variable $S$ such that $\IE^W S = 1$. Smart choices of $\~D$ and $S$ may allow
us
to construct $W''$ to be closer to $W$ in order to improve or simplify the error
bounds. As we will see in Section~\ref{sec7} about decomposable random
variables, the use of $\~D$ can be crucial. However,
at first reading one may always assume that $W'' = W$, $\~D = D$ and~$S=1$.

We do not claim that \emph{all} results that have been obtained using Stein's
method for normal approximation can be represented in terms of these auxiliary
random variables; but we provide evidence that we can cover a large part
of them. It may be possible to setup an even more general framework by
introducing other auxiliary random variables so that even very specialised
results such as \cite{Sunklodas2008}, where very fine and delicate calculations
are necessary to obtain optimal rates, could be represented in such a framework.
We did not attempt to do so in order to keep our theorems manageable. 

As mentioned after \eq{6}, zero-biasing couplings, as introduced in
\cite{Goldstein1997}, are the special case of \eq{6} where~$T_1=1$. It
is thus not surprising, that we are not able to directly represent a
zero-bias coupling as a Stein coupling. However, we will show that each
coupling $(W,W',G)$ satisfying \eq{8} gives rise to a zero-bias construction
(but, unfortunately, the construction does not directly lead to a coupling
with~$W$). Based on an exchangeable pair, \cite{Goldstein2005a} propose a way to
construct the zero bias~$W^z$. We adapt this construction to our more general
setting. As such, the zero-bias approach \emph{parallels} our approach rather
than being
a special case of it. Therefore, in this article, we take a different point of
view than, for example, \cite{Goldstein1997} and \cite{Goldstein2005}, where
size and zero biasing are seen to be closely related (both are distributional
transformations). From our perspective, size biasing is closer related to
approaches such as local approach and exchangeable pairs approach and we think
of zero biasing as being separate from these.

The rest of the article is organized as follows. In the remainder of the
introduction we will introduce the metrics of interest. In Section~2 we will
present the main theorems of the article and discuss the crucial error terms. In
Section~3 we will show how known approaches fit into our framework and also
present and discuss new couplings. Section~4 is dedicated to some
applications in order to see different couplings in action.
In Section~5 we will make the connection with the zero-bias approach and in
Section~6 we will prove the main results from Section 2.

\subsection{The probability metrics}
For probability distribution functions $P$ and $Q$ define 
\be
    \dw(P,Q) = \int_{-\infty}^\infty \abs{P(x)-Q(x)}dx,
    \qquad
    \dk(P,Q) = \sup_{x\in\IR}\abs{P(x)-Q(x)}.
\ee
The first quantity is known as $L_1$,  \emph{Wasserstein} or \emph{Kantorovich}
metric and is only a metric on the set of probability distributions with finite
first moment. If $X\sim P$ and $Y\sim Q$
have finite first moments and if $\%F_{\mathrm{W}}$ is the set of Lipschitz
continuous functions on $\IR$ with Lipschitz constant at most $1$, we have 
\be
    \dw(P,Q) = \sup_{h\in\%F_{\mathrm{W}}}\babs{\IE h(X) - \IE h(Y)},
\ee
where the infimum ranges over all possible couplings of $X$ and~$Y$. The second
metric is known as \emph{Kolmogorov} or \emph{uniform metric} and if
$\%F_{\mathrm{K}}$ denotes the set of half line indicators we obviously have
\be
    \dk(P,Q) = \sup_{h\in\%F_{\mathrm{K}}}\babs{\IE h(X) - \IE h(Y)}.
\ee
If $\phi$ is a right continuous function on $\IR$ such that for each $\eps$ we
have
\be
    Q(B^{\eps})\leq Q(B)+\phi(\eps)
\ee
for all Borel set $B\subset\IR$, where $B^\eps=\{x\,:\,\inf_{y\in
B}\abs{x-y}\leq \eps\}$, then
\be
    \dk(P,Q) \leq \sqrt{\dw(P,Q)+\phi(\dw(P,Q))}
\ee
(see \cite{Gibbs2002} for a compilation of this and similar results). If
$Q=\N(0,1)$ is the standard normal measure we can take $\phi(x)=x\sqrt{2/\pi}$,
thus
\ben                                                        \label{14}
    \dk(P,\N(0,1)) \leq 1.35\sqrt{\dw(P,\N(0,1))}.
\ee
However, as is well known, this bound is often not optimal and, in fact, in many
situations both metrics will exhibit the same rates of convergence.

\section{Main results} \label{sec2}

Throughout this section, let all random variables be at least square integrable
and defined on the same probability space without making any further assumptions
unless explicitly stated. Again, we point out that at first reading one
may set $W'' = W$, $\~D = D$ and~$S=1$. It may also be helpful to keep in
mind the introductory couplings for sums of independent random variables
in Section~\ref{sec1}. Before stating our main theorems,
we first define and discuss some error terms in order to express the overall
error bounds in the different metrics and using different techniques. Let
\ben                                                        \label{15}
    r_0 = \sup_{\norm{f},\norm{f'}\leq 1}{\babs{\IE\bklg{Gf(W')-Gf(W)-
	Wf(W)}}},
\ee
where the supremum is meant to be taken over all function $f$ which are bounded
by $1$ and Lipschitz continuous with Lipschitz constant at most~$1$. We need to
point out that in the proofs the actual supremum is only taken
over the solutions to the Stein equation so that in cases where more
properties of $f$ are needed (such as better constants) this can easily be
accomplished. Clearly, if
$(W,W',G)$ is a Stein coupling, then~$r_0=0$. Hence, for the couplings and
examples discussed in this article the actual set of functions over which the
supremum is taken is not relevant. In cases where the
linearity condition is not exactly satisfied (see e.g.\ \cite{Rinott1997} and
\cite{Shao2005}) $r_0$ measures the corresponding error; \cite{Shao2005}
handles self-normalised sums where uniformly bounded derivatives
of $f$ are
needed to proof that (the implicitly used) $r_0$ is small. Let now
\be
    D:=W'-W,\qquad D':=W''-W,
\ee
let $\~D$ be a square integrable and let $S$ be an 
integrable random variable on the same probability space. Define
\be 
    r_1 = \IE\babs{\IE^W(GD-G\~D)},
    \quad
    r_2 = \IE\babs{\IE^W(1-S)},
    \quad
    r_3 = \IE\babs{\IE^{W''}(G\~D-S)}.
\ee
Clearly, $r_1$ is the error we make by replacing $D$ by $\~D$, $r_2$ the error
we make by replacing $1$ by $S$ and $r_3$ corresponds to the main error term as
discussed in the introduction. These error terms will appear irrespective of
the metric. Additional error terms which are specific to the metric of interest
will be defined in the respective sections. 

\subsection{Wasserstein distance}

Bounds for smooth test functions are typically easier to obtain as no
smoothness of $W$ is required. Let us first start with a very general theorem,
from which we will then deduce some simpler corollaries.

\begin{theorem} \label{thm1}
Let $W$, $W'$, $W''$, $G$ and $\~D$ be square integrable random variables and
let $S$ be an integrable random variable. Then 
\bes
   &\dw\bklr{\law(W),\N(0,1)} \\
   &\qquad \leq 2r_0 + 0.8 r_1+ 0.8r_2 + 0.8 r_3 + 1.6 r_4 + r_5 +  1.6 r_4'
         + 2 r_5',
\ee
where
\ba
    r_4 &= \IE\abs{GD\,\I[\abs{D}>1]},
    &r_4' &=\IE\abs{(G\~D-S)\I[\abs{D'}>1]}, \\
    r_5 &= \IE\abs{G(D^2\wedge 1)},
    &r_5' & = \IE\abs{(G\~D-S)(\abs{D'}\wedge 1)}.\\
\ee
\end{theorem}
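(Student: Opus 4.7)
The plan is to run Stein's method on the Stein equation~\eq{4}: for a $1$-Lipschitz test function $h$, the solution $f=f_h$ satisfies the classical bounds $\|f\|\leq 2$, $\|f'\|\leq\sqrt{2/\pi}<0.8$ and $\|f''\|\leq 2$, so it suffices to bound $\babs{\IE\bklg{f'(W)-Wf(W)}}$ by the claimed linear combination of the $r_i$. I would proceed through a chain of substitutions, each paid for by exactly one of the eight terms on the right-hand side.

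The first substitution replaces $\IE\bklg{Wf(W)}$ by $\IE\bklg{Gf(W')-Gf(W)}$. Rescaling $f$ by $\max(\|f\|,\|f'\|)\leq 2$ and appealing to the definition~\eq{15} of $r_0$ bounds the cost by $2r_0$. A fundamental-theorem expansion then isolates the leading term,
\be
  Gf(W')-Gf(W)=GDf'(W)+G\int_0^D\bklr{f'(W+t)-f'(W)}\,dt,
\ee
and the remainder is handled by a two-regime estimate: on $\{\abs{D}\leq 1\}$ the bound $\abs{f'(W+t)-f'(W)}\leq\|f''\|\abs{t}$ gives a contribution at most $\tsfrac12\|f''\|\IE\babs{G(D^2\wedge 1)}\leq r_5$, while on $\{\abs{D}>1\}$ the cruder bound $\babs{f(W')-f(W)-Df'(W)}\leq 2\|f'\|\abs{D}$ gives at most $2\|f'\|\IE\babs{GD\,\I[\abs{D}>1]}\leq 1.6\,r_4$.

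The leading piece $\IE\bklg{GDf'(W)}$ is then transformed in three further steps. Conditioning on $W$, the replacement $GD\to G\~D$ costs $\|f'\|r_1\leq 0.8\,r_1$. Writing $\IE\bklg{G\~Df'(W)}=\IE\bklg{Sf'(W)}+\IE\bklg{(G\~D-S)f'(W)}$, the first summand becomes $\IE f'(W)$, cancelling the corresponding term in the Stein equation, up to the error $\|f'\|r_2\leq 0.8\,r_2$ obtained by conditioning on $W$. For the second summand I introduce $W''$ via
\be
  (G\~D-S)f'(W)=(G\~D-S)f'(W'')+(G\~D-S)\bklr{f'(W)-f'(W'')};
\ee
conditioning the first piece on $W''$ yields an error of $\|f'\|r_3\leq 0.8\,r_3$, while the second piece is again split by the size of $D'$: on $\{\abs{D'}\leq 1\}$ the bound $\babs{f'(W)-f'(W'')}\leq\|f''\|(\abs{D'}\wedge 1)$ gives $2\,r_5'$, and on $\{\abs{D'}>1\}$ the bound $\babs{f'(W)-f'(W'')}\leq 2\|f'\|$ gives $1.6\,r_4'$.

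The main technical obstacle is organisational rather than computational: each substitution must be arranged so that, at the moment the relevant $r_i$ is invoked, the other factor under the expectation is already measurable with respect to the conditioning variable ($W$ for $r_0, r_1, r_2$; $W''$ for $r_3$). In particular, the $D\to\~D\to S$ replacements have to be threaded through the two-regime Taylor argument rather than performed first, and the small/large perturbation split must use the second-order bound involving $\|f''\|$ on $\{\abs{D}\leq 1\}$ (respectively $\{\abs{D'}\leq 1\}$) and the first-order bound involving $\|f'\|$ on the complementary events in order to yield the correct numerical constants. Summing the eight contributions produced by the chain above gives exactly the stated bound.
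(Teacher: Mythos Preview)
Your proof is correct and follows essentially the same route as the paper: the paper packages your chain of substitutions into a single algebraic identity (Lemma~\ref{lem7} and display~\eq{82}), isolating the two remainders $R_1(f)=(G\~D-S)(f'(W'')-f'(W))$ and $R_2(f)=G\int_0^D(f'(W+t)-f'(W))\,dt$, and then bounds them by exactly your two-regime Taylor argument to obtain $\abs{\IE R_1}\leq 2\|f'\|r_4'+\|f''\|r_5'$ and $\abs{\IE R_2}\leq 2\|f'\|r_4+\tfrac12\|f''\|r_5$. Your sequential presentation and the paper's single identity are the same proof; the only difference is that the paper records the decomposition as a lemma so it can be reused for the Kolmogorov bounds.
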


Note that, here and in later results, the truncation constant~$1$ is not chosen
arbitrarily as the truncation at $1$ has some optimality properties; see
\cite{Loh1975} and \cite{Chen2001}. The difference $\abs{G\~D-S}$ in $r_4'$ and
$r_5'$ can usually be replaced by $\abs{G\~D}+\abs{S}$ without much loss of
precision.

Under additional, but not too strong assumptions we have the following.
\begin{corollary}\label{cor1}
Let $(W,W',G)$ be a Stein coupling with $\Var W = 1$. Then, under
finite fourth moments assumption,
\ben                                                            \label{16}
  \dw\bklr{\law(W),\N(0,1)}\leq 0.8\sqrt{\Var\IE^W(GD)}+\IE\abs{GD^2}.
\ee
\end{corollary}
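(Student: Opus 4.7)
The plan is to apply Theorem~\ref{thm1} with the simplest choices mentioned in the introduction: $W''=W$, $\~D=D$, and $S=1$. Since $(W,W',G)$ is a Stein coupling, $r_0=0$, and by construction $r_1=r_2=0$. Because $D'=W''-W\equiv 0$, both $r_4'$ and $r_5'$ vanish as well, leaving only $r_3$, $r_4$, and $r_5$ to control.

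For $r_3$, Remark~\ref{rem1} gives $\IE(GD)=\Var W=1$, so $\IE^W(GD)-1$ has mean zero, and Jensen's inequality yields
\be
    r_3=\IE\babs{\IE^W(GD-1)}\leq\sqrt{\Var\IE^W(GD)}.
\ee
For the truncation-type terms, the elementary inequalities $\abs{D}\,\I[\abs{D}>1]\leq D^2$ and $D^2\wedge 1\leq D^2$ give $r_4,r_5\leq\IE\abs{GD^2}$, which is finite under the fourth moment assumption via Cauchy--Schwarz, $\IE\abs{GD^2}\leq(\IE G^2)^{1/2}(\IE D^4)^{1/2}$.

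To obtain the sharp constant $1$ in front of $\IE\abs{GD^2}$ (rather than the looser $1.6\,r_4+r_5\leq 2.6\,\IE\abs{GD^2}$ that results from simply plugging into Theorem~\ref{thm1}), I would run the Stein-equation argument directly rather than through the general theorem: take $f=f_h$, the Stein solution for a $1$-Lipschitz test function $h$, second-order Taylor-expand $f(W')-f(W)=Df'(W)+R$ with $\abs{R}\leq\tfrac{1}{2}\supnorm{f''}D^2$, multiply by $G$, take expectation, and invoke the Stein coupling identity~\eq{8} to rewrite $\IE\{Gf(W')-Gf(W)\}$ as $\IE\{Wf(W)\}$. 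Combining with the Stein equation~\eq{4} gives
\be
    \IE h(W)-\IE h(Z)=\IE\bklg{\klr{1-\IE^W(GD)}f'(W)}-\IE(GR),
\ee
and the standard Wasserstein Stein factors $\supnorm{f'}\leq\sqrt{2/\pi}\leq 0.8$ and $\supnorm{f''}\leq 2$ then deliver the claimed inequality exactly. The main technical point is therefore not the argument itself but the realisation that the truncation splitting built into Theorem~\ref{thm1}, which is designed to remain meaningful without fourth moments, can be bypassed under the present hypothesis by a clean second-order expansion.
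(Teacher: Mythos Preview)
Your proposal is correct and matches the paper's own proof essentially line for line: the paper states that plugging $W''=W$, $\~D=D$, $S=1$ into Theorem~\ref{thm1} gives the result with larger constants, and that under finite fourth moments one obtains the sharper~\eq{16} by redoing the same argument without the truncation step. Your direct second-order Taylor expansion is exactly this ``redoing'', and your bookkeeping of the constants via $\norm{f'}\leq\sqrt{2/\pi}$ and $\norm{f''}\leq 2$ is the intended one.
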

\begin{proof} Theorem~\ref{thm1} with $S=1$, $\~D=D$ and $W''=W$
yields the result, but with bigger constants. However, with the
assumption of finite fourth moments no truncation is necessary, and one can
obtain the better result~\eq{16} with essentially the same proof as for
Theorem~\ref{thm1}. 
\end{proof}

\begin{corollary}\label{cor2}
Let $(W,W',G)$ be a Stein coupling with $\Var W = 1$ and assume that there
are $S$ and $\~D$ such that $\IE^W S = 1$, $\IE^W(G\~D)=\IE^W (GD)$ and $W''$
independent of $(G\~D,S)$.
Then, under finite third moments of $W$, $W'$, $G$ and~$\~D$ and
$\IE\abs{S}^{3/2}<\infty$,
\be
  \dw\bklr{\law(W),\N(0,1)}\leq \IE\abs{GD^2} + 2\IE\abs{G\~DD'} +
2\IE\abs{SD'}.
\ee
\end{corollary}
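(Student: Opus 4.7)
The plan is to revisit the derivation behind Theorem~\ref{thm1} under the stronger hypotheses of the corollary, replacing the first-order Taylor expansion (which forces the truncation terms $r_4, r_5, r_4', r_5'$) with a second-order one. Since third moments are assumed finite, no truncation will be needed and only second-derivative bounds on the solution $f=f_h$ of the Stein equation will appear.

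\textbf{Step 1 (setup).} For $h\in\%F_{\mathrm{W}}$ let $f=f_h$ solve \eq{4}. Standard bounds give $\norm{f'}\leq\sqrt{2/\pi}$ and $\norm{f''}\leq 2$. Because $(W,W',G)$ is a Stein coupling, $r_0=0$, so
\be
  \IE h(W)-\IE h(Z)=\IE f'(W)-\IE\klg{Gf(W')-Gf(W)}.
\ee

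\textbf{Step 2 (second-order expansion).} Writing
\be
  f(W')-f(W)=Df'(W)+\int_0^{D}(D-t)f''(W+t)\,dt
\ee
and multiplying by $G$, taking expectations gives
\be
  \IE\klg{Gf(W')-Gf(W)}=\IE\klg{GDf'(W)}+\IE\bbbklg{G\sint_0^{D}(D-t)f''(W+t)\,dt}.
\ee

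\textbf{Step 3 (replacing $D$ by $\~D$).} By conditioning on $W$ and using $\IE^W(GD)=\IE^W(G\~D)$ (i.e.\ $r_1=0$), the first term equals $\IE\klg{G\~D f'(W)}$. Thus, with $\IE f'(W)=\IE\klg{Sf'(W)}$ (using $\IE^W S=1$, i.e.\ $r_2=0$),
\be
   \IE Af(W)= -\IE\bklg{(G\~D-S)f'(W)}-\IE\bbbklg{G\sint_0^{D}(D-t)f''(W+t)\,dt}.
\ee

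\textbf{Step 4 (using $W''$).} Write $W=W''-D'$ and expand $f'(W)=f'(W'')-\int_0^{D'}f''(W+t)\,dt$. By Remark~\ref{rem1}, $\IE(GD)=\Var W=1$, so $\IE(G\~D)=\IE\IE^W(G\~D)=\IE(GD)=1$, and likewise $\IE S=\IE\IE^W S=1$. Independence of $W''$ from $(G\~D,S)$ then yields
\be
   \IE\bklg{(G\~D-S)f'(W'')}=\IE(G\~D-S)\cdot\IE f'(W'')=0,
\ee
hence
\be
   \IE\bklg{(G\~D-S)f'(W)}=-\IE\bbbklg{(G\~D-S)\sint_0^{D'}f''(W+t)\,dt}.
\ee

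\textbf{Step 5 (bounding).} Plugging back and using $\norm{f''}\leq 2$, together with $\int_0^{D}(D-t)\,dt=D^2/2$ and $|\int_0^{D'}f''(W+t)\,dt|\leq\norm{f''}\abs{D'}$,
\be
   \babs{\IE Af(W)}\leq \norm{f''}\bklg{\tsfrac12\IE\abs{GD^2}+\IE\abs{(G\~D-S)D'}}
   \leq \IE\abs{GD^2}+2\IE\abs{G\~D D'}+2\IE\abs{SD'},
\ee
which is the claim after taking the supremum over $h\in\%F_{\mathrm{W}}$.

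\textbf{Anticipated obstacle.} The argument is essentially a bookkeeping exercise; the only subtle point is justifying each substitution of conditional expectation and each use of Fubini, which is where the third-moment hypotheses on $W,W',G,\~D$ (and $\IE\abs S^{3/2}<\infty$) enter — they guarantee all expectations in sight are absolutely convergent by H\"older's inequality, and in particular that $\IE\abs{G\~D D'}$ and $\IE\abs{SD'}$ are finite. No other difficulty is expected.
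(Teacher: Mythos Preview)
Your proposal is correct and follows essentially the same route as the paper: it reconstructs the decomposition of Lemma~\ref{lem7} (equation~\eq{82}) by Taylor expansion, observes that the hypotheses force $r_0=r_1=r_2=r_3=0$, and then bounds the remainder terms $R_1(f)$ and $R_2(f)$ via $\norm{f''}\leq 2$ without any truncation, exactly as the proof sketch of Corollary~\ref{cor1} indicates one should do under sufficient moment assumptions.
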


Using H\"older's inequality, we obtain the following straightforward
simplification which gives the correct order in `typical' situations; we
assume~$S=1$.

\begin{corollary}\label{cor3} Let $(W,W',G)$ be a Stein coupling with $\Var W =
1$ and assume that there is
$\~D$ such that $\IE^W(G\~D)=\IE^W (GD)$ and $W''$
independent of~$G\~D$.
Then, if
\ben                                                           \label{17}
  \IE\abs{D}^3\vee \IE\abs{\~D}^3\vee \IE\abs{D'}^3 \leq A^3,\qquad
  \IE\abs{G}^3 \leq B^3,
\ee
for some positive constants $A$ and $B$, we have
\ben                                                            \label{18}
  \dw\bklr{\law(W),\N(0,1)}\leq 5A^2B.
\ee
\end{corollary}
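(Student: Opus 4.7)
The plan is to apply Corollary~\ref{cor2} with the simplest choice $S \equiv 1$ and then reduce everything to H\"older's inequality. The constant $S = 1$ trivially satisfies $\IE^W S = 1$, and since $W''$ is independent of $G\~D$ by hypothesis, it is automatically independent of the pair $(G\~D, S) = (G\~D, 1)$, so all hypotheses of Corollary~\ref{cor2} are in force. Its conclusion reduces to
\begin{equation*}
\dw\bklr{\law(W),\N(0,1)} \leq \IE\abs{GD^2} + 2\IE\abs{G\~D D'} + 2\IE\abs{D'}.
\end{equation*}

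I would then estimate each of the three terms by H\"older's inequality. Using exponents $(3,3/2)$, the first is at most $(\IE\abs{G}^3)^{1/3}(\IE\abs{D}^3)^{2/3} \leq A^2 B$; using the three-factor form with exponents $(3,3,3)$, the second is at most $(\IE\abs{G}^3)^{1/3}(\IE\abs{\~D}^3)^{1/3}(\IE\abs{D'}^3)^{1/3} \leq A^2 B$; and Jensen's inequality gives $\IE\abs{D'} \leq (\IE\abs{D'}^3)^{1/3} \leq A$ for the third.

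The main (and essentially only) obstacle is to reconcile the last estimate: a priori the $2A$ term need not be dominated by a multiple of $A^2 B$. Here the Stein coupling property enters decisively via Remark~\ref{rem1}, which gives $\IE(GD) = \Var W = 1$. A further application of H\"older and Jensen then yields
\begin{equation*}
1 = \IE(GD) \leq \IE\abs{GD} \leq (\IE\abs{G}^3)^{1/3}(\IE\abs{D}^{3/2})^{2/3} \leq AB,
\end{equation*}
so that $A \leq A^2 B$. Combining the three contributions gives $A^2 B + 2 A^2 B + 2 A \leq 5 A^2 B$, which is the claimed inequality~\eq{18}. The whole argument is mechanical once Corollary~\ref{cor2} is in hand; the only conceptual input beyond H\"older's inequality is the observation that a nontrivial Stein coupling with $\Var W = 1$ forces $AB \geq 1$, which is what makes the lone ``$A$'' term safe to absorb into the rest.
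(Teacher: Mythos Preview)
Your argument is correct and matches the paper's own proof essentially step for step: apply Corollary~\ref{cor2} with $S=1$, bound $\IE\abs{GD^2}$ and $\IE\abs{G\~D D'}$ by H\"older, and handle the lone $\IE\abs{D'}$ term via $1=\IE(GD)\leq\IE\abs{GD}\leq AB$. The paper phrases the last step as $\IE\abs{D'}\leq \IE\abs{GD}\,\IE\abs{D'}$ before applying H\"older, but the content is identical to your $A\leq A^2B$.
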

\begin{proof} The only part which may need explanation is that $\IE(GD) =1$
because $\Var W = 1$ by assumption and hence  $1\leq \IE\abs{GD}$. This yields
\be
  \IE\abs{D'}\leq \IE\abs{GD}\IE\abs{D'}.
\ee
The remaining part is due to H\"older's inequality.
\end{proof}

\subsection{Kolmogorov distance}

Let us now consider bounds with respect to the Kolmogorov metric. To
obtain such results, all approaches using Stein's method will
estimate probabilities of the form 
\ben                                                    \label{19}
    \IP[A\leq W \leq B \,\mid\, \%F]
\ee
at some stage of the proof, where $A$ and $B$ are $\%F$-measurable random
variables. This is in order to control the smoothness of $W$ as we are now
dealing with a non-smooth metric. Essentially two techniques can be found in the
literature to deal with such expressions. In the first approach, bounds on
\eq{19} are established by using similar techniques as used for Stein's method,
i.e.\ Identity~\eq{12} is applied for special functions $f$ to obtain an
explicit bound on \eq{19}. We call this the \emph{concentration inequality
approach}; see \cite{Ho1978}, \cite{Chen2005}, \cite{Shao2006a},
\cite{Chatterjee2006}. In the second approach, \eq{19} is bounded in an
indirect way in terms of $\dk\bklr{\law(W|\%F),\N(\mu_{\%F},\sigma_{\%F}^2)}$
where $\mu_{\%F}$ and $\sigma_{\%F}^2$ are the conditional expectation and
variance, respectively, of~$W$ (see Lemma~\ref{lem8} in Section~\ref{sec14})
which will lead to some form of recursive inequality; hence we will call this
the \emph{recursive approach}. Such inequalities are either solved directly,
i.e.\ if $\%F$ is the trivial $\sigma$-algebra (see \cite{Rinott1997} and
\cite{Raic2003}), or by using an inductive argument, making use of some
additional structure in $W$; this is typically the case if $\%F$ is a
non-trivial $\sigma$-algebra (see \cite{Bolthausen1984}). The recursive approach
has the advantage that an explicit bound of \eq{19} is not needed. This comes at
cost of more structure in the coupling. 

As has been observed by many authors, such as \cite{Rinott1997}, \cite{Raic2003}
or \cite{Chen2005} in the context of Stein's method, it is easier to obtain
Kolmogorov bounds under some
boundedness conditions, in which case the recursive approach can be easily
implemented, in fact, with $\%F$ equal to the trivial $\sigma$-algebra. Using a
truncation argument, boundedness can be relaxed, but in order to obtain useful
results one will need fast decaying tails of $G$, $D$, $\~D$ and~$D'$. We will
use mainly this approach for our applications; see also for example
\cite{Shao2006a} and \cite{Chatterjee2006}.

\begin{theorem} \label{thm2}
Let $W$, $W'$, $G$, $\~D$, $W''$ be square integrable random
variables and $S$ be an integrable random variable. Then, for any non-negative
constants $\alpha$, $\beta$, $\beta'$, $\~\beta$ and~$\gamma$
\bes
   &\dk\bklr{\law(W),\N(0,1)}\\
   &\enskip\leq 2\bklrl r_0+r_1 + r_2 + r_3 + r_6 + r_6'   
+(\alpha\~\beta+\gamma)(\IE\abs{W}+5)\beta'+(\IE\abs{W}+3)\alpha\beta^2\bklrr. 
\ee 
where
\ba    
r_6& = \IE\babs{GD\,
        \I[\text{$\abs{G}>\alpha$ or $\abs{D}>\beta$}]},\\
    r_6'& = \IE\babs{(G\~D-S)
        \I[\text{$\abs{G}>\alpha$ or $\abs{\~D}>\~\beta$ or
        $\abs{D'}>\beta'$ or $\abs{S}>\gamma$}]}.
\ee
\end{theorem}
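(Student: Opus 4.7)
The plan is to follow the classical Stein's method strategy for the Kolmogorov metric, where the non-smoothness of the Stein solution becomes the central difficulty. For each $z \in \IR$, let $f_z$ denote the bounded solution of the Stein equation $f'(w) - wf(w) = \I[w \leq z] - \Phi(z)$. Standard estimates yield $\|f_z\|_\infty \leq 1$, $\|f_z'\|_\infty \leq 1$, together with the sharper first-order bound
\[|f_z'(w+s) - f_z'(w)| \leq |s|(|w| + c_1) + \I[\min(w,w+s) \leq z \leq \max(w,w+s)]\]
for some absolute constant $c_1$. Since $\dk(\law(W),\N(0,1)) = \sup_z |\IE\{f_z'(W) - W f_z(W)\}|$, I would aim to bound this quantity uniformly in $z$ by the right-hand side of the theorem.

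The first step invokes the Stein-coupling error: because $\|f_z\|_\infty$ and $\|f_z'\|_\infty$ are at most $1$, the definition of $r_0$ gives $|\IE\{W f_z(W)\} - \IE\{G(f_z(W') - f_z(W))\}| \leq r_0$. Expanding the right side via the fundamental theorem of calculus as in \eq{12}--\eq{13} yields $\IE\{GD\, f_z'(W+UD)\}$ with $U$ an independent uniform $[0,1]$ random variable. I would then compare this to $\IE f_z'(W)$ by a telescoping sequence of substitutions: replace $GD$ by $G\~D$ using $\IE^W(GD - G\~D)$ and $\|f_z'\|_\infty \leq 1$ to pick up $r_1$; shift the argument from $W+UD$ to $W''$; replace $G\~D$ by $S$ using $\IE^{W''}(G\~D - S)$ to pick up $r_3$; and finally replace $S$ by $1$ using $\IE^W S = 1$ to pick up $r_2$. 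Each substitution uses only $\|f_z'\|_\infty \leq 1$ for its direct estimate.

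The remaining task is to control the differences $f_z'(W + s) - f_z'(W'')$ that survive the argument shifts. Here I would truncate over the events $\{|G| \leq \alpha, |D| \leq \beta\}$ and the analogous event for the primed quantities; the complementary contributions match exactly $r_6$ and $r_6'$. On the truncated events the key smoothness bound applies with small $s$, and its smooth part $|s|(|w| + c_1)$, multiplied against the truncated prefactors $|GD| \leq \alpha\beta$ or $|G\~D - S| \leq \alpha\~\beta + \gamma$, with $|s| \leq \beta$ or $|s| \leq \beta'$, yields precisely the contributions $(\IE|W|+c)\alpha\beta^2$ and $(\alpha\~\beta+\gamma)(\IE|W|+c)\beta'$ appearing in the statement.

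The main obstacle I anticipate is the indicator part of the smoothness bound: unlike the Wasserstein setting, $f_z'$ has a jump at $z$ and produces a term of the form $\IE[|GD|\,\I[|W - z| \leq \beta]]$, with a parallel contribution from the primed quantities. Because $\N(0,1)$ has density bounded by $1/\sqrt{2\pi}$, one has $\IP[|W - z| \leq \beta] \leq 2\beta/\sqrt{2\pi} + 2\dk(\law(W),\N(0,1))$, so these indicator contributions take the form $C\alpha\beta\cdot\dk + C'\alpha\beta^2$. The overall factor $2$ in front of the stated bound then arises by absorbing the $\dk$-proportional piece into the left-hand side of a self-bounding inequality. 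The explicit constants $5$ and $3$ would emerge from tracking $\sqrt{2\pi}/4$, $1$ and similar numerical factors for the Stein solution and for the Gaussian density through the telescoping and truncation steps; these I would verify as a careful but routine computation at the end.
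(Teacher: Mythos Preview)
Your overall strategy is close to the paper's, but there is a genuine gap in the self-bounding step that would make the argument fail exactly in the regime where the theorem is useful.

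You propose to use the unsmoothed Stein solution $f_z$ and handle the jump at $z$ via $\IP[\,|W-z|\leq\beta\,]\leq 2\beta/\sqrt{2\pi}+2\kappa$. On the truncated event the indicator contribution is then bounded by $\alpha\beta\bigl(2\beta/\sqrt{2\pi}+2\kappa\bigr)$ from the $GD$ part and by $(\alpha\~\beta+\gamma)\bigl(2\beta'/\sqrt{2\pi}+2\kappa\bigr)$ from the $(G\~D-S)$ part. The resulting recursive inequality therefore has the form
\[
  \kappa \leq (\text{remaining terms}) + \bigl(2\alpha\beta + 2(\alpha\~\beta+\gamma)\bigr)\,\kappa,
\]
and the coefficient in front of $\kappa$ is \emph{not} under your control. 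In the canonical situation described after the theorem ($\alpha\asymp n^{1/2}$, $\beta\asymp\~\beta\asymp\beta'\asymp n^{-1/2}$, $\gamma\asymp 1$) one has $\alpha\beta\asymp 1$ and $\alpha\~\beta+\gamma\asymp 1$, so this coefficient is of order~$1$ or larger and the inequality cannot be solved for~$\kappa$. The factor~$2$ in the statement is not coming from a universal coefficient~$1/2$ as you suggest.

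The paper avoids this by working not with the raw indicator but with the smoothed test function $h_{a,\eps}$ of \eq{84}, whose Stein solution $f_{a,\eps}$ satisfies \eq{86}: the jump is replaced by $\eps^{-1}\int_{s\wedge t}^{s\vee t}\I[a\leq w+u\leq a+\eps]\,du$. After applying Lemma~\ref{lem8} this produces a $\kappa$-term with coefficient $\bigl(2\alpha\beta^2+4(\alpha\~\beta+\gamma)\beta'\bigr)\eps^{-1}$, and one then \emph{chooses} $\eps=4\alpha\beta^2+8(\alpha\~\beta+\gamma)\beta'$ to force this coefficient to equal~$1/2$, at the price of the additive $0.4\eps$ from~\eq{90}. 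That freedom in $\eps$ is exactly what your argument is missing; once you introduce it, the telescoping and truncation you describe (essentially Lemma~\ref{lem7} and the splittings $J_1,\dots,J_4$ in the paper) go through and yield the stated constants.
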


If a sequence of couplings $(W_n,W'_n,G_n,W_n'',\~D_n,S_n)_{n\geq 1}$ is under
consideration, the truncation points $\alpha$, $\beta$, $\beta'$, $\~\beta$ and
$\gamma$ will of course
need to dependent on~$n$. In a typical situation, say a sum of $n$ bounded
i.i.d.\ random variables, we will have $\alpha\asymp n^{1/2}$,
$\beta\asymp\beta'\asymp\~\beta\asymp n^{-1/2}$ and $\gamma\asymp 1$. 

\begin{corollary}\label{cor4} Let $(W,W',G)$ be a Stein coupling with $\Var W =
1$. If $G$
and $D$ are bounded by positive constants $\alpha$ and $\beta$, respectively,
then
\be
  \dk\bklr{\law(W),\N(0,1)} \leq 2\sqrt{\Var \IE^W(GD)} + 8\alpha\beta^2
\ee
\end{corollary}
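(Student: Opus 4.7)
The plan is to derive this corollary as a direct specialisation of Theorem~\ref{thm2}, choosing the auxiliary random variables and truncation constants so that almost every error term vanishes and what remains matches the target bound.

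First I would set $S=1$, $\~D=D$, $W''=W$ (hence $D'=0$), and use the truncation constants $\alpha$ and $\beta$ from the hypothesis, together with $\~\beta=\beta$, $\gamma=1$, and $\beta'=0$. With these choices:
\begin{itemize}
\item $r_0=0$, since $(W,W',G)$ is a Stein coupling;
\item $r_1=0$, since $\~D=D$;
\item $r_2=0$, since $S=1$;
\item $r_6=0$, since $|G|\leq\alpha$ and $|D|\leq\beta$ almost surely;
\item $r_6'=0$, since in addition $|\~D|\leq\beta$, $|S|=1\leq\gamma$, and $D'\equiv 0$ so $\I[|D'|>\beta']=\I[|D'|>0]=0$.
\end{itemize}

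Next I would bound $r_3$. By Remark~\ref{rem1}, $\IE(GD)=\Var W=1$, so
\be
  r_3 = \IE\babs{\IE^W(GD-1)} = \IE\babs{\IE^W(GD)-\IE(GD)}
      \leq \sqrt{\Var\IE^W(GD)}
\ee
by Jensen's inequality (equivalently, $L^1\leq L^2$). Similarly, since $\IE W=0$ and $\Var W=1$, $\IE|W|\leq\sqrt{\Var W}=1$, so $\IE|W|+3\leq 4$.

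Finally, plugging into the bound of Theorem~\ref{thm2} and using $\beta'=0$ to kill the first bracketed term outside the sum, I obtain
\be
  \dk\bklr{\law(W),\N(0,1)} \leq 2\bklr{r_3 + (\IE|W|+3)\alpha\beta^2}
  \leq 2\sqrt{\Var\IE^W(GD)} + 8\alpha\beta^2,
\ee
which is the claimed inequality. There is no real obstacle here beyond choosing the auxiliary variables correctly: the corollary is essentially a bookkeeping consequence of Theorem~\ref{thm2} once one observes that the bounded setting allows $W''=W$ (so $D'=0$) without incurring any penalty, and that $\IE(GD)=1$ turns $r_3$ into the standard deviation of $\IE^W(GD)$.
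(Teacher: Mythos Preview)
Your proof is correct and is exactly the intended argument: the paper does not spell out a proof of this corollary, but it is meant to follow from Theorem~\ref{thm2} via precisely the specialisation you give ($W''=W$, $\~D=D$, $S=1$, $\beta'=0$), together with $\IE(GD)=\Var W=1$ from Remark~\ref{rem1} and $\IE|W|\leq 1$. There is nothing to add.
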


Note that, even if $G$ and $D$ are bounded, we unfortunately cannot deduce
a direct, useful bound on $\Var \IE^W(GD)$ from that fact. Instead, we need
again more structure in order to avoid $\Var \IE^W(GD)$.

\begin{corollary}\label{cor5} Let $(W,W',G)$ be a Stein coupling with $\Var W =
1$ and assume that there are $\~D$ such that $\IE^W(G\~D)=\IE^W (GD)$, $S$ such
that $\IE^W S = 1$ and $W''$ independent of~$(G\~D,S)$.
If the absolute values of $G$, $D$, $\~D$, $D'$ and $S$ are bounded by $\alpha$,
$\beta$, $\~\beta$, $\beta'$ and $\gamma$, respectively, then
\be
  \dk\bklr{\law(W),\N(0,1)} \leq 8\alpha\beta^2 + 12\alpha\~\beta\beta' 
    +12\gamma\beta'.
\ee 
\end{corollary}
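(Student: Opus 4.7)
The strategy is a direct application of Theorem~\ref{thm2} with the given boundedness constants~$\alpha,\beta,\beta',\~\beta,\gamma$ as truncation points, then verifying that every error term simplifies.

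First I would dispose of the error terms that vanish outright. Since $(W,W',G)$ is a Stein coupling we immediately get $r_0=0$. The assumption $\IE^W(G\~D)=\IE^W(GD)$ gives $r_1=0$, and $\IE^W S=1$ gives $r_2=0$. For $r_3$, the independence of $W''$ from $(G\~D,S)$ means
\be
  \IE^{W''}(G\~D-S)=\IE(G\~D)-\IE(S)=\IE(GD)-1,
\ee
and by Remark~\ref{rem1} together with $\Var W=1$ we have $\IE(GD)=\Var W=1$, so the right-hand side equals $0$ and hence $r_3=0$. Finally, since $|G|\leq\alpha$ and $|D|\leq\beta$ hold almost surely, the indicator event defining $r_6$ is empty, so $r_6=0$; likewise the indicator event defining $r_6'$ is empty in view of the bounds on $G,\~D,D',S$, so $r_6'=0$.

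Next I would bound the nontrivial residual terms. By Jensen's inequality $\IE|W|\leq(\Var W)^{1/2}=1$, so $\IE|W|+3\leq 4$ and $\IE|W|+5\leq 6$. Plugging into Theorem~\ref{thm2} we obtain
\be
  \dk\bklr{\law(W),\N(0,1)}\leq 2\bklr{6(\alpha\~\beta+\gamma)\beta'+4\alpha\beta^2}=8\alpha\beta^2+12\alpha\~\beta\beta'+12\gamma\beta',
\ee
which is exactly the stated inequality.

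There is no real obstacle here beyond bookkeeping; the only substantive step is recognising that the combined assumptions $\IE^W(G\~D)=\IE^W(GD)$, $\IE^W S=1$, $\Var W=1$, and independence of $W''$ from $(G\~D,S)$ together force the conditional expectation $\IE^{W''}(G\~D-S)$ to vanish identically, which is what makes $r_3=0$ without any further work. All other simplifications follow mechanically from the boundedness hypotheses and the definition of a Stein coupling.
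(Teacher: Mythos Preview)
Your proof is correct and is exactly the intended derivation: the paper does not write out a separate proof for this corollary, but it is meant to follow immediately from Theorem~\ref{thm2} by plugging in the given truncation constants, just as you have done. Every step of your bookkeeping is right, including the verification that $r_3=0$ via $\IE(GD)=\Var W=1$ and $\IE S=1$.
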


We need to emphasize the remarkable statement of Corollary~\ref{cor5}: under the
conditions stated we immediately obtain a bound on the Kolmogorov distance to
the standard normal without any additional computations! Examples can easily
found such as bounded, locally dependent random variables; see
Section~\ref{sec5}.

The approach we will use for the next theorem was developed by \cite{Chen2004a}
for locally dependent random variables. Although a concentration
inequality approach was used by \cite{Chen2004a}, the
recursive approach is easy to implement without loss of precision. Like in
Theorem~\ref{thm2}, the
aim is to obtain a bound involving \eq{19} with respect to the
unconditional~$W$.
This comes at the cost of truncated forth moments, especially in the form
of~$r_8$. Hence, the approach of avoiding truncated forth moments by making use
of $W''$ in $r_3$ will not be useful because of the presence of~$r_8$.
Therefore, we give below only a version for $W'' = W$, $\~D = D$ and $S=1$ to
avoid
unnecessary overloading of the bound. To define some additional error terms,
let
\bgn                                                            
   \^K(t) := G (\I[0 \leq t < D] - \I[D\leq t< 0]),             \label{20}\\
   K^W(t):=\IE^W\^K(t),\qquad K(t) := \IE\^K(t)\notag.
\ee

\begin{theorem} \label{thm3}
Let $W$, $W'$ and $G$ be square integrable random variables on the same
probability space. Then 
\bes
   \dk\bklr{\law(W),\N(0,1)}
    &\leq 2r_0 + 2\^r_3 + 2r_4 + 2(\IE\abs{W}+2.4)r_5 \\
    &\quad + 1.4r_7 + 2((\IE(\abs{W}+1)^2)^{1/2}+1.1)r_8
\ee
where $\^r_3 = \IE\babs{\IE^W(GD)-1}$, where $r_4$
and
$r_5$ are defined as in Theorem~\ref{thm1} and where
\be
    r_7 = \int_{\abs{t}\leq 1 }\Var K^W(t) dt,
    \qquad r_8 = \bbbklr{\int_{\abs{t}\leq 1 }\abs{t}\Var K^W(t) dt}^{1/2}.
\ee

\end{theorem}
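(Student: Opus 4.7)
The strategy is to apply Stein's method for the Kolmogorov metric: for $z \in \IR$, let $f = f_z$ be the bounded solution of the Stein equation $f'(x)-xf(x) = \I[x\leq z]-\Phi(z)$, which satisfies the standard estimates $\norm{f_z}_\infty \leq \sqrt{2\pi}/4$ and $\norm{f_z'}_\infty \leq 1$ together with the pointwise identity $f_z'(x) = xf_z(x) + \I[x\leq z] - \Phi(z)$. Starting from the Stein coupling identity (up to $r_0$-error) and rewriting the fundamental theorem of calculus via $\^K(t)$ as in~\eq{13}, one obtains
\be
  \IE Af_z(W) = \IE\bklg{f_z'(W)(1-GD)} + \IE\int_\IR\bklg{f_z'(W)-f_z'(W+t)}\^K(t)\,dt + \text{err},
\ee
with $\babs{\text{err}}$ controlled by $r_0$. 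The first bracket is bounded by $\^r_3$ via conditioning on $W$ and $\norm{f_z'}_\infty \leq 1$. Splitting the integral at $\abs{t}=1$, the tail $\abs{t}>1$ contributes at most $2r_4$, since $\^K(t)=0$ whenever $\abs{t}>\abs{D}$.

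For the range $\abs{t}\leq 1$, the plan is to apply the Stein equation once more to write
\be
  f_z'(W) - f_z'(W+t) = -tf_z(W+t) + W\bklr{f_z(W)-f_z(W+t)} + \bklr{\I[W\leq z]-\I[W+t\leq z]}.
\ee
The $-tf_z(W+t)$ piece contributes an $r_5$-type term via $\int_{\abs{t}\leq 1}\abs{t}\babs{\^K(t)}\,dt \leq \abs{G}(D^2\wedge 1)/2$ and $\norm{f_z}_\infty<1$. For the remaining two pieces, the first factor is $\sigma(W)$-measurable for fixed $t$, so $\^K(t)$ may be replaced by $K^W(t)$. Decomposing $K^W(t)=K(t)+(K^W(t)-K(t))$, the deterministic part yields contributions of the form $\IE\abs{W}\cdot r_5$, while the centred part is handled by Cauchy--Schwarz, introducing factors $\sqrt{\Var K^W(t)}$; integrating and applying a second Cauchy--Schwarz in $t$ converts $\int_{\abs{t}\leq 1}\abs{t}\sqrt{\Var K^W(t)}\,dt$ into $r_8$ (paired with a second moment of $W$ giving the $(\IE(\abs{W}+1)^2)^{1/2}$-factor).

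The crux is the indicator contribution $\IE\int\bklr{\I[W\leq z]-\I[W+t\leq z]} K^W(t)\,dt$, since $f_z$ is not smooth across $z$. Here I would bound small-interval probabilities via $\IP(z-t<W\leq z)\leq \abs{t}/\sqrt{2\pi} + 2\dk\bklr{\law(W),\N(0,1)}$ and insert this estimate into the Cauchy--Schwarz step above. The resulting mixed term $\sqrt{\dk\cdot r_7}$, combined with $\sqrt{ab}\leq (a+b)/2$, lets us absorb the $\dk$-contribution on the left-hand side and produces a \emph{linear} $r_7$-term in the final bound---this is the mechanism that makes $r_7$ (rather than $\sqrt{r_7}$) appear, with coefficient $1.4$. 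The main obstacle is the bookkeeping: tracking the precise norm bounds on $f_z$ and how moments of $W$ propagate through the estimates (in particular via $\norm{W[f_z(W)-f_z(W+t)]}_2 \leq \abs{t}\,\norm{W}_2$) to obtain the exact coefficients $2(\IE\abs{W}+2.4)$ on $r_5$ and $2((\IE(\abs{W}+1)^2)^{1/2}+1.1)$ on $r_8$ as stated.
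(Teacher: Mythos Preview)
Your overall architecture---rewrite via $\^K(t)$, split at $\abs{t}=1$, decompose $K^W(t)=K(t)+(K^W(t)-K(t))$, and close a recursive inequality in $\kappa:=\dk(\law(W),\N(0,1))$ using $\IP[a\leq W\leq a+s]\leq s/\sqrt{2\pi}+2\kappa$---is exactly the paper's. The pointwise identity $f_z'(W)-f_z'(W+t)=-tf_z(W+t)+W(f_z(W)-f_z(W+t))+(\I[W\leq z]-\I[W+t\leq z])$ is also fine; it is the same information as the paper's bounds~\eq{86}--\eq{87}. But there is a genuine gap in how you handle the indicator piece against the \emph{deterministic} part~$K(t)$.

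With unsmoothed half-line indicators, the $K(t)$-part of the indicator contribution is
\[
\int_{\abs{t}\leq1}\!K(t)\bigl(\IP[W\leq z]-\IP[W\leq z-t]\bigr)dt,
\]
and bounding $\abs{\IP[W\leq z]-\IP[W\leq z-t]}\leq 0.4\abs{t}+2\kappa$ produces a term $2\kappa\int_{\abs{t}\leq1}\abs{K(t)}\,dt$. Since $\int_{\abs{t}\leq1}\abs{K(t)}\,dt\leq\IE\,\abs{G}(\abs{D}\wedge1)$ is of order $\IE\abs{GD}\geq\abs{\IE(GD)}=1$, you obtain $\kappa\leq(\text{small})+c\kappa$ with $c\geq2$, and the recursion cannot be solved. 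Your mixed term $\sqrt{\kappa\,r_7}$ comes only from the centred piece $K^W(t)-K(t)$ via Cauchy--Schwarz; it does not account for this $K(t)$-contribution.

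The paper avoids this by working with the \emph{smoothed} test functions $h_{a,\eps}$ of~\eq{84}. The crucial gain is in~\eq{86}: the raw jump $\I[z-t<W\leq z]$ is replaced by $\eps^{-1}\int_{0}^{t}\I[a\leq W+u\leq a+\eps]\,du$, which after taking expectation carries an extra factor~$\abs{t}$. The $K(t)$-indicator pairing therefore becomes $2\eps^{-1}\kappa\int_{\abs{t}\leq1}\abs{t}\abs{K(t)}\,dt\leq 2\eps^{-1}\kappa\,r_5$, now small. Together with an analogous $\eps^{-1}\kappa\,r_7$ from the centred part (obtained in the paper by a pointwise AM--GM~\eq{92} with $t$-dependent weight rather than Cauchy--Schwarz), these are balanced against the smoothing cost $0.4\eps$ from~\eq{90}; choosing $\eps\asymp\sqrt{\kappa(2r_5+r_7)}$ converts every $\kappa$-term into $\sqrt{\kappa\cdot(\text{small})}$, and a final AM--GM closes the recursion with coefficient $1/2$ on~$\kappa$. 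So the missing ingredient in your plan is precisely this smoothing parameter~$\eps$ and its optimisation.
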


Let us discuss $\Var K^W(t)$. Typically, $W$ will consist of $n$
parts, such as a sum of $n$ random variables or a functional in $n$ coordinates.
In this case, the perturbation $W'$ will typically be constructed by picking a
small part of $W$, say part $I$, where $I$ is uniform on $\{1,\dots,n\}$ and
then perturb this part. Thus, we typically will have $G:=nY_I$, $D := D_I$ for
sequences $Y_1,\dots,Y_n$ and $D_1,\dots,D_n$ and then define $W':= W+D_I$,
where, with $\sigma^2 = \Var W$, $\IE\abs{Y_i}=\bigo(\sigma^{-1/2})$ and
$\IE\abs{D_i}=\bigo(\sigma^{-1/2})$.
Let now $(W^*,W^{*\prime},G^*)$ be an independent copy of $(W,W',G)$ and
let $\I^\pm_t(x) = \I[0\leq t<x] - \I[x\leq t < 0]$. Then we can write
\bes
    \Var K^W(t) &\leq 
    \sum_{i,j=1}^n \Cov(Y_i\I^\pm_t(D_i),Y_j\I^\pm_t(D_j))\\    
    & = \sum_{i,j=1}^n \IE\bklg{Y_iY_j\I^\pm_t(D_i)\I^\pm_t(D_j)
    -Y_iY^*_j\I^\pm_t(D_i)\I^\pm_t(D^*_j)}
\ee
so that 
\bes
    r_7 & \leq \sum_{i,j=1}^n
        \IE\bklg{Y_iY_j\I[D_iD_j>0](\abs{D_i}\wedge\abs{D_j}\wedge1)\\   
    &\qquad\qquad\qquad
    -Y_iY^*_j\I[D_iD^*_j>0](\abs{D_i}\wedge\abs{D^*_j}\wedge1)}
\ee
and
\bes
    r_8^2 & \leq \frac{1}{2}\sum_{i,j=1}^n
        \IE\bklg{Y_iY_j\I[D_iD_j>0](\abs{D_i}^2\wedge\abs{D_j}^2\wedge1)\\   
    &\qquad\qquad\qquad
    -Y_iY^*_j\I[D_iD^*_j>0](\abs{D_i}^2\wedge\abs{D^*_j}^2\wedge1)},
\ee
respectively. In the case of local dependence, these quantities can now be
bounded relatively easily; see \cite{Chen2004a}.

If truncated fourth moments are to be avoided and no boundedness can
be assumed, it seems that more structure is needed in the coupling. A typical
instance is the
use of higher-order neighbourhoods under local dependence as in
\cite{Chen2004a}, or the recursive structure in the combinatorial CLT in
\cite{Bolthausen1984}. The theorem below is the basis for such results and it
contains expressions of the form \eq{19} explicitly, so that further steps are
needed for a final bound. 

Define for a random element $X$ defined on the same probability space as~$W$   
the quantity 
\be
    \vartheta_\eps(X) = \sup_{a\in\IR}\IP[a\leq W \leq a+\eps\,|\,X],
\ee
where we assume without further mentioning that the regular conditional
probability exists. 

\begin{theorem} \label{thm4}
Let $W$, $W'$, $W''$, $\~D$ and $G$ be random variables with finite third
moments and $S$ be a random variable with $\IE\abs{S}^{3/2}<\infty$. Then, for
any $\eps>0$,
\besn                                                          \label{21}
    &\dk\bklr{\law(W),\N(0,1)} \\
    &\quad\leq r_0 + r_1 + r_2 + r_3 + r_9 + 0.5 r_{10}  + \eps^{-1}
r_{11}(\eps) + 0.5\eps^{-1}r_{12}(\eps)+ 0.4\eps
\ee
where
\ba  
r_9 &=\IE\babs{(S-G\~D)(\abs{W}+1)(\abs{D'}\wedge 1)}
&r_{10} &=\IE\abs{G(\abs{W}+1)(D^2\wedge 1)}\\
r_{11}(\eps) &=\IE\babs{(S-G\~D)D' \vartheta_\eps(G,\~D,D',S)}
&r_{12}(\eps) &=\IE\babs{GD^2 \vartheta_\eps(G,D)}
\ee
\end{theorem}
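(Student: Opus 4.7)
The approach is to reduce the Kolmogorov bound to a smooth-function bound by smoothing $\I[\cdot\leq z]$ at scale $\eps$, apply the Stein coupling identity together with a second-order Taylor expansion, and then decompose the Stein solution's second derivative $f''$ into a smooth piece (growing at most linearly in $|x|$) and a rough piece (an indicator of a width-$\eps$ neighbourhood of $z$). The smooth piece will generate $r_9$ and $r_{10}$, the rough piece will generate the $\vartheta_\eps$-based terms $r_{11}$ and $r_{12}$ after conditioning on the relevant auxiliary variables, and the smoothing cost contributes the $0.4\eps$ term.

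Concretely, for $z\in\IR$ I would let $h_z^\eps$ be the piecewise-linear $\eps^{-1}$-Lipschitz approximation of $\I[\cdot\leq z]$, so that the smoothing error is bounded by $\eps/\sqrt{2\pi}<0.4\eps$, and let $f$ solve the Stein equation for $h_z^\eps-\IE h_z^\eps(Z)$. Standard estimates give $\norm{f}\leq\sqrt{\pi/2}$, $\norm{f'}\leq 2$, and via the identity $f''=f+xf'+(h_z^\eps)'$ the key refined bound $|f''(x)| \leq C(1+|x|)+\eps^{-1}\I[x\in(z,z+\eps)]$. Using definition~\eqref{15} to replace $\IE\{Wf(W)\}$ by $\IE\{Gf(W')-Gf(W)\}$ at cost $r_0$, the identity $f(W')-f(W)=Df'(W)+\int_0^D(D-s)f''(W+s)\,ds$ expresses $\IE Af(W)$ as
\be
 \IE\{(1-GD)f'(W)\} \;-\; \IE G\int_0^D(D-s)f''(W+s)\,ds
\ee
up to an $r_0$-error.

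For the first summand, the three-way split $\IE\{(1-GD)f'(W)\}=\IE\{(1-S)f'(W)\}+\IE\{(G\~D-GD)f'(W)\}+\IE\{(S-G\~D)f'(W)\}$ yields $r_1+r_2$ from the outer terms (using $\norm{f'}\leq 2$) and leaves the middle term, which I would split further as $\IE\{(S-G\~D)f'(W'')\}+\IE\{(S-G\~D)(f'(W)-f'(W''))\}$. The former gives $r_3$ by conditioning on $W''$ and using $\norm{f'}\leq 2$. For the latter, writing $f'(W)-f'(W'')$ as an integral of $f''$ between $W''$ and $W$ and applying the decomposition of $|f''|$, the smooth piece contributes $r_9$ (the $(|W|+1)(|D'|\wedge 1)$ structure arising from splitting $|D'|\leq 1$, where the linear-in-$|x|$ Taylor bound is used, against $|D'|>1$, where the crude $\norm{f'}\leq 2$ is used), and the rough piece, conditioned on $(G,\~D,D',S)$, contributes $\eps^{-1}r_{11}(\eps)$ since the Lebesgue measure of the set of $u\in[0,|D'|]$ for which $W''+u\cdot\mathrm{sgn}(D')$ falls in $(z,z+\eps)$ is conditionally at most $|D'|\,\vartheta_\eps(G,\~D,D',S)$.

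The second summand $\IE G\int_0^D(D-s)f''(W+s)\,ds$ is handled by exactly the same smooth/rough split: via $\int_0^{|D|}(|D|-s)(1+|W|+s)\,ds\leq (1+|W|)D^2/2$ under the truncation $|D|\leq 1$, the smooth piece gives $\tfrac12 r_{10}$; and via $\int_0^{|D|}(|D|-s)\I[W+s\in(z,z+\eps)]\,ds\leq (D^2/2)\vartheta_\eps(G,D)$ after conditioning on $(G,D)$, the rough piece gives $\tfrac12\eps^{-1}r_{12}(\eps)$. A symmetric argument for the lower-tail smoothing produces the same constants, and taking the supremum over $z$ gives the stated bound. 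The main technical obstacle will be the truncation bookkeeping that converts the raw Taylor integrals into the clean $|D'|\wedge 1$ and $D^2\wedge 1$ factors without spawning extra error terms; this refined handling of $f''$---rather than the crude $\norm{f''}\leq 2\eps^{-1}$---is precisely what allows the bound to avoid truncated fourth moments in exchange for the milder $\vartheta_\eps$-based quantities.
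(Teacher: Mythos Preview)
Your approach is correct and essentially identical to the paper's. The paper packages the argument through Lemma~\ref{lem7} and the increment bound~\eqref{86} for $f_{a,\eps}'$, whereas you phrase it via $f'' = f + xf' + h'$ and a pointwise bound $\abs{f''(x)}\leq (1+\abs{x})+\eps^{-1}\I[a<x<a+\eps]$; integrating your bound recovers~\eqref{86} exactly, and your Taylor remainder $G\int_0^D(D-s)f''(W+s)\,ds$ is just $R_2(f)$ rewritten by Fubini. Two small slips to fix: the sharper bound is $\norm{f'}\leq 1$ (not $2$), which is what produces the unit coefficients on $r_0,\dots,r_3$; and in your three-way split it is the \emph{first two} terms $(1-S)$ and $(G\~D-GD)$ that yield $r_2$ and $r_1$, leaving the \emph{third} term $(S-G\~D)f'(W)$ to be split via $W''$, not the ``middle'' one.
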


We now look at a method to obtain a final bound from the above theorem using
induction. Although never mentioned in the literature around Stein's method,
this type of argument can be traced back to \cite{Bergstrom1944}, who
uses Lindeberg's method and an inductive argument to prove a Kolmogorov bound in
the CLT. The argument was used later by \cite{Bolthausen1982a} in the context of
martingale central limit theorems. The following Lemma~\ref{lem1} provides the
key element to the
inductive approach in the
context of Stein's method as introduced by \cite{Bolthausen1984}. It can be used
to obtain a final bound from an estimate of the form \eq{21}, provided that $W =
W_n$ has some recursive structure and $S_\eps$ can be expressed in terms of the
closeness of $W_1,W_2,\dots,W_{n-1}$ to the standard normal distribution. Note
that in the following lemma, the numbers $\kappa_k$, $k=1,\dots,n$ denote the
respective bounds on the Kolmogorov distance between $W_k$ and the standard
normal. Whereas \cite{Bolthausen1984} uses a recursion involving
$\kappa_{n-4},\dots,\kappa_{n-1}$, \cite{Goldstein2010} introduces a version
involving all possible $\kappa_1,\dots,\kappa_{n-1}$ to prove Berry-Esseen type
bounds for degree counts in the \ER\ random graph using size biasing.
Incidentally, already \cite{Bergstrom1944} uses $\kappa_1,\dots,\kappa_{n-1}$
for his inductive argument, although his argument is of a somewhat different
flavour. The following lemma is inspired by the work of \cite{Goldstein2010},
but
adapted to be used along with Theorem~\ref{thm4}. An independent proof will be
given in Section~\ref{sec14}.

\begin{lemma}\label{lem1} Let $\kappa_1,\dots,\kappa_n$, be a sequence of
non-negative numbers such that $\kappa_1 \leq 1$. Assume that there is a
constant $A\geq 0$, a triangular array $A_{k,1},\dots,A_{k,k}\geq 0$,
$k=2,3,\dots,n$, and a sequence $\sigma_2,\dots,\sigma_n> 0$ such that, for
all $\eps>0$ and all $2\leq k\leq n$,
\ben                                                            \label{22}
    \kappa_k \leq \frac{A}{\sigma_k} + 0.4\eps +
    \frac{1}{\eps\sigma_k}
    \sum_{l=1}^{k-1} A_{k,l} \kappa_l.
\ee
Then,
\be
\kappa_n\leq \frac{1}{\sigma_n}\frac{\bklr{5(A\vee 1)+2\alpha_n+
\alpha_n'
}\bklr{2\alpha_n+\alpha_n'}
}{5\alpha_n'},
\ee
where
\be
    \alpha_n = \sup_{2\leq k\leq
n}\sum_{l=1}^{k-1}\frac{\sigma_k }{\sigma_l}A_{k,l},
\qquad \alpha_n' = \sqrt{2\alpha_n(2\alpha_n+5(A\vee1))}.
\ee
\end{lemma}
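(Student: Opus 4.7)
The plan is to prove, by induction on $k$, a bound of the form $\kappa_k \le C/\sigma_k$ with the \emph{same} constant $C$ for every $k\in\{1,\dots,n\}$, and then to specialise to $k=n$. Writing $M = A \vee 1$, the right constant is
\[
    C := M + 0.8\alpha_n + \sqrt{1.6\,M\alpha_n + 0.64\alpha_n^2}.
\]
A direct computation using the defining identity $(\alpha_n')^2 = 2\alpha_n(2\alpha_n + 5M)$ shows that this equals both $(2\alpha_n+\alpha_n')^2/(10\alpha_n)$ and $(5M+2\alpha_n+\alpha_n')(2\alpha_n+\alpha_n')/(5\alpha_n')$, the latter being the constant appearing on the right of the statement. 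Since $\sigma_1$ is unspecified but $\alpha_n$ involves $1/\sigma_l$ for $l\ge 1$, I set $\sigma_1:=1$ by convention; the base case $k=1$ is then $\kappa_1\le 1\le C$, valid because $C\ge M\ge 1$.

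For the inductive step, assume $\kappa_l \le C/\sigma_l$ for all $l<k$. Substituting into the recursion \eqref{22} and using the definition of $\alpha_n$ gives, for every $\eps>0$,
\[
    \sigma_k\kappa_k \le A + 0.4\eps\sigma_k + \frac{C}{\eps\sigma_k}\sum_{l=1}^{k-1}\frac{\sigma_k A_{k,l}}{\sigma_l} \le A + 0.4\eps\sigma_k + \frac{C\alpha_n}{\eps\sigma_k}.
\]
Because this holds for every $\eps>0$, I minimise the right-hand side by taking $\eps\sigma_k = \sqrt{C\alpha_n/0.4}$; the two $\eps$-dependent terms then contribute equally, and AM--GM yields $\sigma_k\kappa_k \le A + 2\sqrt{0.4\,C\alpha_n}$. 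It remains to check that this is bounded above by $C$; using $A\le M$ and $C\ge M$, this reduces to the inequality $(C-M)^2 \ge 1.6\,C\alpha_n$, which, after expansion of the closed form of $C$, is in fact an \emph{equality}. Thus $C$ is the smallest admissible constant, the induction closes, and we obtain $\kappa_n\le C/\sigma_n$ as claimed.

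The main obstacle is conceptual rather than computational: guessing the correct shape $\kappa_k\le C/\sigma_k$ with $C$ independent of $k$, and then recognising that the induction closes precisely when $C$ solves the quadratic $(C-M)^2=1.6\,C\alpha_n$. The one delicate point is the handling of $k=1$: the hypothesis is $\kappa_1\le 1$ (not $\kappa_1\le 1/\sigma_1$), which is exactly what forces $C\ge A\vee 1$ and so explains the appearance of $A\vee 1$ rather than $A$ in the final bound.
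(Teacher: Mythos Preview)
Your proof is correct and rests on the same core idea as the paper's: set $\sigma_1:=1$, work with $a_k:=\sigma_k\kappa_k$, and close an induction on $k$ by choosing $\eps$ suitably. The paper's organisation is a bit less direct: it first fixes $\eps=c_n\alpha_n/\sigma_k$ with a free parameter $c_n>1$, then invokes an auxiliary comparison lemma (bounding the solution of $a_k\le q+\sum_l\beta_{k,l}a_l$ by that of $b_k=q+pb_{k-1}$ when $\sum_l\beta_{k,l}\le p<1$) to obtain $a_n\le c_n(\bar A+0.4c_n\alpha_n)/(c_n-1)$, and only at the end minimises over $c_n$. Your version reverses the order---guess the optimal constant $C$ first, then optimise $\eps$ inside the inductive step---which lets you bypass the auxiliary lemma entirely and makes transparent why $A\vee1$ rather than $A$ appears (it is forced by the base case $\kappa_1\le 1$). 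Both routes arrive at the same constant; yours is the more economical packaging of the same argument.
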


\begin{example} Let $W_n=n^{-1/2}\sum_{i=1}^n X_i$ where $X_i$ are i.i.d.\ with
$\IE X_i=0$ and $\Var X_i = 1$ and $\IE\abs{X_i}^3 =  \gamma \geq 1$. Let
$\kappa_n = \dk\bklr{\law(W_n),\N(0,1)}$. Set
$G = -n^{1/2}X_I$ and $W' = W - n^{-1/2}X_I$. Set also $W'' = W'$, $\~D = D$
and $S=1$. Hence $D = D' = -n^{-1/2}X_I$. We have
\be
    r_0 = r_1 = r_2 = r_3 = 0.
\ee
Furthermore,
\be
    r_9 \leq 6\gamma/\sqrt{n}, \qquad r_{10} \leq 3\gamma/\sqrt{n}
\ee
Note now (c.f. Lemma~\ref{lem8})
\bes
    \vartheta_\eps(X_n) 
    & = \sup_a\IP\kle{a\leq W_n \leq a + \eps \mid  X_n}\\
    & = \sup_a\IP\bbkle{\sqrt{\tsfrac{n}{n-1}}a\leq 
        W_{n-1} + \sqrt{\tsfrac{1}{n-1}}X_n \leq
       \sqrt{\tsfrac{n}{n-1}}a + \sqrt{\tsfrac{n}{n-1}}\eps + \bbmid X_n}\\
    & \leq \sqrt{\tsfrac{n}{2\pi(n-1)}}\eps + 2\kappa_{n-1} 
    \leq \eps + 2\kappa_{n-1},
\ee
hence
\be
    r_{11}(\eps) \leq 2\gamma(\eps + 2\kappa_{n-1})/\sqrt{n},\qquad
    r_{12}(\eps) \leq \gamma(\eps + 2\kappa_{n-1})/\sqrt{n}.
\ee
Putting these estimates into Theorem~\ref{thm4} we obtain
\be
    \kappa_k \leq \frac{8\gamma}{\sqrt{k}} + 0.4\eps +
\frac{5\gamma}{\eps\sqrt{k}}\kappa_{k-1}.
\ee
We can apply Lemma~\ref{lem1} with $A = 8\gamma$, $A_{k,k-1} = 5\gamma$
and $A_{k,l} = 0$ for $l < k-1$, and $\sigma_k = k^{-1/2}$. We have
$\alpha_n = 5\gamma\sqrt{2}$, thus, 
plugging this into~\eq{22}, $\kappa_n \leq 25\gamma/\sqrt{n}$. As this example
illustrates, the constants obtained this way are typically not optimal,
but nevertheless explicit.
\end{example}

\section{Couplings} \label{sec3}

In this section we present some well-known and some new couplings and show how
they can be represented in our general framework. The basis is always the
coupling $(W,W',G)$ and throughout this section (with the exception of classic
exchangeable pairs) we will only look at cases of actual Stein couplings, that
is, where~$r_0=0$. This implies in particular that $\IE W = 0$ (which,
nevertheless, has to be assumed explicitly in some cases to make the
construction work in the first place). Unless otherwise stated, the
variance~$\sigma^2$ of
$W$ is arbitrary, but finite and non-zero. Note that, if $(W,W',G)$ is a Stein
coupling, so is $(W/\sigma,W'/\sigma,G/\sigma)$, and hence we will usually omit
the standardising constant $\sigma^{-1}$ for ease of notation. To simplify or
optimize the
bounds, we sometimes will extend the coupling by different choices of $\~D$, 
$W''$ and~$S$. But, if not otherwise stated, we will make the basic assumption
throughout this section that $\~D = D$, $W''=W$ and~$S=1$.

We mostly present the construction of the couplings only and
not the particular form of the final bounds for the normal approximation. The
reason for this is that, once the coupling is constructed, one can directly
apply our theorems or corollaries of the main section to obtain the
corresponding bounds. Hence, stating them explicitly would be either just
repeating known results from the literature or rephrasing the results from the
main section. 

We need to clarify again that a Stein coupling by itself does by no
means imply closeness to normality or imply any convergence. As can be
seen from the case of quadratic forms (Section~\ref{sec8}), Stein couplings as
defined by \eq{8} can also be used for $\chi^2$ approximation.

Let throughout this part $[n] := \{1,2,\dots,n\}$ and $[0] := \emptyset$. Let
also in general $I$ and $J$ be independent random variables, uniformly
distributed on $[n]$ and independent of all else, but we will usually mention
this---and deviations from it---explicitly whenever we make use of these random
variables.

\subsection{Exchangeable pairs and extensions}\label{sec4}
This approach was introduced by Stein in a paper by \cite{Diaconis1977}. A
systematic
exposition was given by \cite{Stein1986}. 

\begin{constr}\label{con1} Assume that\/ $(W,W')$ is an exchangeable pair. If,
for some constant $\lambda>0$, we
have
\ben                                                             \label{23}
    \IE^W(W'-W) = -\lambda W,
\ee
then $(W,W',\frac{1}{2\lambda}(W'-W))$ is a Stein coupling.
\end{constr}

\cite{Rinott1997} generalised
\eq{23} to allow for some non-linearity in~\eq{23}; however, the resulting
coupling will only be an approximate Stein coupling.
 
\begin{subconstr} Assume that\/ $(W,W')$ is an exchangeable pair where $\IE
W = 0$ and\/ $\Var W = 1$. Assume that, for some constant $\lambda>0$, we
have
\ben                                                             \label{24}
    \IE^W(W'-W) = -\lambda W + R.
\ee
then, with $G=\frac{1}{2\lambda}(W'-W))$,
\be
    r_0 \leq \lambda^{-1}\IE\abs{R}, \qquad
    \abs{\IE(GD)-1} \leq \lambda^{-1}\abs{\IE(WR)} \leq \lambda^{-1}\sqrt{\Var
      R}.
\ee
(note that we use $\Var W =1$ only to obtain the last two
inequalities).
\end{subconstr}

The conditional expectation can of course always be written in the form of
\eq{24} for any~$\lambda$. However, we will need
$\lambda^{-1}\sqrt{\Var{R}}\tozero$
to obtain convergent bounds, and in this sense the choice of $\lambda$ is, at
least asymptotically, unique; see the discussion in the introduction of
\cite{Reinert2009}.

Note that we call this approach `classic' for this specific choice of~$G$.
There are many other ways to construct Stein couplings where $(W,W')$ is
exchangeable but $G$ is not a multiple of $W'-W$; we will give such examples
later on.

The classic exchangeable pairs approach is frequently used in the literature;
see for example \cite{Rinott1997}, \cite{Fulman2004}, \cite{Fulman2004a},
\cite{Rollin2007a}, \cite{Meckes2008} and others. Generally, one constructs a
``natural'' exchangeable pair $(W',W)$ and then hopes that \eq{24} holds with
$R=0$ or $R$ small enough to yield convergence. However, more often than not,
this will not succeed, even for simple examples as the $2$-runs examples below
illustrates. Based on work by \cite{Reinert2009}, we will present in
Section~\ref{sec4b} Stein couplings making use of a multivariate extensions of
\eq{23}
which will lead to appropriate modifications of $G$ such that \eq{8} holds. 
In Sections~\ref{sec9} and~\ref{sec10} we will also present two very
general couplings that are based on exchangeable pairs, but where $G$ is
chosen rather differently.

A few more detailed remarks about this approach are appropriate here. For this
specific choice of $G=(W'-W)/2\lambda$, \cite{Rollin2008} proves that
exchangeability is actually not necessary to prove a result such as
Theorem~\ref{thm2}, as long as we have equal marginals 
$\law(W')=\law(W)$. \cite{Rollin2008} uses a different way of deducing
a Stein identity of the form~\eq{12}. With $F(w)=\int_0^w f(x)dx$ one obtains
from Taylor's expansion that
\be
    F(W') - F(W) = 
    Df(W) + D\int_0^D (1-s/D) f'(W+s)ds,
\ee
so that, again with $G=D/2\lambda$, and assuming
$\law(W')=\law(W)$,
\ben                                                            \label{25}
    \IE\{G f(W)\} = \IE \bbbklg{G\int_0^D (1-s/D)f'(W+D)ds},
\ee
which serves as a replacement for~\eq{12}. 
In contrast, \cite{Stein1986} uses the antisymmetric function approach. If
$(W,W')$ is exchangeable then $\IE \bklg{(W'-W)(f(W')+f(W))} = 0$, and it is
not difficult to show that
\ben                                                            \label{26}
    \IE\klg{Gf(W)}  = \frac{1}{2}\IE \bbbklg{G\int_0^D f'(W+D)ds}.
\ee
Note that this is almost \eq{25} except that the factor $(1-s/D)$ is replaced
by~$1/2$. Note again that \eq{26} is only true under exchangeability whereas
\eq{25} holds for equal marginals. Surprisingly, better constants can be
obtained if \eq{25} is used instead of \eq{26}, although exchangeability is a
stronger assumption. Incidentally, in Section~\ref{sec13}, a coupling is used
which is not an exchangeable pair but has equal marginals, however, in the
context of a different construction than Construction~\ref{con1}.

%
%
%
%

\subsubsection{Multivariate exchangeable pairs}\label{sec4b}

In \cite{Reinert2009}, the classic exchangeable pairs approach was generalised
to $d$-dimensional vectors $W=(W_1,\dots,W_d)$ and $W'=(W'_1,\dots,W'_d)$ which
satisfy
\ben							\label{27b}
  \IE^W(W'-W) = -\Lambda W
\ee
for some invertible $(d\times d)$-matrix~$\Lambda$. They are able
to obtain multivariate normal approximation results in cases where the
exchangeable pair of univariate random variables $(W_1,W_1')$ does not satisfy
\eq{23}, but, using auxiliary random variates, an
embedding of that pair into a higher dimensional space satisfies \eq{27b}.
However, the transition to higher dimensions comes at the cost of having to
impose stronger conditions on the set of test functions. Hence, besides the
multivariate
approximation, it is therefore still of
interest to examine $W_1$ directly. It turns out that, once the higher
dimensional embedding satisfying \eq{27b} is found, it is easy to
construct a Stein coupling from that. 

\begin{subconstr}\label{con1b} Let $(W,W')$ be an exchangeable pair of
$d$-dimensional random vectors satisfying \eq{27} for some invertible~$\Lambda$.
Let $e_i$ be the $i$-th unit vector. Then
\be
  \bklr{W_i,W_i',\ahalf e_i^t\Lambda^{-1}(W'-W)}
\ee
is a Stein coupling.
\end{subconstr}

Indeed, 
\bes
  -\IE\bklg{Gf(W_i)}
  & = -\ahalf\IE\bklg{ e_i^t\Lambda^{-1}\IE^W(W'-W)f(W_i)}\\
  & = \ahalf\IE\bklg{ e_i^t\Lambda^{-1}\Lambda W f(W_i)}
    = \ahalf\IE\bklg{W_i f(W_i)}.
\ee  
and, using exchangeability, the corresponding result for~$\IE\bklg{
Gf(W'_i)}$ can be obtained in the same way. Hence, every multidimensional
exchangeable pair $(W,W')$ satisfying \eq{27b} gives rise to a univariate
Stein coupling for each individual coordinate.

Let us consider the case of $2$-runs on a circle. To this
end, let $\xi_1,\dots,\xi_n$ be a sequence of independent $\Be(p)$ distributed
random
variables. Let $V = \sum_{i=1}^n (\xi_i\xi_{i+1}-p^2)$ be the centered number of
$2$-runs,
where we put $\xi_{n+1} = \xi_1$ (hence `circle'). Consider now the
following coupling. With $\xi'_1,\dots,\xi'_n$ being independent copies of
$\xi_1,\dots,\xi_n$, let
$V' = V -\xi_{I-1}\xi_{I}-\xi_{I}\xi_{
I+1 }+ \xi_{I-1}\xi'_{I}+\xi'_{I}\xi_{I+1}$, where $I$ is
uniformly distributed on $[n]$ and independent of all else. It is easy to
see that $(V,V')$ is an exchangeable pair and that 
\ben                                                        \label{27}  
    \IE^V(V'-V) = -\frac{2}{n}V + \frac{2p}{n}\sum_{i=1}^n (\xi_i-p).
\ee
Even in this very simple example, the linearity condition \eq{24} cannot
be obtained with the above natural coupling. Based on the same exchangeable
pair, \cite{Reinert2009} use the embedding method to circumvent this problem. To
this end we introduce the auxiliary statistic $U = \sum_{i=1}^n(\xi_i-p)$ and
define $U' = U - \xi_I + \xi_I'$. Condition \eq{27b} is now
satisfied for $W=(U,V)$, $W'=(U',V')$ and 
\be
    \Lambda = \frac{1}{n}\left[\begin{array}{cc}1&0\\-2p&2\end{array}\right].
\ee
The inverse of $\Lambda$ is 
\be
  \Lambda^{-1} = n\left[\begin{array}{cc}1&0\\p&1/2\end{array}\right],
\ee
hence Construction~\ref{con1b} yields
\besn							\label{27d}
  G & = \frac{n}{2}\bklr{p(U'-U)+\ahalf(V'-V)}\\
    & = \frac{n}{2}\bklr{p\xi'_I -
p\xi_I+\ahalf\xi_{I-1}\xi'_{I}+\ahalf\xi'_{I}\xi_{I+1}-\ahalf\xi_{I-1}\xi_{I}
-\ahalf\xi_{I} \xi_{I+1}}
\ee
so that $(V,V',G)$ is a Stein coupling. Exploiting some specific properties in
this example, we may also choose
\ben							\label{27c}
  G  = \frac{n}{2}\bklr{p\xi'_I-p\xi_I +\xi'_I\xi_{I+1}-\xi_I\xi_{I+1}}
\ee
to obtain a somewhat simpler Stein coupling, but the similarity between
\eq{27d} and \eq{27c} is apparent. See
\cite{Reinert2009}, \cite{Reinert2009a} and \cite{Ghosh2010a}
for further examples of multivariate exchangeable pair couplings. 

\subsubsection{Finding $W$ for a given coupling and a given $G$} In some cases
it may not be clear from the beginning how to choose the main random variable
of interest~$W$.
Consider the Curie-Weiss model of ferromagnetic interaction. With $\beta\geq 0$
being the \emph{inverse temperature} and $h\in\IR$ the \emph{external field}
on
the state
space $\{-1,1\}^n$, we define the probabilities for each
$\sigma=(\sigma_1,\dots,\sigma_n)\in\{-1,1\}^n$ by the Gibbs measure  
\ben                                                    \label{28}
    \IP[\{\sigma\}] = Z^{-1}
    \exp\bbbklg{\frac{\beta}{n}\sum_{i<j}\sigma_i\sigma_j + h\sum_i\sigma_i}
\ee
where $Z=Z(\beta,h,n)$ is the partition function to make the probabilities sum
up to~$1$. A quantity of interest is the magnetization $m(\sigma) = n^{-1}\sum_i
\sigma_i\in[-1,1]$ of the system. However, in the low-temperature regime the
system will exhibit \emph{spontaneous magnetization} so that we may not be
interested in $m(\sigma)$ itself if $\sigma$ is drawn at random according
to~\eq{28} but in $m(\sigma)$ \emph{relative} to its corresponding
magnetization. To find a suitable correction term (which
shall serve here as an illustrative example only),
\cite{Chatterjee2007} proposes the following construction.

\begin{subconstr} Let $(\sigma,\sigma')$ be an exchangeable pair on some
measure space and let $\phi(\sigma,\sigma')$ be an anti-symmetric
function. Let $G = -\phi(\sigma,\sigma')/2$, $W = W(\sigma) =\IE^\sigma
\phi(\sigma,\sigma')$ and $W' = W(\sigma') =
\IE^{\sigma'}\phi(\sigma',\sigma)$. Then this defines a Stein coupling.
\end{subconstr}

Indeed,
\bes
  -\IE\bklg{Gf(W)} = \ahalf\IE\bklg{\phi(\sigma,\sigma')f(W)}
  = \ahalf\IE\bklg{Wf(W)}
\ee
and
\bes
  \IE\bklg{Gf(W')} & = -\ahalf\IE\bklg{\phi(\sigma,\sigma')f(W(\sigma'))}\\
   & = -\ahalf\IE\bklg{\phi(\sigma',\sigma)f(W(\sigma))}\\
   & = \ahalf\IE\bklg{\phi(\sigma,\sigma')f(W)} 
    =  \ahalf\IE\bklg{Wf(W)},
\ee
which proves \eq{8}.

Let us apply this to the Curie-Weiss model. First, given $\sigma$ is drawn from
\eq{28}, we define $\sigma'$ by choosing a
site $I$ uniformly at random and then we re-sample this site according to the
conditional distribution $\law(\sigma_I \,\mid\, \sigma_j,j\neq I)$, giving a
new $\sigma_I'$, but leaving all the other sites untouched. Now we set $\phi(
\sigma,\sigma')=
n(m(\sigma) - m(\sigma')) = \sigma_I-\sigma'_I$. It is not
difficult to show that
\be
    \IE^\sigma\phi(\sigma,\sigma')=
m(\sigma)-\frac{1}{n}\sum_{i}\tanh(\beta
m_i(\sigma)+\beta h),
\ee
where $m_i(\sigma) = \frac{1}{n}\sum_{j\neq i}\sigma_j$. Hence, we let 
$G = -(\sigma_I-\sigma'_I)/2$, 
\be
    W = W(\sigma) =
        m(\sigma)-\frac{1}{n}\sum_{i}\tanh(\beta m_i(\sigma)+\beta h),
\ee
and $W' := W(\sigma')$. As 
\be
    \bbabs{\tanh(\beta m(\sigma) + \beta h)
    - \frac{1}{n}\sum_{i}\tanh(\beta m_i(\sigma)+\beta h)} \leq
\frac{\beta}{n},
\ee  
we may alternatively choose $W = m(\sigma) - \tanh(\beta m(\sigma)+\beta h)$, in
which
case \eq{8} is not satisfied anymore, but we still have $r_0 \leq
\beta/n$.
The key here is to find $\phi(\sigma,\sigma')$ such that $\IE^\sigma
\phi(\sigma,\sigma')$ yields `something interesting'. In \cite{Chatterjee2007}
this construction is used to prove concentration of measure results for
such~$W$.

\subsubsection{Finding an antisymmetric $G$ through a Poisson
equation}

Assume now that $(X,X')$ is an exchangeable pair on some space
$\%X$ and $W = \phi(X)$ and $W' = \phi(X')$ for some functional
$\phi:\%X\to\IR$ with $\IE\phi(X)=0$.
\citet[Section~4.1]{Chatterjee2005a} proposes a general approach to find $G$
of a special form. Let $G(X',X) = \ahalf(\psi(X')-\psi(X))$
for some unknown functional $\psi:\%X\to\IR$ (in fact, \emph{any} anti-symmetric
function can be written in this form; see \cite{Stein1986}). Using
exchangeability, it is not
difficult to see that \eq{8} is satisfied if 
\be
    \psi(x)- P \psi (x) = \phi(x)
\ee
for every $x\in\%X$, which we recognize as a Poisson equation with
kernel $P\psi(x):=\IE^{X=x} \psi(X')$ for given $\phi$ and unknown~$\psi$.
A general (formal) solution is $\psi(x) = \sum_{k=0}^\infty P^k\phi(x)$. We
have the following.

\begin{subconstr}\label{con2}
Let $(X,X')$ be an exchangeable pair on a measure space $\%X$ and let
$\phi:\%X\to\IR$ be a measurable function such that $\IE\phi(X) = 0$; define
$P$ as above. If there is
a constant $C>0$ such that
\ben                                  \label{29}
  \sum_{k=0}^\infty \abs{P^k \phi(x) - P^k \phi(y)} \leq C
\ee
for every $x,y\in\%X$, then 
\be
  (W,W',G)=\bbklr{\phi(X),\phi(X'), \frac{1}{2}\sum_{k=0}^\infty \bklr{P^k
\phi(X) - P^k \phi(X')}}
\ee 
is a Stein coupling. 
\end{subconstr}

Note that boundedness $\abs{G}\leq C/2 $ is built in through
\eq{29} so that this construction is  a natural candidate for
Theorem~\ref{thm2}. We can give a more constructive version of this coupling.

\begin{subconstr} Assume that $(X,X')$, $\phi$ and $P$ are as in
Construction~\ref{con2}. Assume that we have two
Markov chains $\bklr{X_{n}}_{n\geq 0}$ and $\bklr{X'_{n}}_{n\geq 0}$ with
the transition dynamics given by $P$, and also $X_{0} = X$ and $X'_{0} = X'$.
Assume further that, for all $n$,
\ben
  \law\bklr{X_{n}\big| X,X'} = \law\bklr{X_{n}\big| X},\quad
  \law\bklr{X'_{n}\big| X,X'} = \law\bklr{X'_{n}\big| X'}.
\ee
Let now $T=\inf\bklg{n > 0\bmid X_{n} = X'_{n}}$ be the coupling
time of the two chains and assume that\/ $T < \infty$ almost surely. If, given
$T$, $I$ is uniformly distributed on $\{0,1,2,\dots,T-1\}$, then 
\ben                                                            \label{30}
  (W,W',G)=\bklr{\phi(X),\phi(X'), \ahalf T \bklr{\phi\klr{X_{I}} -
\phi\klr{X'_{I}} }}
\ee
is a Stein coupling. 
\end{subconstr}

Indeed, from
\be
  \IE\bklg{\phi(X_k)f(W)} = \IE \bklg{P^k(X)f(W)}
\ee
and
\be
  \IE\bklg{\phi(X'_k)f(W)} = \IE\bklg{f(W)\IE^{X,X'}\phi(X'_k)}
  = \IE\bklg{f(W)P^k\phi(X')},
\ee
we easily obtain
\bes
  \IE\bklg{T \bklr{\phi\klr{X_{I}} - \phi\klr{X'_{I}} }f(W)}
  & = \IE\sum_{k=0}^{T-1}\bklr{\phi\klr{X_{k}} - \phi\klr{X'_{k}} }f(W)\\
  & = \sum_{k=0}^{\infty}\IE\bklg{\bklr{\phi\klr{X_{k}} -
\phi\klr{X'_{k}}}f(W)}\\
  & = \sum_{k=0}^{\infty}\IE\bklg{\bklr{P^k\phi\klr{X} -
P^k\phi\klr{X'}}f(W)},
\ee
and, similarly, 
\bes
  \IE\bklg{T \bklr{\phi\klr{X_{I}} - \phi\klr{X'_{I}} }f(W')}
  &= \sum_{k=0}^{\infty}\IE\bklg{\bklr{P^k\phi\klr{X} -
P^k\phi\klr{X'}}f(W')}\\
  &= - \sum_{k=0}^{\infty}\IE\bklg{\bklr{P^k\phi\klr{X} -
P^k\phi\klr{X'}}f(W)},
\ee
where the second step uses exchangeability. Hence, it follows from
Construction~\ref{con2} that \eq{30} is a Stein coupling; see
\citet[Section~4.1]{Chatterjee2005a} for
more details, and see also \cite{Makowski1994} on general theory about Poisson
equations.

\subsection{Local dependence and related couplings} \label{sec5}

This is one of the earliest versions of Stein's method. Let in what follows $I$
be uniformly distributed on $[n]$, independent of all else. 

\begin{constr}\label{con3}Let $W=\sum_{i=1}^n X_i$ with~$\IE X_i=0$.
For each $i$, let $W'_i$ be such that
\ben                                                    \label{31}
  \IE\bklr{X_i\,|\, W'_i }=0.
\ee 
Then, $(W,W',G)=(W,W'_I,-nX_I)$ is a Stein coupling. 
\end{constr}

To see this we have on one hand
\ben                                             
      \label{32}
    -\IE\bklg{Gf(W)} = \sum_{i=1}^n \IE\bklg{X_if(W)} = \IE\bklg{Wf(W)},
\ee
and on the other hand 
\be 
    \IE\bklg{Gf(W')} = -\sum_{i=1}^n \IE\bklg{X_i f(W'_i)} = 0,    
\ee
due to~\eq{31}; hence \eq{8} is satisfied. 

The choice $G=-nX_I$ was first considered by \cite{Stein1972} for $m$-dependent
sequences, however this $G$ has broader applications. We now discuss some more
detailed constructions of $W'_i$ below.

\subsubsection{Local dependence} \label{sec6}

Local dependence was extensively studied in \cite{Chen2004a} under various
dependence settings, but of course this approach goes back to
\cite{Stein1972} and \cite{Chen1975}; a version for discrete random variables
is given by \cite{Rollin2008a}.  We can use the simplest form as a
starting point
 
\begin{subconstr}\label{con4} Assume that $W$ and $G$ are as in
Construction~\ref{con3}. Assume in addition that, for each $i\in[n]$,
there is $A_i\subset [n]$ such that $X_i$ and $(X_j)_{j\in A_i^c}$ are
independent. Then, with $W'_i = W - \sum_{j\in A_i} X_j$, \eq{31} is
satisfied. 
\end{subconstr}

This first-order dependence is usually referred to as (LD1) and is
enough to obtain a Stein coupling. However, it is possible to extend this
coupling.

\begin{subconstr}\label{con5} Assume that $W$ and $G$ and $W_i$ are as in
Constructions~\ref{con3} and~\ref{con4} and that~$\Var W = 1$. Assume in
addition that there is $B_i\subset [n]$ such that $A_i\subset B_i$ and
$(X_j)_{j\in A_i}$ and $(X_j)_{j\in B_i^c}$ are independent. Define $W_i'' = W -
\sum_{j\in B_i} X_j$; then $W'' := W_I''$ is independent of $GD$ and hence~$r_3
= 0$.
\end{subconstr}

The conditions of Constructions~\ref{con4} and~\ref{con5} together are
referred to as (LD2).

\subsubsection{Decomposable random variables} \label{sec7}
This version of local dependence was popularized by
\cite{Barbour1989} for smooth test functions, by \cite{Raic2004} for
Kolmogorov distance and by \cite{Rollin2008a} for total variation
approximation of discrete random variables. It uses a refined version of the
concept of second-order neighborhood and makes use of non-trivial~$\~D$ and~$S$.

\begin{subconstr} Assume that $W$ and $G$ and $W_i$ are as in
Constructions~\ref{con3} and~\ref{con4}. Assume in addition that, for each $i$
and for each $j\in A_i$, there is $B_{i,j}\subset[n]$ such that $A_i\subset
B_{i,j}$ and such that $(X_i,X_j)$ is independent of
$(X_j)_{j\in B^c_{i,j}}$. Let $K_i = \abs{A_i}$ and define $\~D = K_I X_J$
where, given $I$, $J$ is uniformly distributed on $[K_I]$, but independent of
all else. Then, $\IE^X(G\~D) = \IE^X(GD)$, hence~$r_1 = 0$. Let $S =
nK_I\sigma_{I,J}$ where $\sigma_{i,j}=\IE(X_iX_j)$, then~$r_2=0$. Define
$W_{i,j}'' =
W - \sum_{k\in B_{i,j}} X_k$; then $W'' :=
W_{I,J}''$ is independent of $G\~D$ and $S$ and hence~$r_3 = 0$.
\end{subconstr}

Hence, if we can choose $B_{i,j}$ such that $B_{i,j}\subsetneq B_i$, where $B_i$
is as for the standard (LD2) local dependence setting from
Construction~\ref{con5}, we should be able to
improve our bounds, as $W''-W = \sum_{k\in B_{I,J}} X_k$ contains fewer
summands as compared to $\sum_{k\in B_{I}} X_k$ from Construction~\ref{con5}.

Note that, under third moment conditions, \cite{Barbour1989} obtain a
Wasserstein bound of order
\ben                            \label{33}            
  \sum_{i=1}^n \IE \abs{X_i Z_i^2}
  +\sum_{i=1}^n\sum_{j\in A_i}\bklr{\IE \abs{X_i
      X_j V_{i,j}}  + \abs{\IE(X_i X_j)}\IE\abs{Z_i+V_{i,j}}}
\ee
with
\be
  Z_i := \sum_{j\in A_i} X_k, 
  \qquad V_{i,j}:=\sum_{k\in B_{i,j}\setminus A_i} X_k,
\ee
(note that in \cite{Barbour1989} the coarser expression $\IE\abs{X_i X_j}$ is
used, but it is easy to see that this can be sharpened to $\abs{\IE (X_i
X_j)}$. 
Using Corollary~\ref{cor2}, we obtain an order of
\ben                            \label{34}
  \sum_{i=1}^n \IE \abs{X_i Z_i^2} 
   +\sum_{i=1}^n\sum_{j\in A_i}\bklr{\IE\abs{X_i X_j (Z_i+V_{i,j})}
   +\abs{\IE(X_i X_j)}\IE\abs{Z_i+V_{i,j}}}.
\ee
In most cases we can expect that these two bounds will yield similar results:
Indeed, a useful upper bound on both estimates is 
\be
  \sum_{i=1}^n\sum_{j\in A_i}\sum_{k\in B_{i,j}} 
    \bklr{\IE\abs{X_iX_jX_k} +\abs{ \IE(X_iX_j)}\IE\abs{X_k}} 
\ee
up to constants. 

Consider the von Mises statistics as an example, where, for independent random
variables $X_1,\dots,X_n$, we have $W=\sum_{p,q}\phi_{p,q}(X_p,X_q)$ for some
functionals~$\phi_{p,q}$. Clearly, we can choose $A_{(p,q)} =
\{(k,l)\,:\,\text{$k=q$ or $l=q$}\}$ as first-order neighborhood. However, in
the standard local approach framework, we would need to let $B_{(p,q)} =
[n]\times[n]$ for (LD2) so that $W'' = 0$ and hence $\abs{D'}=\abs{W}$ which
would not yield useful bounds. In the refined setting we can choose, for every
$(p',q')\in A_{(p,q)}$, the set $B_{(p,q),(p',q')} :=
\{(k,l)\,:\,\text{$k\in\{p,p'\}$ or $l\in\{q,q'\}$}\}$, so that now
$\abs{B_{(p,q),(p',q')}}$ is only of order~$n$. Of
course, it will depend on the concrete choice of functionals $\phi_{p,q}$
whether normal approximation is appropriate at all; see \cite{Barbour1989} for
applications to random graph related statistics.

\subsubsection{Special case: quadratic forms}\label{sec8} Let
$\xi_1,\dots,\xi_n$ be
independent, centered random variables with unit variance. Let
$A=(a_{ij})$ be a real
symmetric $(n\times n)$-matrix. Let $W = \bklr{\sum_{i,j}
a_{ij}\xi_i\xi_j - \sum_i a_{ii}}$. It would be straightforward to use the above
method of decomposable random variables in this situation. However, due to the
multiplicative structure (or, in $U$-statistics language, because the kernel
$\phi_{i,j}(x,y)=a_{ij}xy$ is degenerate for centered random variables) there is
an interesting alternative.

\begin{subconstr} Let $W$ be as above. Let also $Y_i := \sum_j a_{ij}\xi_j$, $G
= -n(\xi_I
Y_I - a_{II})$ and $W' = W - (2\xi_I Y_I - a_{II}\xi_I^2)$. Then
this defines a Stein coupling. 
\end{subconstr}

It is not
difficult to see that \eq{8} holds. Again, it depends on the matrix $A$
whether we can expect normal like behaviour of $W$ or not: essentially this
coupling was used by \cite{Luk1994} in the context of $\chi^2$-approximation
for the case where all the entries of $A$ are $1$, corresponding to the
square of a sum of random variables.

\subsubsection{Local exchangeable randomization}\label{sec9}
This coupling was proposed in \cite{Reinert1998a}. Its use was limited by the
fact that, if the classic exchangeable pairs
approach (as discussed in Subsection~\ref{sec4}) is used along with this
coupling, the linearity condition \eq{24} will in general not be satisfied with
$R$ small enough, but with the choice $G=-nX_I$, we can now handle
this
coupling. However, some care is needed. 

\begin{subconstr}\label{con6} Let $W$ and $G$ be as in
Construction~\ref{con3}.
Let $(X'_{i,j})_{i,j\in[n]}$ be a
collection of random variables,
such that, with $W'_i = \sum_{j=1}^n X'_{i,j}$, we have that
\ban
  (i)&\quad\text{for each $i$, $X'_{i,i}$ is independent of $W$,}\label{37}\\
  (ii)&\quad\text{for each $i$, $\bklr{(X_k)_k,(X'_{i,k})_k}$ is exchangeable.}
                                                       \label{38}
\ee                                                   
Then \eq{31} is satisfied.
\end{subconstr}

It is often not too difficult to construct $(X'_{i,j})_j$ for a given $i$ such
that $\law(W'_i) = \law(W)$ and such that $X'_{i,i}$ is independent of $W$
and hence $\IE^W X'_{i,i} = 0$. However, it is important to note that this does
not suffice as we ultimately need $\IE^{W'_i} X_{i} = 0$, which is, however,
guaranteed under the additional Condition~\eq{38}.

In \cite{Reinert1998a}, it was incorrectly deduced from \eq{37} and the property
 \ben                                                   
\label{39}
    \law(X'_{i,j},j\neq i\,|\,X'_{i,i}=x)=\law(X_j,j\neq i\,|\,X_i=x)
\ee
that $(W,W'_i)$ is exchangeable. It is not difficult to find examples for which
$(W,W'_i)$ is not exchangeable, but \eq{37} and \eq{39} are still
true; see Remark~\ref{rem2}.

\subsection{Size-biasing}

This approach was introduced in \cite{Baldi1989} and further explored in
\cite{Goldstein1996}, \cite{Dembo1996}, \cite{Goldstein2008} and others.

\begin{constr} Let $V$ be a non-negative random variable with $\IE V = \mu>0$.
Let $V^s$ have the \emph{size-biased distribution}
of~$V$, that is, for all bounded~$f$,
\ben                                                    \label{40}
    \IE\bklg{Vf(V)} = \mu\IE f(V^s).
\ee  
Then
\be
  (W,W',G) = \bklr{V-\mu, V^s-\mu, \mu}
\ee
is a Stein coupling.
\end{constr}

Using \eq{40}, we obtain
\bes
    \IE\bklg{Gf(W') - Gf(W)}
    & = \IE\bklg{\mu f(V^s-\mu)-\mu f(V-\mu)}\\
    & = \IE\bklg{V f(V-\mu)-\mu f(V-\mu)}\\
    & =\IE\bklg{Wf(W)},
\ee
so that \eq{8} is satisfied, indeed. 

One of the advantages of this approach is apparent if bounds for the Kolmogorov
metric are to be obtained. In the light of Theorem~\ref{thm1}, we see that
$G/\sigma$ is already bounded by $\alpha=\mu/\sigma$, so that we only need to
concentrate on
finding a bounded coupling $(W,W')$; see \cite{Goldstein2008} for such a
coupling in the context of coverage problems.

\subsection{Interpolation to independence}\label{sec10} 

For this coupling the key idea is to construct a sequence of random
variables that `interpolates' between $W$ and an independent copy of $W$ by
means of small perturbations. A
special case of this coupling was introduced by \cite{Chatterjee2008}. The
construction has apparent similarities to Lindeberg's telescoping
sum in his prove of the CLT for sums of independent random variables.
Let in the following construction $I$ be uniformly
distributed on $[n]$ and independent of all else. 

\begin{constr}\label{con7} Assume $\IE W = 0$. Assume that
for
each $i\in[n]$ we have a $W'_i$ which is close to~$W$. Assume that there is
a sequence of random variables $V_0,V_1,\dots,V_n$ such that $\IE^W V_0
=W$ and such that $V_n$ is independent of $V_0$
and assume that, for every $i\in[n]$, 
\ben                                                                \label{42}
    \law\bkle{(W,V_{i-1}),(W'_i,V_{i})} = \law\bkle{(W'_i,V_{i}),(W,V_{i-1})}
\ee
for every~$i\in[n]$. Then
\ben                                                                \label{43}
  (W,W',G) = \bklr{W,W'_I,\tsfrac{n}{2}(V_{I}-V_{I-1})}  
\ee
is a Stein coupling.
\end{constr}

Note that \eq{42} implies in particular that $(W,W'_i)$ is an exchangeable pair
for each $i$ and also that $\law(V_i)=\law(V_0)$ for all $i$ by induction. We
have
\bes 
    \IE\bklg{Gf(W)} & = \ahalf\IE\sum_{i=1}^{n}(V_i-V_{i-1})f(W) \\
    & = \ahalf\IE\bklg{(V_n-V_0)f(W)}\\
    & =-\ahalf\IE\bklg{Wf(W)},
\ee
due to the independence assumption, and, due to \eq{42},
\bes
    \IE\bklg{G f(W')}
    &=\frac{1}{2}\sum_{i=1}^n\IE\bklg{\klr{V_i-V_{i-1}}f(W'_i)}\\
    &=\frac{1}{2}\sum_{i=1}^n\IE\bklg{\klr{V_{i-1}-V_i}f(W)}\\
    &=-\frac{1}{2}\sum_{i=1}^n\IE\bklg{\klr{V_i-V_{i-1}}f(W)}\\
    &=-\IE\bklg{Gf(W)} = \ahalf\IE\bklg{Wf(W)}.
\ee 
Hence, \eq{43} is a Stein coupling, indeed.

\subsubsection{Functionals of independent random variables} A specific version
of this coupling was used by \cite{Chatterjee2008} for functionals of
independent random variables. We give a simpler version first and discuss then
the (implicitly used) coupling of \cite{Chatterjee2008}.

\begin{subconstr}\label{con8}Let $X=(X_1,\dots,X_n)$ be a collection of
independent random variables and let $W=F(X)$ be any functional of $X$ such that
$\IE F(X) = 0$. Let $X'=(X'_1,\dots,X'_n)$
be an independent copy of $X$ and define for all subsets $A\subset [n]$ the
vectors $X^A = (X^A_1,\dots,X^A_n)$ by 
\be
    X^A_i = \begin{cases}
             X'_i & \text{if $i\in A$,}\\
             X_i  & \text{if $i\notin A$,}
            \end{cases}
\ee
that is, $X^A$ is simply $X$ but with all $X_i$ replaced by $X'_i$ for which
$i\in A$; define also $W'_A = F(X^A)$. Let $W'_i := W'_{\{i\}}$ for each
$i\in[n]$ and $V_i := W'_{[i]}$ for each $i\in
[n]\cup\{0\}$. Then the conditions of Construction~\ref{con7} are satisfied.
\end{subconstr}

Clearly, $V_n$ is independent of $V_0$ and it is not difficult to see that
\eq{42} holds. The interpolating sequence is therefore constructed simply by
replacing the $X_i$ by $X'_i$ in increasing order. For this coupling to be
useful we would typically need that $F$ is not too sensitive to changes in the
individual coordinates.

The implicit coupling used by \cite{Chatterjee2008} is different in the sense
that, instead of using a fixed order in which the $X_i$ are replaced, a random
order is used. 

\begin{subconstr}\label{con9}Assume that $W$, $F$, $X$ and $X'$ are as in
\ref{con8}. Let $\Pi$ be a uniformly drawn random permutation of
length $n$, independent of everything else. For any permutation $\pi$ we denote
by $\pi(A)$ simply the image of $A$ with respect to~$\pi$. Define now $W'_i :=
W'_{\{\Pi(i)\}}$ and $V_i := W'_{\Pi([i])}$. Then the conditions of
Construction~\ref{con7} are satisfied.  
\end{subconstr} 

Exchangeability \eq{42} follows
from Construction~\ref{con8} by conditioning on~$\Pi$. Let us
now prove that Construction~\ref{con9} indeed leads to the representation used
by
\cite{Chatterjee2008}. Clearly, $G =
\frac{1}{2n}(W'_{\Pi([I])}-W'_{\Pi([I-1])})$, thus
\bes
    \IE^{X,X'}(GD) 
    &=\frac{1}{2n!}\sum_{i=1}^n\sum_{\pi}
        (W'_{\pi([i])}-W'_{\pi([i-1])})(W-W'_{\{\pi(i)\}}).
\ee
We re-write the sum over all permutation as a sum over all possible subsets
induced by $\pi([i-1])$, that is, all possible subsets $A\subset[n]$ with
$\abs{A} = i-1$, and over all possible values of $\pi(i)$ which range over
$[n]\setminus A$. Taking into account multiplicities from 
all possible permutations within the sets $\pi([i-1])$ and
$\pi([n]\setminus[i])$ we obtain
\bes
    &\IE^{X,X'}(GD)\\
      &\quad=\frac{1}{2n!}\sum_{i=1}^n\sum_{A\subset[n],\atop\abs{A}=i-1}
         \sum_{j\notin
          A}\abs{A}!(n-\abs{A}-1)!(W'_{A\cup\{j\}}-W'_A)(W-W'_{\{j\}})\\
     &\quad=\frac{1}{2}\sum_{A\subsetneq[n]}
         \sum_{j\notin A}
   \frac{1}{{n\choose\abs{A}}(n-\abs{A})}(W'_{A\cup\{j\}}-W'_A)(W-W'_{\{j\}}),
\ee
which is exactly the expression used by \cite[Eq.~(1)]{Chatterjee2008}.

\subsection{Local symmetry}

An instance of this coupling was used by \cite{Chen1998} for sums of independent
random variables.

\begin{constr}\label{con10}
Assume that $W$, $W'$, $G_\alpha $ and $G_\beta $ are random variables such that
\ban                             
    \IE\klg{G_\alpha  f(W')} &= \IE\klg{G_\beta  f(W')},    \label{45}\\
    \IE\klg{G_\alpha  f(W)} &= \IE\klg{W f(W)},              \label{46}\\
    \IE\klg{G_\beta f(W)} &= 0,                               \label{47}
\ee
for all $f$ for which the expectations exist. Then, $(W,W',G_\beta -G_\alpha
)$ is a Stein coupling. 
\end{constr}

Indeed, using \eq{45} for the first equality and then
\eq{46} and \eq{47}, 
\be
    \IE\klg{(G_\beta -G_\alpha )(f(W')-f(W))} = \IE\klg{(G_\alpha-G_\beta) 
f(W)} = \IE\klg{Wf(W)}.
\ee
Note that we refer to Condition~\eq{45} as \emph{local symmetry} due to the
following example.

Let $X=(X_1,\dots,X_n)$ be a sequence of centered independent
random variables and let $X'$ be an independent copy of~$X$. Let $W=\sum_i X_i$
and assume that~$\Var W = 1$. Define $G_\alpha  = X_I$ and $W' = W + X'_I$ (we
`duplicate' a small part of $W$). Define also $G_\beta  = X_I'$. Then it is not
difficult to verify Conditions \eq{45} and~\eq{46}. Identity~\eq{45}
is due to the symmetry of $G_\alpha $ and $G_\beta $ relative
to $W'$, which is the crucial aspect of the construction. 

\subsection{Abstract approaches}

One might wonder if, for a given arbitrary coupling $(W,W')$, one can always
find $G$ to make $(W,W',G)$ a Stein coupling. 

\begin{constr} Let $(W,W')$ be a pair of integrable random variables. Let
$\%F$ and $\%F'$ be two $\sigma$-algebras with $\sigma(W)\subset \%F$
and $\sigma(W')\subset \%F'$. Let $V$ be a random variable such that
\ben						\label{47b}
  \IE^W V = W.
\ee
Define (formally) the random variable
\be
  G = -V + \IE(V|\%F') - \IE(\IE(V|\%F')|\%F)
      + \IE(\IE(\IE(V|\%F')|\%F)|\%F') - \dots.
\ee
If the above sequence converges absolutely almost surely, then $(W,W',G)$ is a
Stein coupling. 
\end{constr}

To see this, first condition $G$ on $\%F'$ which yields $\IE(G|\%F') = 0$. On
the other hand, $\IE(G|\%F) = -\IE(V|\%F)$ hence $\IE^W G = -W$ and \eq{8}
follows. Note that \eq{47b} is not to be confused with the usual linearity
condition \eq{23}---we can always take $V = W$ to satisfy \eq{47b}.

Consider the example $W = \sum_{i=1}^n X_i$, a sum of independent, centered
random variables. With $I$ independent and uniformly distributed on $[n]$, let
$W' = W - X_I$. Take $V = W$ and note that
\be
  \IE(W|W') = \IE(W'+X_I|W') = W',\qquad  \IE(W'|W) = (1-\anth)W.
\ee
Hence,
\bes
  -G & = W - W' + (1-\anth)W - (1-\anth)W'+ (1-\anth)^2W - (1-\anth)^2W'+\dots\\
    & = X_I + (1-\anth)X_I +(1-\anth)^2 X_I +  \dots = n X_I.
\ee
Alternatively, chosing $V = nX_I$ yields the same result directly, as
$\IE^{W'}V = 0$ and hence $G=-V = -nX_I$.

\section{Applications}

In this section we give some applications of the main theorems and
corollaries using different couplings from
Section~\ref{sec3}.
Table~\ref{tab} gives an overview over the different couplings  we use in
this section along with some important characteristics of the
involved random variables.

\begin{table}
\footnotesize
\begin{tabular}{|c|c|c|c|c|}
\cline{2,3,4,5}
\multicolumn{1}{c|}{}&\vbox to4ex{} Construction & $W'\eqlaw W$ &
\raisebox{0.9ex}{\vtop to 5ex{\hsize=2cm $(W',W)$ exchangeable}} & $W''\eqlaw
W$\\
\hlx{hv}
Hoeffding (Var.~1)&\ref{con1} (p.~\pageref{con1}) &yes & yes & --\\
\hlx{v}
Hoeffding (Var.~2)&\ref{con6} (p.~\pageref{con6})&yes & yes & --\\
\hlx{v}
Hoeffding (Var.~3)&\ref{con10} (p.~\pageref{con10})&no & no & yes\\
\hlx{v}
Occupancy &\ref{con6} (p.~\pageref{con6}) &yes & yes & yes\\
\hlx{v}
Neighbourhood &\ref{con3} (p.~\pageref{con3})&no & no & no\\
\hlx{v}
Random graphs &\ref{con3} (p.~\pageref{con3})&yes & no & yes\\
\hlx{h}
\end{tabular}
\medskip
\caption{\label{tab} Overview over the couplings used in the
different applications along with some interesting properties of the involved
random variables.}
\end{table}

\subsection{Hoeffding's combinatorial statistic} \label{sec12}

Let $a_{i,j}$, $1\leq i,j \leq n$, be real numbers such that
$\sum_{k=1}^n a_{i,k} = \sum_{k=1}^ n a_{k,j} = 0$ and $\frac{1}{n-1}\sum_{i,j}
a_{i,j}^2=1$. Let $\pi$ be a uniformly chosen random permutation of size $n$ and
$W = \sum_{i=1}^n a_{i,\pi(i)}$. Then it is routine to see that $\IE W = 0$
and~$\Var W = 1$. Note that, for a Stein coupling $(W,W',G)$, unit variance
of $W$ implies~$\IE(GD)=1$.  Let in what follows $I_1$ and $I_2$ be
independent and uniformly chosen random numbers from $[n]$. 

\medskip

\paragraph{\bf Variant 1 (c.f.\ Construction~\ref{con1})} Define $\pi' = \pi
\circ (I_1\, I_2)$ and $W' = \sum_{i=1}^n a_{i,\pi'(i)}$. Then $(W,W')$ is a
classical exchangeable pair, i.e. \eq{23} holds with $\lambda = 2/n$, or,
equivalently, $G = \tsfrac{n}{4}(W'-W) =
\tsfrac{n}{4}(a_{I_1,\pi(I_2)}+a_{I_2,\pi(I_1)}-a_{I_1,\pi(I_1)}-a_{I_2,\pi(I_2)
} )$ makes $(W,W',G)$ a Stein coupling.

\medskip

\paragraph{\bf Variant 2 (c.f.\ Construction~\ref{con6})} Define $W'$ as in
Variant~1. With $G = -n a_{I_1,\pi(I_1)}$, $(W,W',G)$ is also a Stein
coupling. This coupling is the (implicit) basis for the construction in
\cite{Ho1978} and \cite{Bolthausen1984}.

\medskip

In both of the previous variants, our $W'$ is defined with respect to a
perturbation $\pi'$ of~$\pi$. Thus, $W'$ can be seen as an instance of the
\emph{replacement perturbation} from the introduction. This comes at the cost of
$D$ having four terms. One may wonder whether a \emph{deletion} construction 
is possible, where $W'$ is defined by just `removing a random small part' of $W$
(see Section~\ref{sec6}). This is possible, indeed, so that we do not need to go
through constructing~$\pi'$. Despite the fact that the following construction is
very simple, it has gone unnoticed in the
literature so far.

\medskip 

\paragraph{\bf Variant 3 (c.f.\ Construction~\ref{con10})} Define 
\be
  W' = W - 
  \begin{cases}
    (a_{I_1,\pi(I_1)} + a_{I_2,\pi(I_2)}) & \text{if $I_1\neq I_2$,}\\
    a_{I_1,\pi(I_1)} &\text{if $I_1 = I_2$.}
  \end{cases}
\ee
Let $G = n(a_{I_1,\pi(I_2)}-a_{I_1,\pi(I_1)})$; then
$(W,W',G)$ is a Stein coupling. To see this, note first that $\sigma(W')
\subset \%F := \sigma\bklr{I_1,I_2,(\pi(i);i\neq I_1,I_2)}$. Now, if $I_1\neq
I_2$, the conditional distributions $\law(\pi(I_1)|\%F)$ and
$\law(\pi(I_2)|\%F)$ are equal
and assign probability $1/2$ to each of the points in the set 
$\{\pi(I_1),\pi(I_2)\}$ so that $\IE\klg{Gf(W')} = 0$. The same arguments
from Variant 1 lead to $\IE\klg{Gf(W)} = -\IE\klg{Wf(W)}$. 
 
To see the connection with Construction~\ref{con10}, let $G_\alpha = n
a_{I_1,\pi(I_1)}$ and $G_\beta = n a_{I_1,\pi(I_2)}$; then \eq{45}--\eq{47} are
satisfied.

\medskip

Let us quickly illustrate how to obtain a bound in terms of
\be
  \norm{a} := \sup_{1\leq i,j\leq n}\abs{a_{i,j}}.
\ee
With the Stein coupling from Variant 3 we have
\be
  \abs{G} \leq 2n\norm{a} =: \alpha, \qquad \abs{D} \leq 2\norm{a} =: \beta.
\ee
We will make use of an auxiliary variable $W''$, which can be constructed so
that it is independent of $(I_1,I_2,\pi(I_1),\pi(I_2))$ and such that
\ben              \label{49}
  \abs{D'} \leq 8\norm{a} =: \beta'.
\ee
Hence, applying Theorem \ref{thm2} with the above random variables and
constants and in addition $\~D = D$ and $\~\beta = \beta$ we easily obtain the
following result. 

\begin{theorem} With $W$ and $\norm{a}$ as above,
\be
  \dk\bklr{\law(W),\N(0,1)}\leq 448 n \norm{a}^3 + 96 \norm{a}.
\ee
\end{theorem}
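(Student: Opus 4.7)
The plan is to apply Theorem~\ref{thm2} to the Stein coupling $(W,W',G)$ from Variant~3, with the additional choices $\~D = D$, $S = 1$, and a carefully constructed auxiliary $W''$. Because $(W,W',G)$ is already a Stein coupling we have $r_0 = 0$, and the choices $\~D = D$ and $S = 1$ give $r_1 = r_2 = 0$ immediately. From the definition of Variant~3 one reads off the deterministic bounds $\abs{G} \leq 2n\norm{a}$ and $\abs{D} \leq 2\norm{a}$, so setting $\alpha = 2n\norm{a}$, $\beta = \~\beta = 2\norm{a}$ and $\gamma = 1$ makes both truncation error terms $r_6$ and $r_6'$ vanish identically.

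The heart of the argument is the construction of $W''$ meeting two requirements: (i) $\abs{W'' - W} \leq 8\norm{a} =: \beta'$, and (ii) $W''$ is independent of the quadruple $(I_1,I_2,\pi(I_1),\pi(I_2))$. For~(ii), note that $GD$ is a measurable function of that quadruple and $S = 1$ is constant, so the independence yields $\IE^{W''}(G\~D - S) = \IE(GD) - 1$; and $\Var W = 1$ forces $\IE(GD) = 1$, hence $r_3 = 0$. To build such a $W''$ I would introduce an independent uniform pair $(K_1,K_2)\in [n]^2$ and form a modified permutation $\pi''$ by resampling the values of $\pi$ at positions $I_1, I_2$ via a symmetrization over the four-element set $\{\pi(I_1),\pi(I_2),\pi(K_1),\pi(K_2)\}$ designed so that the conditional law of $\pi''$ given $(I_1,I_2,\pi(I_1),\pi(I_2))$ does not depend on the values of that quadruple. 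Setting $W'' := \sum_i a_{i,\pi''(i)}$ then differs from $W$ by at most four summands of absolute value $\leq \norm{a}$, which gives~(i), while (ii) is built in.

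With every error term in Theorem~\ref{thm2} other than the last two vanishing, the bound collapses to $\dk(\law(W),\N(0,1)) \leq 2\bklr{(\alpha\~\beta+\gamma)(\IE\abs{W}+5)\beta' + (\IE\abs{W}+3)\alpha\beta^2}$. Using $\IE\abs{W} \leq \sqrt{\Var W} = 1$ and substituting $\alpha = 2n\norm{a}$, $\beta = \~\beta = 2\norm{a}$, $\beta' = 8\norm{a}$, $\gamma = 1$, a short computation yields $2\bklr{6\cdot 8\norm{a}(4n\norm{a}^2 + 1) + 4\cdot 2n\norm{a} \cdot 4\norm{a}^2} = 448 n\norm{a}^3 + 96\norm{a}$, as claimed.

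The main obstacle I anticipate is requirement~(ii) in the construction of $W''$: one must simultaneously keep $\pi''$ a small perturbation of $\pi$ and arrange that the joint law of $\pi''$ and $(I_1,I_2,\pi(I_1),\pi(I_2))$ factorizes. A naive resampling of $\pi$ at positions $I_1,I_2$ will not achieve independence from the values $\pi(I_1),\pi(I_2)$, which is why the four-element symmetrization involving the auxiliary indices $K_1,K_2$ is essential; once it is pinned down, verifying (i) and (ii) is a routine combinatorial check and the rest of the proof is pure bookkeeping in Theorem~\ref{thm2}.
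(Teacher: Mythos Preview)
Your overall strategy and arithmetic match the paper exactly: Variant~3 coupling, Theorem~\ref{thm2} with $\~D=D$, $S=1$, the truncation constants $\alpha=2n\norm{a}$, $\beta=\~\beta=2\norm{a}$, $\beta'=8\norm{a}$, $\gamma=1$, and the final computation giving $448n\norm{a}^3+96\norm{a}$ are all correct. The one genuine gap is the construction of~$W''$, which you rightly identify as the crux but leave unspecified.

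A forward construction of the type you sketch---draw auxiliary positions $(K_1,K_2)$ and shuffle the values of $\pi$ among positions $I_1,I_2,K_1,K_2$---is delicate: after any such shuffle, $\pi''$ still agrees with $\pi$ outside those four positions, and the complement of its range there always contains $\{\pi(I_1),\pi(I_2)\}$, so it is not clear that the dependence on $(\pi(I_1),\pi(I_2))$ is fully washed out once you marginalise over $(K_1,K_2)$; handling the overlap cases ($K_1\in\{I_1,I_2,K_2\}$, etc.) consistently adds further complication. The paper sidesteps all of this with Bolthausen's \emph{reverse} construction: take a uniform permutation $\tau$ independent of everything and set $W''=\sum_i a_{i,\tau(i)}$; draw $(I_1,I_2,J_1,J_2)$ independently of $\tau$ (with $(I_1,I_2,J_1)$ uniform on $[n]^3$ and $J_2$ uniform on $[n]\setminus\{J_1\}$ when $I_1\neq I_2$, else $J_2=J_1$); then build $\pi$ from $\tau$ so that $\pi(I_1)=J_1$, $\pi(I_2)=J_2$, altering at most four entries of~$\tau$. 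Independence of $W''$ from $(I_1,I_2,\pi(I_1),\pi(I_2))=(I_1,I_2,J_1,J_2)$ is then automatic, one checks that $\pi$ is uniform and that $(I_1,I_2)$ is still uniform given~$\pi$ (so Variant~3 remains valid), and $\abs{D'}\leq 8\norm{a}$ follows from $\tau$ and $\pi$ differing in at most four places. With this $W''$ your bookkeeping goes through unchanged.
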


\begin{proof}
We only need the existence of $W''$ as claimed above; we use the construction of
\cite{Bolthausen1984}. It turns out that it is more convenient to construct $W$
from~$W''$. Let $\tau$ be a uniformly chosen random
permutation of  $[n]$ and let $W'' = \sum_{i}a_{i,\tau(i)}$. Let
$(I_1,I_2,J_1,J_2)$ be random variables independent of $\tau$ such that
$(I_1,I_2,J_1)$ is uniform on~$[n]^3$, such that $J_2$ is uniform on
$[n]\setminus\{J_1\}$ if $I_1\neq I_2$ and such that $J_1= J_2$ if $I_1 = I_2$.
One can now construct a permutation $\pi$ which again has uniform distribution,
and such that 
\be
  \pi(I_1) = J_1,\qquad \pi(I_2) = J_2,
\ee
and such that $\tau$ and $\pi$ differ in at most four positions. Now
$\IE^\tau(G_3(W'_3-W)) = \IE(G_3(W'_3-W))$ (and hence $r_3=0$) follows from the
independence assumption between $\tau$ and $(I_1,I_2,J_1,J_2)$. Note that the
calculations from Variant 3 above still hold, as, given $\pi$, $(I_1,I_2)$ is
uniformly distributed on~$[n]^2$, hence independent of $\pi$ as required.
\end{proof}

\begin{remark}
Note that \cite{Goldstein2005b}, using zero-biasing, obtains
\be
  \dk\bklr{\law(W),\N(0,1)}\leq 1016 \norm{a} + 768 \norm{a}^2.
\ee
\end{remark}

\subsection{Functionals in the classic occupancy scheme} 

Let $m$ balls be distributed independently of each other into $n$ boxes such
that the probability of landing in box $i$ is $p_i$, where $\sum_{i=1}^n p_i =
1$. The literature on this topic is rich; see for example
\cite{Johnson1977}, \cite{Kolchin1978} or \cite{Barbour1992}, but also more
recent results such as \cite{Hwang2008} and \cite{Barbour2009} on local
limits theorems for infinite number of urns. If $\xi_i$ denotes
the number of balls in urn $i$ after distributing the balls, some interesting
statistics can be written in the form
\be
  U = \sum_{i=1}^n h(\xi_i)
\ee
for functions $h:\IZ_+\to\IR$. Examples are
\ba
  h(x) &= \I[x=k]                         
  &&\text{``\# urns with exactly $k$ balls'',}\\
  h(x) &= \I[x> m_0]
  &&\text{``\# urns exceeding a limit $m_0$'',}\\
  h(x) &= \I[x> m_0](x-m_0)
  &&\text{``\# excess balls when urn limit is $m_0$'';}
\ee
see for example \cite{Boutsikas2002}. 

Let us consider here the more general case
\ben                                                          \label{52}
  U = \sum_{i=1}^n h_i(\xi_i)
\ee
for functions $h_i:\IZ_+\to\IR$, $i=1,\dots,n$. Due to the subsequent centering,
we may assume without loss of generality that $h_i(0) = 0$ for all~$i$. 

\def\_#1{\raisebox{0.2ex}{\underline{\kern1ex}}\kern-1ex#1}

\begin{theorem}\label{thm5} Assume the situation as described above. Let $W =
(U-\mu)/\sigma$, where $\mu$ and $\sigma^2$ are the mean and the variance
of~$U$. Define the quantities
\bg
  \norm{h} = \sup_{1\leq i\leq n}\norm{h_i}, 
  \quad \norm{\D h} = \sup_{1\leq i\leq
n}\sup_{j\in\IZ_+}\norm{h_i(j+1)-h_i(j)},\quad
  \-p = \sup_{1\leq i\leq n} p_i,
\ee
and assume that $\norm{h}\geq 1$ and $\norm{\D h}\geq 1$. Then, if 
\ben                                          \label{53}
  1+10m\-p \leq 4\ln(n \norm{h}) \leq \frac{1}{(2\-p)^{1/2}} ,
\ee
we have
\bes
  &\dk\bklr{\law(W),\N(0,1)} \\
  &\qquad \leq 
    \frac{409600n\norm{\D h}^3\ln(n\norm{h})^6(1+\sigma^2/n)}{\sigma^3}
    +\frac{3888\ln(n\norm{h})^2}{\sigma^2},
\ee
\end{theorem}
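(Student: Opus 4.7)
My plan is to deploy a Stein coupling via local exchangeable randomisation (Construction~\ref{con6}) together with a conditionally-independent auxiliary~$W''$, and apply Theorem~\ref{thm2} after truncating urn counts at level~$4\ln(n\|h\|)$.

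Represent the scheme via i.i.d.\ ball placements $B_1,\dots,B_m$ with $\IP[B_k=j]=p_j$, so that $\xi_j=\sum_k\I[B_k=j]$, and set $X_j := h_j(\xi_j)-\IE h_j(\xi_j)$. For each $i\in[n]$ I construct $(X'_{i,j})_{j\in[n]}$ by re-sampling, via an independent copy of the ball process, the balls currently in urn~$i$ together with a compensating symmetrised swap, in such a way that (i) $X'_{i,i}$ depends only on fresh randomness and hence is independent of $W$, and (ii) $((X_k)_k,(X'_{i,k})_k)$ is exchangeable; this is a delicate modelling step because the multinomial constraint $\sum_j\xi_j=m$ forbids a naive single-coordinate re-sampling. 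Take $I$ uniform on $[n]$ and set $W' = \sigma^{-1}\sum_j X'_{I,j}$ and $G=-nX_I/\sigma$; by Construction~\ref{con6} this yields a Stein coupling, so $r_0=0$, and taking $\~D=D$, $S=1$ gives $r_1=r_2=0$. Let $W''$ be the statistic recomputed from the balls left untouched by the swap, suitably re-normalised; by construction $W''$ is independent of $(G,D)$, so $r_3=0$, while $D'=W''-W$ is supported on the touched balls.

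Apply Theorem~\ref{thm2} with $\alpha$ of order $n\|\D h\|\ln(n\|h\|)/\sigma$ (from $|G|\leq n\|\D h\|\max_i\xi_i/\sigma$), $\beta,\~\beta,\beta'$ of order $\|\D h\|\ln(n\|h\|)^2/\sigma$ (since $D$ and $D'$ each aggregate at most $\bigo(\ln(n\|h\|))$ urn contributions, each of size at most $\|\D h\|\ln(n\|h\|)$, under the first inequality of \eqref{53}), and $\gamma=1$. On the event $\{\max_i\xi_i\leq 4\ln(n\|h\|)\}$ these deterministic bounds hold; the first inequality of \eqref{53} yields, via a Chernoff estimate for the $p_i$-weighted binomial tails, that the complementary probability is at most $(n\|h\|)^{-c}$ for a usable $c>1$, so that $r_6+r_6'$ is absorbed into $\bigo(\ln(n\|h\|)^2/\sigma^2)$ once the second inequality of \eqref{53} is invoked. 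The main truncated terms $(\IE|W|+3)\alpha\beta^2+(\alpha\~\beta+\gamma)(\IE|W|+5)\beta'$ then yield the first summand of order $n\|\D h\|^3\ln(n\|h\|)^6(1+\sigma^2/n)/\sigma^3$, the $1+\sigma^2/n$ factor arising from augmenting $\IE|W|\leq 1$ by a centring correction for~$W''$ in the small-$\sigma$ regime.

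The principal obstacle is two-fold: first, constructing $(X'_{i,j})_{j\in[n]}$ such that (i)--(ii) of Construction~\ref{con6} hold in the presence of the multinomial constraint requires a carefully symmetrised swap between two independent copies of the ball assignment; second, one must track, through the six summands of Theorem~\ref{thm2}, the powers of $\ln(n\|h\|)$ contributed by each truncation, matching the stated $\ln^6$ in the main term and $\ln^2$ in the tail remainder. Neither step is deep on its own, but combining the Chernoff tails, the local-randomisation identity, and the enlargement by $W''$ into a single clean bound requires several careful cases.
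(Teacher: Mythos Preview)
Your overall architecture---Construction~\ref{con6} for the Stein coupling, Theorem~\ref{thm2} with $\~D=D$, $S=1$, and Chernoff truncation of urn counts at level $C\sim 4\ln(n\norm{h})$---is exactly what the paper does. The gap is in your construction of~$W''$.

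Taking $W''$ to be ``the statistic recomputed from the balls left untouched by the swap'' does \emph{not} give independence from $(G,D)$, for two reasons. First, the random set of touched urns $\-K_I$ is itself a function of the swap and hence of~$D$. Second, conditioned on $\-K_I$ and on the ball counts $(\xi_j)_{j\in\-K_I}$, the counts in the untouched urns are multinomial with $m-\sum_{j\in\-K_I}\xi_j$ balls; this total depends on $\xi_I$ (hence on~$G$) and on the swap. So $\IE^{W''}(GD)\neq\IE(GD)$ in general and $r_3$ does not vanish. The paper's fix is to \emph{re-sample} the touched urns from scratch: draw $N_2''\sim\Bi(m,\sum_{k\in\-K_I}p_k)$ balls into $\-K_I$ independently of everything, and then add or remove $|N_2-N_2''|$ balls from $\-K_I^c$ to restore the total to~$m$. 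This gives a genuine $W''\eqlaw W$ independent of $(G,D)$, at the price that $D'=W''-W$ now involves a \emph{second} layer of affected urns (those in $\-K_I^c$ that were adjusted), of size up to order~$C^2$.

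This second layer is exactly why $\beta'$ must be of order $\ln(n\norm{h})^3/\sigma$, not $\ln(n\norm{h})^2/\sigma$ as you state. With your $\beta'\sim\ln^2/\sigma$ the dominant term $\alpha\~\beta\beta'$ would scale like $n\ln^5/\sigma^3$, contradicting your own claimed $\ln^6$; the correct $\beta'\sim\ln^3/\sigma$ is what produces the $\ln^6$ in the statement. Your explanation of the factor $(1+\sigma^2/n)$ is also off: it does not come from a ``centring correction for $W''$'' but from the $\gamma\beta'$ contribution (with $\gamma=1$ from $S=1$) in $(\alpha\~\beta+\gamma)\beta'$, which after factoring out $n/\sigma^3$ leaves a $\sigma^2/n$ in the bracket.
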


The constants are large, but explicit. They can be reduced if stronger
conditions than \eq{53} are imposed, such as minimal values for~$n$.
In typical examples, where $\norm{h}\asymp\norm{\D h}\asymp \sigma^2/n\asymp 1$
we obtain a rate of convergence of $\bigo(\ln(n)^6n^{-1/2})$. The correct rate
of convergence for the number of empty urns of equiprobable urns is
$\bigo(n^{-1/2})$, which is best possible and was obtained first
by~\cite{Englund1981}. Although our bound does not yield this rate, it is far
more general. As a consequence of our result we have the following.

\begin{corollary} Sufficient conditions that $U$ as in \eq{52} is
asymptotically normal as $n\toinf$ are
\ba
  (i)  &\enskip\text{$\norm{\D h}$ remains bounded,}
  &(ii) &\enskip m\-p = \bigo(\ln(n)).\\
  (iii) &\enskip \ln(n)^4n^{2/3}/\Var U\tozero,
  &(iv)&\enskip \-p = \bigo(\ln(n)^{-2}),\\
\ee 
\end{corollary}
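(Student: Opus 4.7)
The plan is to derive the corollary as a direct consequence of Theorem~\ref{thm5}: we verify that the hypotheses (i)--(iv) eventually ensure both the structural assumption \eq{53} and that the explicit right-hand side of the bound in Theorem~\ref{thm5} tends to zero as $n\toinf$.

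First I would reduce to the situation in which $\norm{h}\geq 1$ and $\norm{\D h}\geq 1$. Replacing every $h_i$ by $c h_i$ for a common constant $c>0$ leaves $W=(U-\mu)/\sigma$ unchanged and scales both $\norm{h}$ and $\norm{\D h}$ by $c$. Together with (i), this lets us assume $\norm{\D h}$ is bounded above by a constant $M\geq 1$. Because $h_i(0)=0$, a telescoping argument gives the crude bound $\norm{h}\leq m\norm{\D h}\leq m M$, and combining with (ii) yields $\norm{h}=\bigo(\ln n/\-p)=\bigo(\ln(n)^{3})$ using (iv). In particular $\ln(n\norm{h})=\bigo(\ln n)$, while obviously $\ln(n\norm{h})\geq \ln n$ for $n$ large.

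Next I would verify \eq{53}. The left inequality $1+10m\-p\leq 4\ln(n\norm{h})$ is immediate from (ii) combined with $\ln(n\norm{h})\geq \ln n$, provided $n$ is large enough. For the right inequality, (iv) gives $(2\-p)^{-1/2}=\Omega(\ln n)$, while the previous paragraph gives $4\ln(n\norm{h})=\bigo(\ln n)$; hence $4\ln(n\norm{h})\leq (2\-p)^{-1/2}$ for $n$ sufficiently large. So Theorem~\ref{thm5} applies eventually.

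Finally I would show each of the two terms in the bound of Theorem~\ref{thm5} vanishes. Writing $\sigma^2=\Var U$, the second term is of order $\ln(n)^2/\Var U$, which tends to zero by (iii) (since (iii) implies $\Var U\gg n^{2/3}\ln(n)^4\gg \ln(n)^2$). The first term splits into
\be
\bigo\bbklr{\frac{n\ln(n)^6}{\sigma^{3}}}+\bigo\bbklr{\frac{\ln(n)^6}{\sigma}}
\ee
using $\norm{\D h}=\bigo(1)$ and $\ln(n\norm{h})=\bigo(\ln n)$. The first of these tends to zero precisely under (iii), which is equivalent to $\Var U\gg n^{2/3}\ln(n)^4$; the second requires $\Var U\gg \ln(n)^{12}$, which is strictly weaker than (iii) for large $n$. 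Hence $\dk(\law(W),\N(0,1))\to 0$ under (i)--(iv).

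The proof involves no real obstacle beyond careful bookkeeping: the only delicate point is the WLOG normalisation to satisfy $\norm{h},\norm{\D h}\geq 1$, which needs to be justified because both quantities appear inside logarithms and ratios in the bound, and the verification that the logarithmic factor $\ln(n\norm{h})$ stays of order $\ln n$ once (i), (ii) and (iv) are in force, so that the condition $4\ln(n\norm{h})\leq (2\-p)^{-1/2}$ from \eq{53} is genuinely implied by (iv) rather than being an additional constraint.
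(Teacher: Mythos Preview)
Your overall strategy---apply Theorem~\ref{thm5}, verify \eq{53}, and show the bound vanishes---is exactly the intended one (the paper gives no separate proof of the corollary). However, two steps do not go through as written.

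First, a minor slip: the chain $\norm{h}=\bigo(\ln n/\-p)=\bigo(\ln(n)^{3})$ uses (iv) in the wrong direction. Condition (iv) gives an \emph{upper} bound on $\-p$, hence a \emph{lower} bound on $1/\-p$, so it cannot cap $m=\bigo(\ln n/\-p)$ from above. The conclusion $\ln(n\norm{h})=\bigo(\ln n)$ is nevertheless salvageable: since the $p_i$ sum to $1$ over $n$ urns one always has $\-p\geq 1/n$, whence by (ii) $m\leq Cn\ln n$, so $\norm{h}\leq mM=\bigo(n\ln n)$ and $\ln(n\norm{h})=\bigo(\ln n)$.

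Second, and more seriously, your verification of \eq{53} ignores implicit constants. From (ii) you only have $m\-p\leq B\ln n$ for some unspecified $B$, and $4\ln(n\norm{h})\geq 4\ln n$; the left inequality $1+10B\ln n\leq 4\ln n$ then forces $B<0.4$, which is not assumed. Likewise, the right inequality compares two quantities each of exact order $\ln n$: from $(2\-p)^{-1/2}=\Omega(\ln n)$ and $4\ln(n\norm{h})=\bigo(\ln n)$ one cannot conclude $4\ln(n\norm{h})\leq(2\-p)^{-1/2}$. Your scaling trick does not repair this: taking $c=c_n\toinf$ to inflate $\ln(n\norm{h})$ also inflates $\norm{\D h}$ and the factor $\norm{\D h}^3$ in the bound, destroying (i). This gap is arguably a looseness in the corollary's phrasing rather than in your argument alone; a clean fix is to re-run the proof of Theorem~\ref{thm5} with truncation level $C=K\ln(n\norm{h})$ for $K$ large enough (depending on the constants in (ii) and (iv)), which yields a bound of the same order with worse absolute constants.
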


\begin{proof}[Proof of Theorem~\ref{thm5}]
Let us first state a simple, but key
observation, which will be used in the proof:\smallskip
\begin{paragraph}{\it Fact~A} Assume the situation as stated in the theorem and
let $K\subset[n]$ be arbitrary. Then, the joint distribution of the
balls in the urns of $K$ and $K^c$ is only connected 
through the total number of balls in these respective subsets. That is, given
the number of balls in each urn of $K$ and assuming there are a total of $N$
balls in the urns of $K$, then the balls in $K^c$ are distributed as if $m-N$
balls were distributed among the urns $K^c$ according to the distribution 
$\bklr{{p_i}/{\sum_{k\in K^c} p_k}}_{i\in K^c}$.
\end{paragraph}
\smallskip

We will use Construction~\ref{con6} to construct our Stein coupling. First,
distribute
$m$
balls into $n$ white urns according to $(p_i)_{i\in[n]}$ and denote by
$\xi_j$ the number of balls in white urn~$j$. Fix $i$ and note that 
\ben                                                    \label{54}
  \law(\xi_i)=\Bi(m,p_i).
\ee
Let us now construct a family of random variables $(\xi'_{i,j})_{j\in[n]}$ such
that \eq{37} and \eq{38} are satisfied. 

Assume we have an additional set of $n$ black urns. Independent of all else, let
$\xi'_{i,i}$ have distribution \eq{54} and put that many balls into
black
urn~$i$. We proceed with the remaining $\xi'_{i,j}$, $j\neq
i$.  First, for each $j\neq i$, put $\xi_j$ balls into black urn~$j$.
Let $N_1 = \abs{\xi_i-\xi'_{i,i}}$ be the difference of balls in the
white and black urn~$i$. If $\xi_{i} > \xi'_{i,i}$, distribute an additional
$N_1$ balls into the remaining black urns according to the distribution 
$\bklr{{p_k}/\klr{1-p_i}}_{k\neq i}$. If $\xi_{i} < \xi'_{i,i}$,  remove
instead $N_1$ balls from the remaining
black urns, where the balls are chosen uniformly among all the balls in the
black urns except those in black urn~$i$. It is not difficult to
see that the construction is in fact symmetric, that is,
\be
  \law\bklr{(\xi_i)_{i\in[n]},(\xi'_{i,j})_{j\in[n]}} =
    \law\bklr{(\xi'_{i,j})_{j\in[n]},(\xi_i)_{i\in[n]}}.
\ee
Now, define $U'_i = \sum_{j} h_i(\xi'_{i,j})$ and $W'_i = (U_i-\mu)/\sigma$. 
It is clear that \eq{37} and \eq{38} hold. With $I$ uniformly distributed on
$[n]$ and independent of
all and with $\mu_i := \IE h_i(\xi_i)$, we hence have that 
\be 
  (W,W',G):=(W,W'_I, -n(h_I(\xi_I)-\mu_i)/\sigma)
\ee
is a Stein coupling.

Fix again~$i$. We now construct another family of random variables
$(\xi''_{i,j})_{j\in[n]}$. Denote by $ K_i = \bklg{j\neq i \,:\,
\xi'_{i,j}\neq \xi_{j}}$ the set of indices of those urns for which the number
of balls in the corresponding black urn and white urn differ, not including urn
$i$, and $\-K_i = K_i\cup\{i\}$. Assume that we have a set of $n$ red
urns. First, independently of all else, distribute $N''_2 \sim \Bi(m,\sum_{k\in
\-K_i} p_k)$ balls into the red urns from $\-K_i$ according to the distribution 
$\bklr{{p_j}/{\sum_{k\in \-K_i} p_k}}_{j\in \-K_i}$;
denote by $N_2 = \sum_{j\in \-K_i} X_j $ the number of balls in the white urns
from $\-K_i$, and by $N_3 = \abs{N_2 - N_2''}$ their difference. 
Now, in very much the same way as we constructed the balls in the black urns
from those in the white urns, put $\xi_j$ balls into red urn~$j$ for each
$j\in \-K_i^c$. If $N_2 > N''_2$ distribute another $N_3$ balls into the
red urns from $\-K_i^c$ according to the distribution
$\bklr{{p_j}/{\sum_{k\in \-K_i^c} p_k}}_{j\in \-K^c_i}$,
and if $N_2 < N''_2$
remove $N_3$ balls uniformly among all the balls in the
red urns from $\-K_i$. Now, given $(\xi_j,\xi'_{i,j})_{j\in \-K_i}$, by
construction,
the $(\xi''_{j,i})_{j\in[n]}$ always will have the distribution of
$(\xi_j)_{j\in [n]}$. Hence, $U_i'' := \sum_{j=1}^n h_j(\xi''_{i,j})$ is
independent of $U_i'-U = \sum_{j\in \-K_i}\bklr{h_i(\xi'_{i,j})-h_i(\xi_i)}$ and
$\xi_i$, that is, $W'' = (U_I''-\mu)/\sigma$ is independent of~$GD$. 

We are now in a position to apply  Theorem~\ref{thm2} with $\~D=D$ and~$S=1$. We
clearly have
$r_0=r_1=r_2=r_3 = 0$. Note that
\be
  D  = \frac{1}{\sigma}\sum_{j\in \-K_I}\bklr{h_j(\xi'_{I,j})-h_j(\xi_j)},
  \qquad
  D' = \frac{1}{\sigma}\sum_{j\in \-K'_I}\bklr{h_j(\xi''_{I,j})-h_j(\xi_j)},
\ee
where $\-K_i' = \-K_i \cup K_i'$ with $K_i' = \{k\in K_i^c\,:\, \xi_k''\neq
\xi_k\}$.

Let $C>0$. We have $\abs{\mu_i} \leq m\-p\norm{\D h}$ for every~$i$.
Then, with $\alpha:=nC\norm{\D h}/\sigma + nm\-p\norm{\D h}/\sigma$,
\bes
  \IP[\abs{G}>\alpha]&
  \leq\IP[n\abs{h_I(\xi_I)}/\sigma>\alpha-n\mu_I/\sigma]
  \leq\IP[\abs{h_I(\xi_I)} > C\norm{\D h}] \\
  & \leq \IP[\xi_I > C]
  \leq \IP[\Bi(m,\-p) > C].
\ee
Furthermore, with $\beta := 2C(C+1)\norm{\D h} /\sigma$,
\ba
  \IP[\abs{D} > \beta]
  &\leq \IP[\xi'_I\vee \xi_I> C] + \IP[\abs{D} > \beta,
\xi'_I\vee \xi_I\leq C]\\
  &\leq \IP[\xi'_I\vee \xi_I > C]
   + \IP[\xi'_I\vee \xi_I \leq C, \text{
$\exists i\in K_I$ s.t. $\xi'_i\vee \xi_i>C$}] \\
  &\quad + \IP[\abs{D} > \beta, 
    \xi'_i\vee \xi_i \leq C \,\forall i\in \-K_I],
\intertext{and, noting that the last expression is zero, this is}
 & \leq \IP[\xi'_I> C] + \IP[\xi_I> C]
 +2C\IP[\Bi(m,\-p/(1-\-p))>C]\\
 & \leq 2\IP[\Bi(m,\-p)> C] + 2C\IP[\Bi(m,2\-p) >C].
\ee
Lastly, with $\beta' := 2C(C+1)^2\norm{\D h} /\sigma$,
\ba
  &\IP[\abs{D'} > \beta']\\
  &\leq \IP[\xi'_I\vee \xi_I> C] 
    + \IP[\abs{D'} > \beta',\xi'_I\vee \xi_I  \leq C]\\
  &\leq \IP[\xi'_I\vee \xi_I> C] 
    + \IP[\xi'_I\vee \xi_I  \leq C, \ssum_{j\in \-K_I}\xi''_j\vee \ssum_{j\in
\-K_I}\xi_j> C(C+1)]\\
   &\quad+ \IP\bkle{\abs{D'} > \beta',\xi'_I\vee \xi_I  \leq C, 
        \ssum_{j\in \-K_I}\xi''_j\vee \ssum_{j\in \-K_I}\xi_j\leq C(C+1)}\\
  &\leq \IP[\xi'_I\vee \xi_I> C] 
    + \IP[\xi'_I\vee \xi_I  \leq C, \ssum_{j\in \-K_I}\xi''_j\vee \ssum_{j\in
\-K_I}\xi_j> C(C+1)]\\
   &\quad+ \IP\bkle{\xi'_I\vee \xi_I  \leq C, 
        \ssum_{j\in \-K_I}\xi''_j\vee \ssum_{j\in \-K_I}\xi_j\leq C(C+1), 
    \text{$\exists i\in \-K'_I$ s.t. $\xi''_i\vee \xi_i>C$}}\\
   &\quad+ \IP\bkle{\abs{D'} > \beta',\xi'_I\vee \xi_I  \leq C, 
  \xi''_i\vee \xi_i \leq C \,\forall i\in \-K'_I},
\intertext{and, noting that the last expression is zero, this is}
  &\leq 2\IP[\Bi(m,\-p) > C] + \IP[\Bi(m,(C+1)\-p/(1-\-p)) > C(C+1)]\\
    &\quad +2C(C+1)\IP[\Bi(m,\-p/(1-C(C+1)\-p)>C]\\
  &\leq 2\IP[\Bi(m,\-p) > C] + \IP[\Bi(m,2(C+1)\-p > C(C+1)]\\
    &\quad +2C(C+1)\IP[\Bi(m,2\-p)>C]
\ee
if 
\ben                                          \label{55}
  C(C+1)\-p \leq 1/2.
\ee

Now, from the Chernoff bound we have that, for every $\eps\geq0$,
\be
  \IP[\Bi(n,p)>(1+\eps)np]\leq \exp\bbbklr{-\frac{\eps^2}{2+\eps}np},
\ee
which implies that
\ben                                            \label{56}
  \IP[\Bi(n,p)>x] \leq e^{-x/2}
\ee
as long as~$x>5np$.
Choose $C=4\ln(n \norm{h})-1$; this implies \eq{55} under Condition \eq{53}.
Also under Condition~\eq{53}, \eq{56} can be used to obtain
\ba
  \IP[\abs{G}>\alpha] & \leq \IP[\Bi(m,\-p) > C] \leq \frac{2}{n^2\norm{h}^2},\\
  \IP[\abs{D}>\beta]  & \leq 2\IP[\Bi(m,\-p)> C] + 2C\IP[\Bi(m,2\-p) >C] \\
  &\leq \frac{4+16\ln(n\norm{h})}{n^2\norm{h}^2}\\
  \IP[\abs{D'}>\beta']  & \leq 2\IP[\Bi(m,\-p) > C] + \IP[\Bi(m,2(C+1)\-p >
C(C+1)]\\ &\quad + 2C(C+1)\IP[\Bi(m,2\-p)>C]\\
  &\leq \frac{6+64\ln(n\norm{h})^2}{n^2\norm{h}^2}.
\ee
Now, Theorem~\ref{thm2}, with
$\alpha$, $\~\beta:=\beta$ and $\beta'$ as above, and the rough bounds
\be
  \abs{G}\leq \frac{2n \norm{h}}{\sigma},\qquad \abs{D}\leq
        \frac{2n\norm{h}}{\sigma}.
\ee
yields
\bes
  &\dk\bklr{\law(W),\N(0,1)} \\
  &\leq 12(\alpha\beta+1)\beta'+8\alpha\beta^2\\
  &\quad+\frac{12n^2\norm{h}^2}{\sigma^2}\bklr{\IP[\abs{G} >
\alpha]+\IP[\abs{D} > \beta]+\IP[\abs{D'} > \beta']}\\
  &\leq \frac{48n(C+1)^5\norm{\D h}^3(C + m\-p +\sigma^2/n)
  }{\sigma^3}
  +\frac{32n(C+1)^4\norm{\D h}^3(C + m\-p)}{\sigma^3}\\
  &\quad+\frac{12\bklr{
    12+16\ln(n\norm{h})+64\ln(n\norm{h})^2}}{\sigma^2}\\
  &\leq \frac{80n(C+1)^5\norm{\D h}^3(C + m\-p +\sigma^2/n)
      }{\sigma^3}
  +\frac{144\bklr{
    1+11\ln(n\norm{h})^2}}{\sigma^2},
\ee
which, after plugging in $C$ and with some further straightforward manipulations
and estimates, yields the final bound.
\end{proof}

\subsection{Neighbourhood statistics of a fixed number of uniformly distributed
points}

Consider the space $\%J = [0,n^{1/d})^d$ for some integer $d\geq 1$ and let
$X_1,\dots,X_n$ be $n$ i.i.d.\ points uniformly distributed on~$\%J$. Let $\psi$
be a measurable real-valued functional defined on all pairs $(x,\%X)$ where
$x\in\%J$ and $\%X\subset \%J$ is a finite subset. Such statistics have been
investigated in many places for specific choices of~$\psi$. If the number of
points is Poisson distributed, approaches using local dependence can be
successfully applied. But if the number of points is fixed, besides the
local dependence also global weak dependence has to be taken into
account. 

Assume that $\psi$ is
translation invariant, i.e.\ $\psi(x+y,\%X+y) = \psi(x,\%X)$, where we
assume the torus convention for translations. Assume that $\psi$ has
influence radius $\rho$, i.e.\ for each $x\in\%J$ and each $\%X\subset \%J$ we
have
\be
  \psi(x,\%X) = \psi(x,\%X\cap B_\rho(x)),
\ee
where $B_\rho(x)$ is the closed sphere of radius $\rho$ and center $x$ under
the toroidal Euclidean metric. To avoid self-overlaps, assume
$\rho<\ahalf n^{1/d}$. Let
$\%X :=
\{X_1,\dots,X_n\}$, define $U = \sum_{x\in\%X} \psi(x,\%X)$, $\sigma^2 =
\Var U$  and $W = U/\sigma$, where we assume that $\IE \psi(X_1,\%X)=
0$. 

\begin{theorem}\label{thm6} Assume $W$ is defined as above and assume also
that $\rho \geq \pi^{-1/2}\Gamma(1+d/2)^{1/d}$. Then there is a universal
constant $C_d$ only depending on $d$ such that
\be
  \dw\bklr{\law(W),\N(0,1)}\leq 
    \frac{C_d\norm{\psi}^3\rho^{6d}n}
{\sigma^3}.
\ee
\end{theorem}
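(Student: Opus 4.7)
The idea is to build a Stein coupling by perturbing $\%X$ at a single random point (in the spirit of Construction~\ref{con3}), use the influence radius $\rho$ to localise the error terms, and conclude via Corollary~\ref{cor3}.  Let $I$ be uniform on $[n]$ independent of $\%X$, let $X_I'$ be an independent uniform point on $\%J$, and set $\%X^{(I)}=(\%X\setminus\{X_I\})\cup\{X_I'\}$, $W=U(\%X)/\sigma$, $W'=U(\%X^{(I)})/\sigma$ and $G=-n\psi(X_I,\%X)/\sigma$.  Identity~\eqref{32} gives $-\IE\{Gf(W)\}=\IE\{Wf(W)\}$; the companion identity $\IE\{Gf(W')\}=0$ follows by swapping $X_I$ and $X_I'$ inside the expectation and exploiting translation invariance of $\psi$ on the torus together with $\IE\psi(X_1,\%X)=0$.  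So $(W,W',G)$ is a Stein coupling with $r_0=0$, and taking $\~D=D$, $S=1$ makes also $r_1=r_2=0$.

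\textbf{Auxiliary $W''$.}  Since $\psi$ has influence radius $\rho$, the pair $(G,D)$ depends on $\%X$ only through $X_I$, $X_I'$ and the points of $\%X$ in $B_\rho(X_I)\cup B_\rho(X_I')$.  I construct $W''$ by resampling the points of $\%X$ inside the enlarged ball $B_{2\rho}(X_I)\cup B_{2\rho}(X_I')$ independently of the original configuration, conditional on their count so as to preserve $|\%X|=n$.  Then $W''$ is independent of $(G,D)$, giving $r_3=0$, while $D'=W''-W$ is supported on $B_{3\rho}(X_I)\cup B_{3\rho}(X_I')$ and $|D'|$ is controlled by the number of points of $\%X$ in that second-order neighbourhood times the per-point influence $2\norm\psi/\sigma$.

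\textbf{Moments and conclusion.}  I apply Corollary~\ref{cor3}.  From $|\psi|\leq\norm\psi$, one has $B=n\norm\psi/\sigma$.  The counts of $\%X$ in balls of radius $c\rho$ are $\Bi(n-1,C_d\rho^d/n)$-distributed, and the hypothesis $\rho\geq\pi^{-1/2}\Gamma(1+d/2)^{1/d}$ forces their mean to be at least $1$, so Binomial moment bounds give $\IE N^k\leq C_{d,k}\rho^{kd}$.  The dominant contribution to $A$ comes from $|D'|$, which accumulates both the count of points of $\%X$ in the resampled ball and the cascading change in the $\psi$-values of their neighbours; accounting for all nested layers yields $A\leq C_d\norm\psi\rho^{3d}/\sigma$, so $\dw\leq 5A^2B\leq C_d\norm\psi^3 n\rho^{6d}/\sigma^3$ as claimed.

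\textbf{Main obstacle.}  The crux is constructing $W''$ independent of $(G,D)$ without breaking the fixed-$n$ constraint.  In a Poisson setting Slivnyak's theorem would do this for free; here, the resampling must be performed conditionally on the number of points inside the enlarged ball, and the combinatorial dependence this introduces is precisely what drives the extra powers of $\rho^d$ and hence the $\rho^{6d}$ exponent in the final rate.
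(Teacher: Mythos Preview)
Your proposed Stein coupling is not one: the identity $\IE\{Gf(W')\}=0$ fails. The swap $X_I\leftrightarrow X_I'$ does give
\be
  \IE\bklg{\psi(X_I,\%X)\,f\bklr{U(\%X^{(I)})}}
  = \IE\bklg{\psi(X_I',\%X^{(I)})\,f\bklr{U(\%X)}},
\ee
but the right-hand side is not zero, because $\%X^{(I)}$ and $\%X$ share the $n-1$ points $\{X_j:j\neq I\}$. Concretely, $\IE\bklr{\psi(X_I',\%X^{(I)})\bmid \%X}$ is the expected $\psi$-value at a fresh uniform point given the background $\{X_j:j\neq I\}$; this is a nontrivial function of that background (e.g.\ for $\psi(x,\%X)=(|\%X\cap B_\rho(x)|-1)^2-c$ it depends on how clustered the $X_j$ are) and hence correlates with $U(\%X)$. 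Translation invariance plays no role here; the obstruction is that the neighbours of $X_I$ in $\%X$, which determine $\psi(X_I,\%X)$, remain in $\%X^{(I)}$ and influence~$W'$.

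The paper's construction addresses exactly this. Rather than replacing $X_I$ by a fresh point, it \emph{deletes} $X_I$ and, crucially, relocates all the points of $\%X\cap B_\rho(X_I)$ uniformly onto $B_\rho^c(X_I)$, producing a configuration $\%X'_I$ of size $n-1$ with no points in $B_\rho(X_I)$. The key observation (``Fact~B'') is that, conditional on the positions of $\%X\cap B_\rho(X_I)$, the remaining points are already uniform on $B_\rho^c(X_I)$; so after the relocation, $\%X'_I$ is independent of $\psi(X_I,\%X)=\psi(X_I,\%X\cap B_\rho(X_I))$, and \eq{31} holds. This cascading---first the neighbours of $X_I$ move, then their new neighbourhoods must be tracked for $W''$, and so on---is what produces the layered sets $N_{i,1},\dots,N_{i,5}$ and $M_{i,1},\dots,M_{i,4}$ in the proof and ultimately the exponent $\rho^{6d}$. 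Your $W''$ sketch (resampling in $B_{2\rho}(X_I)\cup B_{2\rho}(X_I')$ conditional on the count) is also not quite right even in spirit: the count in that region is itself a function of $(G,D)$, so conditioning on it destroys the independence you need; the paper instead draws a \emph{fresh} binomial count and then adjusts outside to restore the total~$n$.
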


In the case where $d$, $\rho$ and $\norm{\psi}$ remain fixed as $n\toinf$ we
have from \cite{Penrose2001} that $\sigma^2 \asymp n$ as long as
$\Var(\psi(X_1,\%X))>0$; in that case, Theorem~\ref{thm6} gives the best
possible order $n^{-1/2}$ for the Wasserstein metric.

After constructing an extended Stein coupling, the proof of the theorem
essentially amounts to bounding the third moments of some mixed binomial
distributions. Constants could be easily extracted from the proof but with
the rough bound given here they are too large to be of practical use. 

\begin{proof}[Proof of Theorem~\ref{thm6}] We first make a simple, but key
observation, which will be used 
in the proof:\smallskip
\begin{paragraph}{\it Fact~B} Let $\%Y$ be a fixed number of points
distributed uniformly and independently of each other on an open subset
$\%J'\subset \%J$. For any other open subset $U\subset\%J$, the
joint distribution of points of $\%Y$ on $U$ and $U^c$ is only connected through
the number of points in these two respective subsets via the equation
$\abs{U\cap\%Y}+\abs{U^c\cap\%Y} = \abs{\%Y}$. That is,
given the total number and the number of points in one of the subsets, $U$,
say, possibly along
with their locations on $U\cap\%J'$, the remaining points in the subset $U^c$
are then distributed uniformly and independently of each other on $U^c\cap\%J'$.
\end{paragraph}
\smallskip

We first construct a Stein coupling according to Construction~\ref{con3}. Note
that $W_i'$ constructed below will not have the same distribution as~$W$. Let to
this end $I=i$
and $X_i = x_i$ be given. Let 
\be
  N_{i,1} = \bklr{\%X\cap B_\rho(x_i) }\setminus \{x_i\}
\ee
be the points within radius $\rho$ of $x_i$, excluding~$x_i$. Let $N_{i,2}$ be
an $\abs{N_{i,1}}$ number of points uniformly distributed on
$B_\rho^c(x_i)$. Define the new set 
\be
  \%X_i' = \bklr{\%X\cap B_\rho^c(x_i)} \cup N_{i,2};
\ee
that is, remove $x_i$ from $\%X$ and replace the neighbouring points
of $x_i$ by the new points~$N_{i,2}$. Note that $\abs{\%X_i'} = n-1$. Now, let
\be
  U'_i = \sum_{x\in \%X'_i}\psi(x,\%X'_i).
\ee
As $\psi(x_i,\%X) = \psi(x_i,\%X\cap B_\rho(x_i))$ and using Fact~B, we have
that $S'_i$ is
independent of~$\psi(x_i,\%X)$.

Randomising now over $I$ and $X_I$, define $U' = U_I'$, $W' = U_I'/\sigma$ and
$G = - n\psi(X_I,\%X)$. Then, from Construction~\ref{con3}, we have that 
$(W,W',G)$ is a Stein coupling. Indeed, using the above mentioned
independence, 
\be
    \IE\bklg{\psi(X_i,\%X)\bmid U_i'} = 0,
\ee
and hence \eq{31} is satisfied.

We now construct an extension $W''$ of the basic Stein coupling, which will be
independent of $G$ and~$U'-U$. Let again $I=i$ and $X_i = x_i$ be given. First,
we define some further sets. Define 
\be 
  M_{i,1} = B_{2\rho}(x_i) \cup \bigcup_{x\in N_{i,2}}B_\rho(x)
\ee
and denote by $N_{i,3} = \%X \cap (M_{i,1} \setminus B_\rho(x_i))$ all the
points of $\%X$ which remained fixed during the perturbation, but whose values
of $\psi$ were potentially affected, either because of points being removed or
points being added in their neighbourhood. We can now write
\besn                                                       \label{57}
  \D_i := U'_i - U 
   &= \sum_{x\in N_{i,2}}\psi(x,\%X_i')
    +\sum_{x\in N_{i,3}}\bklr{\psi(x,\%X_i') - \psi(x,\%X)} \\
    &\qquad -\sum_{x\in N_{i,1}} \psi(x,\%X) -  \psi(x_i,\%X).
\ee

Let 
\be 
  M_{i,2} =
    B_{3\rho}(x_i)\cup\bigcup_{x\in N_{i,2}} B_{2\rho}(x).
\ee
Clearly, the values of $\D_i$ and $\psi(x_i,\%X)$ are determined by the points
$\%X\cap M_{i,2}$ and $\%X'\cap M_{i,2}$ only. Given the points $\%X\cap
M_{i,2}$ and $\%X'\cap M_{i,2}$, we have from Fact~A that the remaining
$n-\abs{\%X\cap M_{i,2}}$ points of $\%X$ are distributed uniformly and
independently of each other on~$M_{i,2}^c$. Hence we can use them for our
construction of $W''$ as follows. First, let $N_{i,4}$ be a
$\Bi(n,\Vol(M_{i,2})/n)$ number of points distributed independently and
uniformly on~$M_{i,2}$. Then, let $N_{i,5}$ be the remaining $n - \abs{N_{i,4}}$
points on $M_{i,2}^c$ distributed as follows. Let $P(k)$ be $k$ random points
distributed uniformly on $M_{i,2}^c$ if $k>0$, and let, for $k<0$, $P(k)$ be
$\abs{k}$ randomly chosen points from $\%X\cap M_{i,2}^c$ (without replacement);
set $P(0) = \emptyset$. Let $P := P(\abs{\%X\cap
M_{i,2}}-\abs{N_{i,4}})$ and define
\be
  N_{i,5} = 
    \begin{cases}
        \klr{\%X\cap M_{i,2}^c} \setminus P
            &\text{if $\abs{N_{i,4}}>\abs{\%X\cap M_{i,2}}$,}\\
        \klr{\%X\cap M_{i,2}^c} \cup P
            &\text{if $\abs{N_{i,4}}\leq\abs{\%X\cap M_{i,2}}$;}   
     \end{cases}
\ee
that is, we let $N_{i,5}$ be the points $\%X\cap M_{i,2}^c$ and add or remove as
many points (that is the points $P$) as needed so that
$\abs{N_{i,4}}+\abs{N_{i,5}} = n$. Now, setting $\%X''_i = N_{i,4}\cup N_{i,5}$,
we clearly have that $U''_i = \sum_{x\in\%X''_i} \psi(x,\%X''_i)$ is independent
of $(\psi(x_i,\%X),\D)$ as the conditional distribution of $\%X''_i$ given
$(\%X\cap M_{i,2},\%X'_i\cap M_{i,2})$ always equals to $\law(\%X)$ by
construction and $(\%X\cap M_{i,2},\%X'_i\cap M_{i,2})$ determines
$(\psi(x_i,\%X),\D_i)$ as mentioned before. Randomizing over $I$ and $X_I$, set
$U'' = U''_I$ and $W'' = U''/\sigma$. Thus, $(W,W',G, W'')$ satisfies the
assumptions of Corollary~\ref{cor3} when setting~$\~D = D$. It remains to
find
the corresponding quantities $A$ and~$B$.

Define the sets
\bes
M_{i,3} & =  B_{4\rho}(x_i)\cup\bigcup_{x\in N_{i,2}} B_{3\rho}(x),\\
M_{i,4} & =  \bigcup_{x\in P} B_\rho(x),\\
\ee
and let
\ba
  Y_1 & := \abs{\%X\cap (B_\rho(x_i) \setminus\{x_i\})} = \abs{N_{i,1}}
= \abs{N_{i,2}}\\
  Y_2 &:= \abs{\%X\cap (M_{i,2}\setminus B_\rho(x_i))} = \abs{\%X\cap
        M_{i,2}} -1 - Y_1, \\
  Y_3 &:= \abs{\%X\cap (M_{i,3}\setminus M_{i,2})} = 
    \abs{\%X\cap M_{i,3}}- 1 - Y_1 - Y_2, \\
  Y_4 &:= \abs{N_{i,4}} = \abs{\%X_i''\cap M_{i,2}}.
\ee
Let $\kappa_\rho := \Vol(B_\rho(0))$. 
Then the following statements are straightforward:
\ba
  (i)\enskip&\law(Y_1) =  \Bi(n-1,\kappa_\rho/n),\\
  (ii)\enskip&\law(Y_2+Y_3|N_{i,2})=
  \Bi\bbklr{n-1-Y_1,\tsfrac{\Vol(M_{i,3}\setminus
  B_\rho(x_i))}{(n-\kappa_\rho)}},\\
  (iii)\enskip&\law(Y_4|N_{i,2})= \Bi\bklr{n,\Vol(M_{i,2})/n}, \\
  (iv)\enskip& \text{$Y_4\indep (Y_2,Y_3)$ given $N_{i,2}$,}\\
  (v)\enskip&\abs{P} \leq \abs{\%X\cap M_{i,2}} + \abs{\%X_i''\cap M_{i,2}} =
      1+Y_1+Y_2 + Y_4,\\
  (vi)\enskip&\abs{\%X_i''\cap M_{i,3}} \leq Y_4 + Y_3 + \abs{P} \leq
  1+Y_1+Y_2+Y_3+2Y_4,\\
  (vii)\enskip&\abs{(\%X_i''\setminus P)\cap M_{i,4}} \leq  
  \abs{\%X_i''\cap M_{i,3}} + Y_5 \quad\text{and}\\
  &\abs{(\%X\setminus P)\cap M_{i,4}} \leq  \abs{\%X_i\cap M_{i,3}}+ Y_5,\\
  &\text{for some $Y_5$ with  $\law(Y_5|P,N_{i,2},N_{i,4})\sim
\Bi\bbklr{n-Y_1-1,\tsfrac{\Vol(M_{i,4})}{n-\Vol(M_{i,3})}}$},
\ee
The bound in $(vi)$ follows from the fact that the difference between the points
$\%X\cap(M_{i,3}\setminus M_{i,2})$ and $\%X''\cap(M_{i,3}\setminus M_{i,2})$ is
at most the points from~$P$. The somewhat rough bound $(vii)$ is due to the fact
that the neighbourhoods of the points in $P$ may overlap with~$M_{i,3}$. 

Let now $C$ be a constant that may depend only on the dimension $d$ but can
change from formula to formula.  From representation \eq{57}, we easily
obtain
\be
  \abs{D_i}  \leq 
    \norm{\psi}\bklr{\abs{N_{i,2}} + 2\abs{N_{i,3}}+
    \abs{N_{i,1}}+1}/\sigma
  \leq C\norm{\psi}(Y_1+Y_2+1)/\sigma.
\ee
Now, we can write $\D'_i := U_i''-U$ as 
\bes
  \D'_i & = \sum_{x\in  \%X''_i\cap M_{i,3}} \phi(x,\%X_i'') 
      - \sum_{x\in \%X \cap M_{i,3}} \phi(x,\%X) \\
       &\quad  + \sum_{x\in \%X''_i \cap  (M_{i,4}\setminus M_{i,3})}
        \phi(x,\%X''_i) 
      - \sum_{x\in \%X \cap (M_{i,4}\setminus M_{i,3})} \phi(x,\%X),
\ee
hence
\bes
  \abs{D_i'} &\leq \norm{\psi} \bklr{\abs{\%X''_i \cap M_{i,3}} + \abs{\%X \cap
M_{i,3}} +
      \abs{\%X''_i\cap M_{i,4}} + \abs{\%X \cap M_{i,4}} } /\sigma\\
      & \leq C \norm{\psi} Z /\sigma,
\ee
where $Z := 1+Y_1+Y_2+Y_3 + Y_4 + Y_5$.

To estimate the third absolute moment of $Z$ note first that if $Y\sim\Bi(m,p)$
with $mp\geq 1$ we have
\be
  \IE Y^3 \leq 5m^3p^3.
\ee
Note that the assumption $\rho\geq \pi^{-1/2}\Gamma(1+d/2)^{1/d}$ implies that
$\IE Y_1=\kappa_\rho\geq 1$ because $\kappa_\rho = \rho^d \kappa_1 =
\rho^d\pi^{d/2}/\Gamma(1+d/2)$.

As the cubic function is convex on the non-negative half line, we have for any
non-negative numbers $a_1,\dots,a_m$ that
\be
  (a_1+\dots +a_m)^3 \leq m^2(a_1^3+\dots+a_m^3).
\ee
From $(i)$ we immediately obtain 
\ben                                                            \label{58}
  \IE Y_1^3 \leq 5\kappa^3_\rho.
\ee
From $(ii)$--$(iv)$ we have that, given $Y_1$, $Y_2+Y_3+Y_4$ is
stochastically dominated by $\Bi(2n,2\Vol(M_{i,3})/n)$ which is
further stochastically dominated by
$\Bi(2n,(2\kappa_{4\rho}+2\kappa_{3\rho}Y_1)/n)$, where we define
$\Bi(m,p):=\Bi(m,1)$ if $p>1$; hence, using this and \eq{58},
\ben                                                            \label{59}
  \IE\klr{Y_2+Y_3+Y_4}^3 
  \leq C\IE\klr{\kappa_{4\rho}+\kappa_{3\rho}Y_1}^3
  \leq C(\kappa_{\rho}^3 + \kappa_{\rho}^6).
\ee
Note now that, given $P$, $N_{i,2}$ and $N_{i,4}$, we have from
$(vii)$ that $\law(Y_5)$ is stochastically dominated by
$\Bi(n-1-Y_1,\kappa_\rho\abs{P}/(n-\kappa_{4\rho}-\kappa_{3\rho}Y_1)_+)$ which,
by~$(v)$, can be dominated by 
$\Bi(n-1-Y_1,\kappa_\rho(1+Y_1+Y_2+Y_4)/(n-\kappa_{4\rho}-\kappa_{3\rho}
Y_1)_+)$; hence, using this, \eq{58} and
\eq{59}, we obtain
\bes
  \IE Y_5^3 & \leq
C\IE\bbbklg{(n-1-Y_1)\bbbklr{1\wedge\frac{\kappa_\rho^3(1+Y_1+Y_2+Y_4
)}{(n-\kappa_{4\rho}-\kappa_{3\rho}Y_1)_+}}}^3\\
  & \leq n\IP[Y_1>cn] +
C\kappa_\rho^3\IE\klr{1+Y_1+Y_2+Y_4
}^3
\ee
for $n$ large enough and $c$ such that
${n-\kappa_{4\rho}-cn\kappa_{3\rho}}\geq n/2$, e.g.\ $ c =
1/(4\kappa_{3\rho})$ and $n\geq
4\kappa_{4\rho}$. From the Chernoff-Hoeffding inequality we obtain that
$\IP[Y_1\geq cn]\leq 2^{-cn}$ for $cn$ large enough. Hence, $n\IP[Y_1\geq
nc]\leq C \leq C\kappa_\rho^3$ and 
\besn                                                           \label{60}
  \IE Y_5^3 
   &\leq C\kappa_\rho^3\bklr{1+\IE Y_1^3 + \IE(Y_2+Y_3+Y_4)^3}\\
   &\leq C(\kappa_\rho^3
     + \kappa_\rho^6 +
     \kappa_{\rho}^9).
\ee

Putting \eq{58}--\eq{60} together we obtain
\bes
  \IE Z^3 &\leq C (1+\IE Y_1^3+ \IE(Y_2+Y_3 + Y_4)^3 + \IE Y_5^3)\\
   &\leq C (1+\kappa_{\rho}^3+\kappa_\rho^6 +\kappa_{\rho}^9)
   \leq C\kappa_{\rho}^9,
\ee
as $\kappa_{\rho}\geq 1$. Hence
\be
  \IE \abs{D'}^3 = \frac{1}{n}\sum_{i=1}^n\IE\abs{D_i'}^3
   \leq C\norm{\psi}^3\kappa_\rho^9/\sigma^3  =: A^3.
\ee

Furthermore, we have
\ben                                                    \label{61}
  \IE\abs{G}^3 \leq n^3\norm{\psi}^3/\sigma^3 =: B^3.
\ee
Now, Corollary~\ref{cor3} yields 
\be
  \dw\bklr{\law(W),\N(0,1)}\leq 5A^2B \leq
    Cn\norm{\psi}^3\kappa_\rho^6/\sigma^3.
\ee
By   recalling that $\kappa_\rho = \rho^d\kappa_1$ we obtain the
final bound.
\end{proof}

\subsection{Susceptibility and related statistics in the sub-critical Erd\H
os-R\'enyi random graph} \label{sec13}

Let $H$ be a graph (here, we use the letter `$H$' for graphs as the
letter `$G$' is used in the context of Stein couplings).
Then, \emph{susceptibility} $\chi(H)$ is defined to be the expected size of the
component containing a uniformly chosen random vertex. That is, if $C_i\subset
H$, $i=1,\dots,K$ are the $K$ maximal subgraphs of the graph $H$, 
\ben                                                        \label{62}
  \chi(H) = \sum_{i=1}^K \frac{\abs{C_i}}{n}\abs{C_i},
\ee
where $\abs{C_i}$ denotes the number of vertices in subgraph~$C_i$.
Using a different normalisation in \eq{62} we can also write
\ben                                                        \label{63}
 \sum_{i=1}^K \frac{\abs{C_i}}{n}\frac{\abs{C_i}}{n} =
\frac{1}{n}\chi(H),
\ee
which, hence, is the probability that, for a given graph $H$, two randomly
chosen
vertices are in the same component, or, equivalently, connected through a
path. Note that, as $H$ is random, the probability of being connected is
therefore itself a random quantity. 

Using a martingale CLT it was proved by \cite{Janson2008} that, if
$H$ is a sub-critical \ER\ graph, $\chi(H)$ is asymptotically
normal. However, it is well known that standard martingale CLTs will often not
yield optimal rates of convergence with respect to the Kolmogorov metric (see
e.g.\ \cite{Bolthausen1982a}). As often the actual dependence structure under
consideration is much weaker than in a worst-case scenario, \cite{Rinott1998}
provide bounds that incorporate expressions to measure the dependency and 
which yield optimal bounds in many settings, but the involved quantities
are typically hard to calculate. Let us derive here some bounds without using
martingales and which are optimal up to some additional logarithmic factors.

Let us consider here more general statistics that depend only on the properties
of
the components of two randomly chosen vertices. For a vertex $i \in V(H)$
denote by $C(H,i)\subset H$ the component (i.e. the maximal subgraph)
containing~$i$. Let $h = h(H,i,j)$ be a function that is determined by $i$ and
$j$ and the two components that contain $i$ and $j$, respectively, that is,
\ben                                                    \label{64}
  h(H,i,j) = h(H\cap (C(H,i)\cup C(H,j)),i,j),\qquad
  \text{for all $H$, $i$ and $j$,}
\ee
and which is symmetric in the sense that
\ben                                                     \label{65}
  h(H,i,j)=h(H,j,i),\qquad
  \text{for all $H$, $i$ and $j$.}
\ee
Then define
\ben                                                      \label{66}
  U(H) := \sum_{i\in V(H)}\sum_{j\in V(H)} h(H,i,j).
\ee
We can recover susceptibility \eq{62} from \eq{66} (up to a normalising
constant) simply by choosing
\ben                                              \label{67}
   h(H,i,j) = \I[\text{$i$ and $j$ are in the same component}].
\ee
As there is little hope for a normal limit for general $h$ we consider
functions that are non-zero only if the two random vertices are in
the same component, that is,
\ben                                                  \label{68}
  C(H,i)\neq C(H,j) \quad\implies\quad h(H,i,j) = 0.
\ee
Hence, under \eq{68}, we can write \eq{66} also as 
\ben                                                      \label{69}
  U(H) = \sum_{i\in V(H)}\sum_{j\in C(H,i)} h(H,i,j).
\ee
Dependence on just one vertex and its component is, of
course, also covered as a special case $h(H,i,i) = h_0(H,i)$ and $h(H,i,j) =
0$ if~$i \neq j$. For example $h_0(H,i)
= \I[\abs{C(H,i)}=1]$ yields the total number of singletons, or, more generally,
choose $h_0(H,i) = \I[C(H,i)>m_0]$  to obtain the number of vertices that are in
a component of size larger than~$m_0$. We can also recover \eq{62} by
setting
$h_0(H,i) = \abs{C(H,i)}^2/n$, however this function is not Lipschitz in the
size of the component. Other quantities of interest may be obtained, such as
\ba
  h(H,i,j) 
  & = \I[\text{$i$ and $j$ are connected, but not further than $m_0$
apart}],\\
   h(H,i,j) 
   & = \I[\text{$i$ and $j$ are in the same cycle}],\\
   h(H,i,j) 
   & = \I[\text{$i$ and $j$ are connected}]/
 	      \abs{C(H,i)}.
\ee
Note that by summing uniformly over
all the vertices, the components are size-biased. However, we can `de-bias'
as follows: for a given $h_0(H,i)$ of interest,
$h^u_0(H,i) = h_0(H,i)/\abs{C(H,i)}$ gives the corresponding unbiased result,
where now summing is done uniformly among the components. As $\abs{C(H,i)}\geq
1$, we have $\norm{\D h^u_0}\leq\norm{\D h_0}$. However, averaging is only
possible with respect to the expected number of components, that is, $h^u_0(H,i)
= \frac{h_0(H,i)}{\abs{C(H,i)}\IE K }$, not the actual number, as the function
$\frac{h_0(H,i)}{\abs{C(H,i)}K}$ does not satisfy~\eq{69}.

We have the following theorem.

\begin{theorem}\label{thm7} Let $U$ be as in \eq{66} for some function $h$
satisfying 
\eq{64}, \eq{65} and ~\eq{68}. Let $W = (U(H)- \mu)/\sigma$,
where $\mu=\IE U(H)$ and $\sigma^2 = \Var U(H)$. Let
\be
  \norm{h} = \sup_{H}\sup_{i,j\in[n]}\abs{h(H,i,j)}\geq 1
\ee
and, with $H^{k,l}$ denoting the graph where an additional edge is added
between $k$ and $l$ if $k\neq l$,
\be
  \norm{\D h} = \sup_{i,j,k,l\in [n]}\sup_{H}\babs{h(H^{k,l},i,j) - h(H,i,j)}
  \geq 1.
\ee
Assume that $0\leq h(H,i,j)\leq 2\norm{\D h}$ 
whenever $H$ is such that the components which contain $i$ and~$j$,
respectively, are singletons. 
Let $H$ be an \ER\ random graph with $n$ vertices and edge probabilities
$\lambda/n<1/n$. 
Then, there is a universal constant $K>0$ such that
\bes
  & \dk\bklr{\law(W),\N(0,1)}  \\
  & \quad \leq  K\bbbklr{
\frac{n\log(n\norm{h})^{11}\norm {\D
h}^3(1+1/a^2+\sigma^2/n)}{\lambda a^{11}\sigma^3}
    +\frac{e^a\ln(n\norm{h})^2}{\lambda a^2\sigma^2}}
\ee
whenever
\ben                                                    \label{70}
    a := \lambda -1 - \log \lambda \leq 4\ln(n\norm{h}).
\ee
\end{theorem}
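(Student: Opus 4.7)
The approach I would take is to apply Theorem~\ref{thm2} to a Stein coupling built from Construction~\ref{con3}, exploiting the classical exponential tail bound on component sizes in the sub-critical regime. By property~\eqref{68} I write $U(H)=\sum_{i=1}^n Y_i$ with $Y_i = \sum_{j\in C(H,i)}h(H,i,j)$, and set $X_i := (Y_i-\IE Y_i)/\sigma$, so $W=\sum_i X_i$. For each $i$ I would construct a resampled graph $H_i'$ by keeping the edges of $H$ inside $V\setminus C(H,i)$ frozen and replacing all edges touching $C(H,i)$ by a freshly sampled and appropriately coupled $\mathrm{ER}(n,\lambda/n)$ configuration, arranged so that $W_i':=(U(H_i')-\mu)/\sigma$ has the same marginal law as $W$ and, by a symmetrisation argument in the spirit of~\eqref{38}, satisfies $\IE[X_i\mid W_i']=0$. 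With $I$ uniform on $[n]$ independent of everything, Construction~\ref{con3} then yields the Stein coupling $(W,W_I',G)$ with $G=-nX_I$.

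To handle the main error term I would build $W''$ by a second, independent resampling of all edges touching the dependency neighbourhood $N_I := C(H,I)\cup C(H_I',I)$ together with its one-step boundary, using the Fact~A--type splitting property of the ER graph (the analogue for random graphs of the facts used in the proofs of Theorems~\ref{thm5} and~\ref{thm6}) to make $W''$ independent of the pair $(GD,S)$ with $\~D=D$ and $S=1$. This kills $r_0=r_1=r_2=r_3=0$ in Theorem~\ref{thm2} and reduces the task to controlling the truncation terms $r_6,r_6'$ together with the polynomial remainder $\alpha\beta^2+(\alpha\~\beta+\gamma)\beta'$.

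The key quantitative input is the classical Chernoff bound for the total progeny of a sub-critical Poisson$(\lambda)$ branching process, which stochastically dominates $\abs{C(H,i)}$ and gives, for $a=\lambda-1-\log\lambda$,
\be
  \IP\bkle{\abs{C(H,i)}\geq k}\leq e^{-ak}.
\ee
Choosing the truncation level $C\asymp a^{-1}\log(n\norm{h})$, the a priori pointwise bounds $\abs{G}\lesssim nC\norm{\D h}/\sigma$, $\abs{D}\lesssim C^2\norm{\D h}/\sigma$ and $\abs{D'}\lesssim C^3\norm{\D h}/\sigma$ hold on the event $\{\abs{C(H,I)},\abs{C(H_I',I)}\leq C\}$, while hypothesis~\eqref{70} guarantees that the exceptional event has probability of order $e^a/(n^2\norm{h}^2)$ up to polylog factors. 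Plugging these choices into Theorem~\ref{thm2}, together with the crude global bound $\abs{G},\abs{D},\abs{D'}\leq n\norm{h}/\sigma$ on the exceptional event, produces the two summands in the claimed bound: the first from the "good" event (with a large polynomial in $C$, hence in $\log(n\norm{h})/a$, coming from powers of $\abs{N_I}$ up to six in the estimates of $r_6'$ and $(\alpha\~\beta+\gamma)\beta'$), and the second from the exceptional event (whose prefactor $e^a$ reflects the Chernoff estimate).

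The main obstacle is arranging the construction of $H_i'$ so that $\IE[X_i\mid W_i']=0$ holds \emph{exactly}: unlike in the occupancy scheme, the conditional law of $X_i$ given $C(H,i)$ is not centred (the internal edges of $C(H,i)$ are conditioned on the induced subgraph being connected), so the resampling must be designed so that conditioning on $W_i'$ averages this bias away in a symmetric way. Propagating Fact~A through a second resampling stage to build $W''$, while keeping $\abs{D'}$ controlled by a fixed polynomial in $\abs{C(H,I)}$, is the main combinatorial bookkeeping, and it is this propagation that is ultimately responsible for the exponents $a^{-11}$ and $\log(n\norm{h})^{11}$ in the final bound.
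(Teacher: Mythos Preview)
Your approach is essentially the paper's: Construction~\ref{con3} applied to $X_i=(Y_i-\IE Y_i)/\sigma$ with $Y_i=\sum_{j\in C(H,i)}h(H,i,j)$, a second resampling for $W''$, and Theorem~\ref{thm2} with $\~D=D$, $S=1$, truncating at a level $C\asymp a^{-1}\log(n\norm{h})$ via the exponential component-size tail.

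The one substantive discrepancy is your ``main obstacle''. In the paper's construction there is no obstacle and no symmetrisation in the sense of~\eqref{38} is needed. The point is that all edges incident to $C_V(H,i)$ (including the \emph{internal} ones) are resampled from an independent copy $H^*$. Conditioning on $C(H,i)$ constrains only edges incident to $C_V(H,i)$; the edges of $H$ not incident to $C_V(H,i)$ remain i.i.d.\ $\Be(\lambda/n)$. Hence the conditional law of $H_i'$ given $C(H,i)$ is always that of an unconditional \ER\ graph, so $H_i'$ is \emph{independent} of $C(H,i)$. Since $X_i$ is $\sigma(C(H,i))$-measurable, $\IE[X_i\mid W_i']=\IE X_i=0$ immediately. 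In fact the paper stresses (Remark~\ref{rem2}) that $(H,H_i')$ is \emph{not} exchangeable, so an argument via~\eqref{38} would fail; fortunately Construction~\ref{con3} only needs~\eqref{31}, which independence delivers for free.

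Two minor points. First, your rough bound $\abs{G}\leq n\norm{h}/\sigma$ on the exceptional event is off by a factor of $n$ (since $\abs{Y_I}\leq\abs{C(H,I)}\,\norm{h}\leq n\norm{h}$ and $G=-nX_I$); the paper uses $\abs{G},\abs{D}\leq 2n^2\norm{h}/\sigma$, which is why $C$ must be chosen so that the tail probability beats $n^{-4}\norm{h}^{-2}$. Second, the paper's truncation thresholds on the good event are $\beta=2C^4\norm{\D h}/\sigma$ and $\beta'=2C^5\norm{\D h}/\sigma$ (not $C^2$ and $C^3$): one picks up an extra factor of $C^2$ because each of the at most $C^2$ affected vertices $k\in V_I$ contributes a sum over $l\in C(H,k)$ of differences bounded by $\norm{\D h}$ times the component size. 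This is what produces the exponent~$11$ after multiplying out $\alpha\beta\beta'$ and $\alpha\beta^2$.
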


Let now $\lambda<1$ be fixed and let $h$ be as in \eq{67} to obtain
susceptibility (up a normalising constant). Clearly, $\norm{h} = 1$ and
$\norm{\D h} = 1$. From \cite{Janson2008} we have that $\Var U(H)\sim 2\lambda
n(1-\lambda)^{-5}$, hence Theorem~\ref{thm7} yields a Kolmogorov bound of order
$\log(n)^{11}/\sqrt{n}$.

\def\iscon{\sim}
\begin{proof}[Proof of Theorem~\ref{thm7}] 

We will make use of Construction~\ref{con3} to obtain a Stein coupling.
We consider throughout the vertex set~$[n]$. Let
$C_V(H,i)$ denote the vertices of~$C(H,i)$. If $e=\{k,l\} \subset V(H)=[n]$ is a
potential edge in a graph $H$ on $[n]$ and
$K\subset [n]$, write $e\iscon K$ if $k\in K$
or $l\in K$ (or both). Write $e\not\iscon K$ if $k\notin K$ and $l\notin
K$.

Let now $\xi = (\xi_{\{i,j\}})$, where $i,j\in[n]$ and $i\neq j$, be an i.i.d.\
family of $\Be(\lambda/n)$ distributed random variables. Let $H$ be the graph on
the
vertex set $[n]$ with edge set $\xi$, that is, $e$ is an edge in $H$ iff
$\xi_e=1$. Let
$H^*$ be an independent copy of~$H$.  Note that, given
$C(H,i)$, we have that $(\xi_e)_{e\not\iscon C_V(H,i)}$ is a family of
i.i.d.\ $\Be(\lambda/n)$ random variables. Define the random graph $H'_{i}$
through
\be
e\in E(H_{i}')\quad:\iff 
   \vcenter{\hsize=0.5\hsize
    \vbox{$e\in E(H^*)$ and 
          $e\iscon C_V(H,i)$,$\enskip$ or}
    \vbox{$e\in E(H)$ 
      and $e\not\iscon C_V(H,i)$.}}
\ee
As $H'_{i}$, given $C(H,i)$, is always an unconditional \ER\ 
random graph by construction, we have that $C(H,i)$ is independent of the
graph~$H'_{i}$. 

Now, let $I$ be uniformly and independently distributed on~$[n]$. Set
$H'=H'_{I}$, $U = U(H)$ and $U' = U(H')$. Furthermore,
$W = (U-\mu)/\sigma$, $W' = (U'-\mu)/\sigma$ and 
\be
  G = - \frac{n}{\sigma}\bbbklr{\sum_{j\in C(H,I)}h(H,I,j)-\mu_{I}},
\ee
where $\mu_{i} := \IE \sum_{j\in C(H,i)}h(H,i,j)$. Then, it is easy to see
that \eq{31} is satisfied and hence 
we have a Stein coupling.

Let now
\be
    V_i := 
        \bigcup_{k\in C_V(H,i)} C_V(H_i',k)
\ee
be all those vertices of the graph $H$ whose components are affected
by changing from $H$ to~$H'_i$; that is, the vertices $C_V(H,i)$ themselves
on one hand and, on the other hand, the vertices $[n]\setminus C_V(H,i)$
that got connected in $H'_i$ to at least one of the vertices~$C_V(H,i)$.

Then we can write
\be
  \D_{i} := U(H_i') - U(H) = 
  \sum_{k\in V_i}\sum_{l\in C_V(H,k)} \bklr{h(H_i',k,l)-h(H,k,l)}.
\ee
Note that the subgraphs $H\cap V_i$ and $H_i'\cap V_i$
determine the value of~$\D_i$. Hence, similarly as before, let
$H^{**}$ be an independent copy of $H$ and define the graph $H_i''$ by 
\be
e\in E(H_i'')\quad:\iff 
   \vcenter{\hsize=0.5\hsize
    \vbox{$e\in E(H^{**})$ and 
          $e\iscon V_i$,$\enskip$ or}
    \vbox{$e\in E(H)$ 
      and $e\not\iscon V_i$.}}
\ee
With the same argument as before, $H''_i$ is independent of
$H\cap V_i$ and $H_i'\cap V_i$. Define 
\be
  V'_i = 
        \bigcup_{k\in V_i} C_V(H_i'',k)
\ee
to be all those vertices of the graph $H$ whose components are affected
by changing from $H$ to~$H''_i$. We can write
\be
  \D'_i := U(H''_i) - U(H) = 
      \sum_{k\in V_i'}\sum_{l\in C_V(H,k)} \bklr{h(H_i'',k,l)-h(H,k,l)}.
\ee

We are now in a position to apply Theorem~\ref{thm2}. As $(W,W',G)$ is a Stein
coupling we have~$r_0 = 0$. Setting $\~D = D$ and $S=1$ we have $r_1=r_2=0$, and
by construction of $H''_i$, that~$r_3 = 0$. 

Let $C>0$ to be chosen later. From \cite[p.~38]{Durrett2007} we have that 
\ben                                                        \label{71}
  \IP(\abs{C(H,1  )}\geq k) \leq \frac{e^{-ak}}{\lambda}.
\ee
Hence, we obtain the simple estimate
\be
    \IE\abs{C(H,1)}^2= \int_{0}^\infty\IP[\abs{C(H,1)}>\sqrt{x}\,] dx
    \leq \frac{2}{\lambda a^2}.
\ee
Note also that, if $j\in C_V(H,i)$,
\be
  \abs{h(H,i,j)}\leq \norm{\D h}\abs{C(H,i)}.
\ee
This yields $\abs{\mu_{i}}\leq \norm{\D h}\IE\abs{C(H,i)}^2\leq \norm{\D
h}\frac{2}{\lambda a^2}$. Furthermore, if $j\in V_i\setminus C_V(H,i)$, then
\be
  \abs{C(H,j)}\leq \abs{C(H_i',j)}.
\ee
With $\alpha :=
\frac{n}{\sigma}\norm{\D h}(C^2+\frac{2}{\lambda a^2})$ we have 
\ba
  \IP[\abs{G}>\alpha] 
  & \leq
  \IP[\abs{C(H,I)}> C] + \IP[\abs{G}>\alpha, \abs{C(H,I)}\leq C] 
\intertext{and, noticing that the last term is zero, this is}
  & = \IP[\abs{C(H,1)}> C].
\ee
Furthermore, with $\beta := 2C^4\norm{\D h}/\sigma$,
\ba
  & \IP[\abs{D}>\beta] \\
  & \leq \IP[\abs{C(H,I)}>C] + \IP[\abs{D}>\beta,\abs{C(H,I)}\leq C]\\
  & \leq \IP[\abs{C(H,1)}>C] \\
  & \quad + \IP[\abs{C(H,I)}\leq C, \text{$\exists j\in C_V(H,I)$ s.t.\
$\abs{C(H'_I,j)}>C$}] \\
  &\quad +\IP[\abs{D}>\beta,\abs{C(H,I)}\leq C,\text{
$\abs{C(H'_I,j)}\leq C$ for all $j\in C_V(H,I)$}]
\intertext{and, noticing that the last term is zero, this is}
  & \leq \IP[\abs{C(H,1)}>C] + C\IP[\abs{C(H,1)}>C] \\
 & = (C+1)\IP[\abs{C(H,I)}>C]
\ee
Finally, with $\beta' = 2C^5\norm{\D h}/\sigma$,
\ba
  & \IP[\abs{D'}>\beta'] \\
  & \leq \IP[\abs{C(H,I)}>C] + \IP[\abs{D'}>\beta',\abs{C(H,I)}\leq C]\\
  & \leq \IP[\abs{C(H,1)}>C] \\
  & \quad + \IP[\abs{C(H,I)}\leq C, \text{$\exists j\in C_V(H,I)$ s.t.\
$\abs{C(H'_I,j)}>C$}] \\
  &\quad +\IP[\abs{D'}>\beta',\abs{C(H,I)}\leq C,\text{
$\abs{C(H'_I,j)}\leq C$ $\forall j\in C_V(H,I)$}]\\
  & \leq \IP[\abs{C(H,1)}>C] 
  + C\IP[\abs{C(H,1)}>C] \\
  &\quad +\IP\bklel\abs{C(H,I)}\leq C,\text{
$\abs{C(H'_I,j)}\leq C$ $\forall j\in C_V(H,I)$},\\
  &\kern20em\text{$\exists j\in V_I$ s.t.\ $\abs{C(H''_I,j)}>C$}\bkler\\
  &\quad +\IP\bklel\abs{D'}>\beta',\abs{C(H,I)}\leq C,\text{
$\abs{C(H'_I,j)}\leq C$ $\forall j\in C_V(H,I)$},\\
  &\kern20em\text{$\abs{C(H''_I,j)}\leq C$
$\forall j\in V_I$}\bkler
\intertext{and, noticing that the last term is zero, this is}
  & \leq \IP[\abs{C(H,1)}>C] 
  + C\IP[\abs{C(H,1)}>C] + C^2\IP[\abs{C(H,1)}>C]\\
  & \leq (C+1)^2\IP[\abs{C(H,1)}>C].
\ee

We also have the rough bounds
\be
  \abs{G},\abs{D} \leq \frac{2n^2}{\sigma}\norm{h},
\ee

Hence, choosing $C = 8\ln(n\norm{h})/a - 1$ we have that $C\geq1$ from \eq{70},
and using \eq{71} we have
\ba
  \IP[\abs{G}>\alpha] & \leq \frac{e^{-aC}}{\lambda}\leq
    \frac{e^a}{\lambda n^4\norm{h}^2},\\
  \IP[\abs{D}>\beta] & \leq (C+1)\frac{e^{-aC}}{\lambda}\leq
    \frac{4e^a\ln(n\norm{h})}{a\lambda n^4\norm{h}^2},\\
  \IP[\abs{D'}>\beta'] & \leq (C+1)^2\frac{e^{-aC}}{\lambda}\leq
    \frac{16e^a\ln(n\norm{h})^2}{a^2\lambda n^4\norm{h}^2}.
\ee

From Theorem~\ref{thm2} we hence have
\bes
  & \dk(\law(W),\N(0,1))\\
  &\leq 12(\alpha\beta+1)\beta'+8\alpha\beta^2\\
  &\quad+\frac{4n^4\norm{h}^2}{\sigma^2}\bklr{\IP[\abs{G} >
  \alpha]+\IP[\abs{D} > \beta]+\IP[\abs{D'} > \beta']}\\
  &\leq 12\bbbklr{\frac{n\norm{\D
      h}}{\sigma}\bbbklr{C^2+\frac{2}{\lambda
      a^2}}\frac{2C^4\norm{\D
      h}}{\sigma}+1}\frac{2C^5\norm{\D h}}{\sigma}\\
  &\quad+8\frac{n\norm{\D
      h}}{\sigma}\bbbklr{C^2+\frac{2}{\lambda
      a^2}}\frac{4C^8\norm{\D h}^2}{\sigma^2}\\
  &\quad+\frac{4n^4\norm{h}^2}{\sigma^2}\bbbklr{
  \frac{e^2}{\lambda n^4\norm{h}^2}
  + \frac{4e^a\ln(n\norm{h})}{a\lambda n^4\norm{h}^2}
  +\frac{16e^a\ln(n\norm{h})^2}{a^2\lambda n^4\norm{h}^2}
  }\\
  &\leq 24\bbbklr{\bbbklr{C^2+\frac{2}{\lambda
      a^2}}C^4+\frac{\sigma^2}{n}}\frac{nC^5\norm{\D
h}^3}{\sigma^3}\\
  &\quad+32\bbbklr{C^2+\frac{2}{\lambda
      a^2}}\frac{C^8n\norm{\D h}^3}{\sigma^3}
  +\frac{84e^a\ln(n\norm{h})^2}{a^2\lambda\sigma^2}\\
  &\leq
32\bbbklr{2+\frac{4}{a^2\lambda}+\frac{\sigma^2}{n}}\frac{nC^{11
} \norm { \D
h}^3}{\sigma^3}
    +\frac{84e^a\ln(n\norm{h})^2}{\lambda a^2\sigma^2}.
\ee

\end{proof}

\begin{remark}\label{rem2} Note that $(H,H_i')$ does not form an
exchangeable pair, although the marginal distributions are the same. To
see this, denote by $G_c$ the complete
graph and by $G_0$ the empty graph on the vertices $[n]$, where we assume~$n>2$.
Now, given $H=G_c$, $H'_i$ is just an independent
realisation of the \ER\ random graph, hence
\be
  \IP[H_i'=G_0|H=G_c] = \IP[H_i'=G_0] = (1-p)^{n\choose 2}.
\ee
On the other
hand,
\be
  \IP[H=G_0|H_i'=G_c] 
    = \frac{\IP[H_i'=G_c|H=G_0]\IP[H=G_0]}{\IP[H_i'=G_c]}
    = 0
\ee
since it is not possible that $H_i'$ is a complete graph if $H$ is empty.
\end{remark}

\section{Zero bias transformation}

Assume that $\IE W = 0$ and~$\Var W=1$. It was proved in \cite{Goldstein1997}
that there exists a unique distribution $\law(W^z)$ such that, for all smooth
$f$ we have
\ben                                                    \label{72}
    \IE\bklg{Wf(W)} = \IE f'(W^z)
\ee
Furthermore, $\law(W^z)$ has a density $\rho$ with respect to the Lebesgue
measure. Let us first discuss the connection between our general framework
and~$\rho$.

\begin{lemma}\label{lem5} Let $(W,W',G)$ be a Stein coupling. Then, with
$\^K(t)$ as in~\eq{20},
\ben                                                \label{73}
    \IE\bklg{G(\I[W \leq u < W'] - \I[W'\leq u< W])} = \IE \^K(u-W) = \rho(u)  
\ee
for Lebesgue almost all~$u\in\IR$.
\end{lemma}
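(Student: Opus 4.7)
The plan is to start from the Stein coupling identity and the fundamental theorem of calculus, then recognize that the ``dual'' of the expression on the left-hand side of \eqref{73} is exactly what intertwines $Wf(W)$ with an expression of the form $\int f'(u)\mu(du)$, which matches the defining property of $\rho$.

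First I would verify the first equality in \eqref{73} as a triviality: by the definition of $\^K(t)$ in \eqref{20} together with $W'=W+D$, we have
\be
  \^K(u-W)=G\bklr{\I[0\leq u-W<D]-\I[D\leq u-W<0]}
    =G\bklr{\I[W\leq u<W']-\I[W'\leq u<W]},
\ee
so the first equality holds deterministically (and then in expectation).

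Next I would derive the key identity by applying the coupling property. From the fundamental theorem of calculus one has for smooth $f$
\be
  f(W')-f(W)=\int_0^D f'(W+t)\,dt
    =\int_\IR\bklr{\I[0\leq t<D]-\I[D\leq t<0]}f'(W+t)\,dt.
\ee
Multiplying by $G$ and taking expectation, the Stein coupling property \eqref{8} yields
\be
  \IE\bklg{Wf(W)}=\IE\bklg{Gf(W')-Gf(W)}
    =\IE\int_\IR\^K(t)f'(W+t)\,dt.
\ee
Changing variables by setting $u=W+t$ inside the expectation and then applying Fubini (under a mild integrability assumption on $f'$, say $f'$ bounded and compactly supported, which suffices because $\^K(t)$ vanishes outside $[-|D|,|D|]$), this equals
\be
  \int_\IR f'(u)\,\IE\^K(u-W)\,du.
\ee

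On the other hand, the defining property \eqref{72} of the zero-bias distribution gives
\be
  \IE\bklg{Wf(W)}=\IE f'(W^z)=\int_\IR f'(u)\rho(u)\,du.
\ee
Subtracting, we obtain $\int_\IR f'(u)\bklr{\IE\^K(u-W)-\rho(u)}du=0$ for every smooth compactly supported $f$. Since $f'$ then ranges over a dense subclass of $L^1(\IR)$ (all smooth compactly supported functions of mean zero, which is enough together with the observation that both $\IE\^K(\cdot-W)$ and $\rho$ integrate to $\IE(GD)=1=\int\rho$, so the difference also has zero integral), we conclude that $\IE\^K(u-W)=\rho(u)$ for Lebesgue almost every $u$.

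The only real obstacle is the justification of Fubini and the change of variables: the integrand $\^K(t)f'(W+t)$ is bounded in absolute value by $|G|\cdot\|f'\|_\infty\I[|t|\leq|D|]$, so the $\IE\int|\cdot|<\infty$ condition is satisfied as soon as $\IE|GD|<\infty$, which is guaranteed by square integrability of $G$ and $D$. All other steps are routine manipulations once the identification $\^K(u-W)=G(\I[W\leq u<W']-\I[W'\leq u<W])$ has been made.
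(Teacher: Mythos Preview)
Your proof is correct and follows essentially the same route as the paper: both derive the identity $\IE\{Wf(W)\}=\int_\IR f'(u)\,\IE\^K(u-W)\,du$ from \eqref{8} and Fubini, and then identify $\IE\^K(\cdot-W)$ with~$\rho$. The only difference is in the last step: the paper plugs in the explicit test functions with $f'(x)=\delta^{-1}\I[a\leq x\leq a+\delta]$ to get $\int_a^{a+\delta}\IE\^K(u-W)\,du=\int_a^{a+\delta}\rho(u)\,du$ for all $a,\delta$, whereas you invoke density of $\{f':f\in C_c^\infty\}$ together with $\int\IE\^K(\cdot-W)=\int\rho=1$; both conclusions are valid and equally short.
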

\begin{proof} The first equality is clear. To prove the second one, let $f$ be a
bounded
Lipschitz-continuous function. We have
\be
    Gf(W') - Gf(W)
     = \int_{\IR} f'(W+t)\^K(t)dt
     = \int_{\IR} f'(u) \^K(u-W)du,
\ee 
so that, from \eq{8} and using Fubini's Theorem, 
\ben                                                                \label{74}
    \IE\klg{Wf(W)} = \int_{\IR} f'(u)\IE \^K(u-W)du.
\ee 
As we may take $f(x) = \delta^{-1}\int_0^\delta \I[x\leq a] \leq 1$ and thus
$f'(x) = \delta^{-1}I[a\leq x\leq a+\delta]$ for any $a\in\IR$ and $\delta>0$,
we have from \eq{72} and \eq{74} that
\be
    \int_{a}^{a+\delta}\IE\^K(u-W)du = \int_{a}^{a+\delta}\rho(u) du
\ee
which proves the claim as $a$ and $\delta$ are arbitrary.
\end{proof}

In \cite{Goldstein1997} and \cite{Goldstein2005a} a method was introduced to
construct $\law(W^z)$ using an underlying exchangeable pair satisfying the
linearity condition \eq{24} with~$R=0$. Although the construction itself does
not directly lead to a coupling of $W^z$ with $W$ (which is what we ultimately
want), it can nevertheless suggest ways to find such couplings; see for example
\cite{Goldstein2005b} and \cite{Ghosh2009}.

We can generalize the idea to our setting. However, as some of the involved
measures may become signed  measures, we have to proceed with more care.
Denote
by $F$ the probability measure on $\IR^2$ induced by~$(W,W')$. With \be
    \phi(w,w') := \IE(GD\,|\,W=w,W=w') = (w'-w)\IE(G\,|\,W=w,W=w') 
\ee
define a new (possibly signed) measure 
\be
    d\^F(w,w') = \phi(w,w')dF(w,w').
\ee
We have $\int_{\IR^2}d \^F(w,w') = 1$ because $\IE(GD) = 1$ from \eq{8}.
Let now the space $\^\Omega := \IR^2\times[0,1]$ be equipped with the standard
Borel $\sigma$-algebra and define the measure $Q = \^F \otimes \ell$ where
$\ell$ is the Lebesgue measure on~$[0,1]$. Define the mapping $W^z:\^\Omega\to
\IR$ as
\be
    W^z(w,w',u) := uw'+(1-u)w
\ee
Clearly, $W^z$ is measurable. We now have the following.

\begin{lemma}\label{lem6} Assume that  \eq{8} holds. Then, the measure $P^z$
on $\IR$ induced by the
mapping $W^z$ is a probability measure and for every function $f:\IR\to\IR$ with
bounded derivative we have
\ben                                                \label{75}
    \IE\bklg{Wf(W)} = \int_{\^\Omega} f'(W^z(w,w',u)) dQ(w,w',u)
        = \int_{\IR}f'(x)\rho(x)dx.
\ee
\end{lemma}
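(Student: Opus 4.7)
The plan is to verify the two equalities in \eq{75} separately and then deduce that $P^z$ is a probability measure. First, I apply the fundamental theorem of calculus in the form
\be
  f(w') - f(w) = (w'-w)\int_0^1 f'\bklr{uw' + (1-u)w}\,du,
\ee
valid for any $f$ with bounded derivative, and recognise the integrand as $f'(W^z(w,w',u))$. Multiplying by $\IE(G\mid W=w,W'=w')$, integrating against $dF(w,w')$ and invoking Fubini, I obtain
\be
  \int_{\^\Omega} f'(W^z(w,w',u))\,dQ(w,w',u) = \IE\bklg{G(f(W')-f(W))},
\ee
where I use $d\^F(w,w') = (w'-w)\IE(G\mid W=w,W'=w')\,dF(w,w')$ together with $Q=\^F\otimes\ell$. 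The Stein coupling identity \eq{8} then converts the right-hand side into $\IE\klg{Wf(W)}$, giving the first equality in \eq{75}.

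For the second equality, I would invoke Lemma~\ref{lem5}: its proof already established in \eq{74} that $\IE\klg{Wf(W)} = \int_{\IR} f'(u)\,\IE\^K(u-W)\,du$, while its conclusion identifies $\IE\^K(u-W) = \rho(u)$ for Lebesgue almost every $u$. Concatenating these gives the second equality of \eq{75}.

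It remains to upgrade $P^z$, which is a priori only the signed pushforward of $Q$ along $W^z$, to a genuine probability measure. Combining the two equalities shows that $\int_{\IR} f'\,dP^z = \int_{\IR} f'\rho\,dx$ for every $f$ with bounded derivative. Because every bounded measurable $g$ is the almost-everywhere derivative of a Lipschitz function $f(x) = \int_0^x g(y)\,dy$, this identity extends to $\int g\,dP^z = \int g\rho\,dx$ for all bounded measurable $g$. Hence $P^z$ has Lebesgue density $\rho$, and since $\rho$ is the density of the zero-bias distribution established in \cite{Goldstein1997}, it is non-negative and integrates to one, so $P^z$ is a probability measure.

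The main technical point will be justifying the Fubini interchange with a possibly signed measure $\^F$; this requires $\phi \in L^1(F)$, which holds because $\IE\abs{GD} < \infty$ by Cauchy--Schwarz and the square integrability of $G$, $W$, $W'$ built into the notion of a Stein coupling. Once this integrability is in hand, the rest is manipulation of the defining formulas.
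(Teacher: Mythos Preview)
Your proposal is correct and follows essentially the same route as the paper: the first equality is obtained exactly as in the paper via the fundamental theorem of calculus and the definition of $\^F$ and $Q$, and the identification of $P^z$ with $\rho\,dx$ proceeds by testing against derivatives of Lipschitz functions, just as the paper does using the ``special functions'' from the proof of Lemma~\ref{lem5}. The only difference is the order of presentation---you establish both equalities in \eq{75} first and then deduce that $P^z$ is a probability measure, whereas the paper proves the first equality, identifies $\rho$ as the Radon--Nikodym derivative of $P^z$ directly from the comparison with \eq{72}, and reads off the second equality as a consequence---and you are more explicit about the integrability needed for Fubini, which is a welcome addition.
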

\begin{proof} Let us proof the first equality of \eq{75}.
\ba
    &\int_{\^\Omega} f'(W^z(w,w',u)) dQ(w,w',u)\\
    &\qquad=\int_{\IR^2}\int_{[0,1]} f'(w+u(w'-w)) du\,d\^F(w,w')\\
    &\qquad=\int_{\IR^2}\frac{f(w')-f(w)}{w'-w}d\^F(w,w')\\
    &\qquad=\int_{\IR^2}\phi(w,w')\frac{f(w')-f(w)}{w'-w}d F(w,w')\\
    &\qquad=\IE\{ G(f(W')-f(W))\} = \IE\{Wf(W)\}.
\ee
We thus have proved that the measure $P^z$, induced by $W^z$ and $Q$, satisfies
\ben                                                        \label{76}
    \IE\{Wf(W)\} = \int_{\IR} f'(x) dP^z(x).
\ee
Using the special functions from the proof of Lemma~\ref{lem5}, it is clear
from \eq{76} and \eq{72} that $\rho$ is the Radon-Nykodim derivative of $P^z$
with respect to the Lebesgue measure. This implies the second equality in
\eq{75}.
\end{proof}

In the case where $\phi(W,W')\geq 0$ almost surely, $\^F$ is also a probability
measure and we can write
\be
    W^z = U\^W' + (1-U)\^W
\ee
where $(W,W')$ has distribution $\^F$ and $U\sim U[0,1]$ is independent
of~$(W,W')$.

\section{Proofs of main results} \label{sec14}

\subsection{Preliminaries}

For a random variable $X$ define the truncated version
$\trunc{X}:= (X\wedge 1)\vee(-1)$. For a real number $t$ define $t_+ = t\vee 0$
and $t_- = t\wedge 0$.

Stein's method for normal approximation is based on the differential equation
\ben                                                        \label{77}
    f'(w) - wf(w) = h(w) - \IE h(Z)
\ee
where $Z\sim\N(0,1)$ which can be solved for any measurable function $h$ for
which $\IE h(Z)$ exists. The solution $f_h$ is differentiable and, if $h$ is
Lipschitz, also $f_h'$ is Lipschitz; see \cite{Stein1986}.

\begin{lemma}\label{lem7} Let $W$, $W'$, $W''$, $\~D$ and $G$ be square
integrable random variables. Let $h$ be a measurable function for which $\IE
h(W)$ and $\IE h(Z)$ exist and let $f$ be the solution to \eq{77}. Then
\be                                                    
   \babs{\IE h(W) - \IE h(Z)}\leq (\norm{f}\vee\norm{f'})r_0 +
\norm{f'}(r_1+r_2+r_3) + 
        \abs{\IE R_1(f)}+\abs{\IE R_2(f)},
\ee
where
\ban
    R_1(f) & = (G\~D-S)\klr{f'(W'')-f'(W)}          \label{78}\\
    R_2(f) & = G\int_0^D\bklr{f'(W+t)-f'(W)}.       \label{79}
\ee
\end{lemma}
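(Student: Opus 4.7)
The plan is to start from the Stein equation \eqref{77}, which gives
\be
  \IE h(W) - \IE h(Z) = \IE\bklg{f'(W) - Wf(W)},
\ee
so the entire task reduces to bounding the right-hand side. I would then replace $\IE\{Wf(W)\}$ by $\IE\{Gf(W')-Gf(W)\}$ at a cost of $(\|f\|\vee\|f'\|)r_0$: by homogeneity of the supremum defining $r_0$, the scaled test function $f/(\|f\|\vee\|f'\|)$ lies in the admissible class, giving
\be
  \babs{\IE\bklg{Gf(W')-Gf(W) - Wf(W)}} \leq (\norm{f}\vee\norm{f'})r_0.
\ee

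Next I would apply the fundamental theorem of calculus inside $\IE\{Gf(W')-Gf(W)\}$ to split off the linear-in-$f'$ part and the remainder $R_2$. Concretely,
\be
  G\bklr{f(W')-f(W)} = G\int_0^D f'(W)\,dt + G\int_0^D\bklr{f'(W+t)-f'(W)}\,dt = GDf'(W) + R_2(f),
\ee
so that $\IE\{Gf(W')-Gf(W)\} = \IE\{GDf'(W)\} + \IE R_2(f)$. The heart of the argument is then to massage $\IE\{GDf'(W)\}$ into $\IE f'(W)$ plus controlled error terms. Conditioning on $W$ and writing
\be
  \IE^W(GD) = 1 - \IE^W(1-S) + \IE^W(GD-G\~D) + \IE^W(G\~D - S),
\ee
I would get three error terms. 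The first two are immediately handled since $f'(W)$ is $\sigma(W)$-measurable, giving contributions of at most $\|f'\|r_1$ and $\|f'\|r_2$ by pulling the conditional expectation outside.

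The remaining term $\IE\{f'(W)\IE^W(G\~D-S)\} = \IE\{f'(W)(G\~D-S)\}$ is where $W''$ enters. I would write
\be
  \IE\bklg{f'(W)(G\~D-S)} = -\IE R_1(f) + \IE\bklg{f'(W'')(G\~D-S)},
\ee
and then condition the last term on $W''$ to get $\IE\{f'(W'')\IE^{W''}(G\~D-S)\}$, which is bounded by $\|f'\|r_3$. Collecting all pieces via the triangle inequality gives the stated bound. There is no real obstacle here; the only thing to be careful about is the sign convention $\int_0^D = -\int_D^0$ when $D<0$ in the definition of $R_2$, and the fact that the decomposition of $\IE^W(GD)$ must be algebraically consistent so that the constant $1$ produces $\IE f'(W)$ exactly, cancelling against the $\IE f'(W)$ on the left of the Stein-equation computation.
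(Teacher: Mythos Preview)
Your proposal is correct and follows essentially the same route as the paper: the paper writes the full algebraic decomposition \eq{82} in one shot, listing the six terms whose expectations are bounded respectively by $(\norm{f}\vee\norm{f'})r_0$, $\norm{f'}r_3$, $\norm{f'}r_2$, $\norm{f'}r_1$, $\abs{\IE R_1(f)}$ and $\abs{\IE R_2(f)}$, whereas you arrive at the same six pieces incrementally (first peel off $r_0$, then FTC to isolate $R_2$, then condition on $W$ to split $GD$ into $1$, the $r_1$- and $r_2$-terms, and $G\~D-S$, and finally shift to $W''$ to produce $R_1$ and $r_3$). The algebra and the bounds are identical; only the order of presentation differs.
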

\begin{proof} Let $f=f_h$ be the solution to \eq{77}. We can assume that
$\norm{f}$ and $\norm{f'}$ are finite otherwise the statement is trivial. From
the fundamental
theorem of calculus, we have
\ben                                                                \label{80}
    f(W') - f(W) = \int_{0}^D f'(W+t) dt.
\ee
Multiplying \eq{80} by $G$ and comparing it with the left hand side of \eq{77}
we have
\besn                                \label{82}
    h(W) - \IE h(W) & = f'(W) - Wf(W)\\
        & = Gf(W')-Gf(W)-Wf(W)  \\
        &\quad + (S-G\~D)f'(W'')\\
        &\quad + (1-S)f'(W)\\
        &\quad + G(\~D-D)f'(W)\\
        &\quad + (S-G\~D)\klr{f'(W)-f'(W'')}\\
        &\quad - G\int_{0}^D \bklr{f'(W+t)-f'(W)} dt.
\ee
Taking expectation, the lemma is immediate.
\end{proof}

\subsection{Bound on the Wasserstein distance}

If $h$ is Lipschitz continuous, then the solution $f$ to \eq{77} is
differentiable, $f'$ is Lipschitz and we have the following bounds:
\ben                                                        \label{83}
    \norm{f} \leq 2\norm{h'}, \quad
    \norm{f'} \leq \sqrt{\frac{2}{\pi}}\norm{h'}, \quad
    \norm{f''} \leq 2\norm{h'}.
\ee (see \cite{Stein1972} and \cite{Raic2004}).
\begin{proof}[Proof of Theorem~\ref{thm1}] The following bounds are easy to
obtain using Taylor's theorem:
\be
    \abs{\IE R_1(f)}  \leq 2 r_4'\norm{f'}+ r_5'\norm{f''},
    \qquad \abs{\IE R_2(f)} \leq 2r_4\norm{f'}+ 0.5 r_5\norm{f''}.
\ee
Combining this with Lemma~\ref{lem7} and the bounds \eq{83} for Lipschitz $h$
with $\norm{h'}\leq 1$
proves the theorem.
\end{proof}

\subsection{Bounds on the Kolmogorov distance}

If, for some $\eps>0$, $h$ is of the form
\ben                                                                \label{84}
    h_{a,\eps}(x) = 
    \begin{cases}
     1,&\text{if $x\leq a$,}\\
     1+(a-x)/\eps,&\text{if $a\leq x\leq a+\eps$,}\\
     0,&\text{if $x>a+\eps$,}\\
    \end{cases}
\ee
then, from pages 23 and 24 of
\cite{Stein1986} (see also \cite{Chen2004a}) we have for $f_{a,\eps}$ for
every $w,v\in\IR$ the bounds
\ben                                                \label{85}
    0\leq f_{a,\eps}(w) \leq \sqrt{2\pi}/4, \qquad
    \abs{f_{a,\eps}'(w)}\leq 1, \qquad
    \abs{f_{a,\eps}'(w)-f_{a,\eps}'(v)}\leq 1
\ee
and, in addition for every $s,t\in\IR$,
\ban
    &\abs{f_{a,\eps}'(w+s)-f_{a,\eps}'(w+t)}\notag\\
    &\qquad\leq(\abs{w}+1)\min(\abs{s}+\abs{t},1) 
    +\eps^{-1}\int_{s\wedge t}^{s\vee t}\I[a\leq w+u\leq a+\eps]du\label{86}\\
    &\qquad\leq(\abs{w}+1)\min(\abs{s}+\abs{t},1) 
    + \I[a-s\vee t\leq w\leq a-s\wedge t+\eps].\label{87}
\ee

Throughout this section let $\kappa:=\dk\bklr{\law(W),\N(0,1)}$.
Now, it is not difficult to see that for any~$\eps>0$,
\ben                                                        \label{90}  
    \kappa
    \leq \sup_{a\in\IR}\babs{\IE h_{a,\eps}(W)-\IE h_{a,\eps}(Z)} + 0.4\eps,
\ee
so that we can use Lemma~\ref{lem7} for functions of the form~\eq{84}.
If $f_{a,\eps}$ is the solution to \eq{77}, from the bounds \eq{85} we thus have
\ben                                                        \label{91}
    \kappa \leq r_0 + r_1 + r_2 + r_3 + 0.4\eps 
       + \sup_{a\in\IR}\abs{\IE R_1(f_{a,\eps})}
        + \sup_{a\in\IR}\abs{\IE R_2(f_{a,\eps})},
\ee
which will be the basis for further bounds on~$\kappa$.

Recall the well known relation between the arithmetic and geometric mean, that
is, for each $x,y>0$ and $\theta>0$ we have
\ben                                                    \label{92}
    \sqrt{xy}\leq \frac{\theta x+\theta^{-1}y}{2},
\ee
which will be used several times. We will also make use of the
following simple lemma in order to implement the recursive approach; see
\cite{Raic2003}.

\begin{lemma}\label{lem8} For any random variable $V$ and for any $a<b$ we have
\ben                                                    \label{93}
    \IP[a\leq V\leq b] \leq \frac{b-a}{\sqrt{2\pi}}+
    2\dk\bklr{\law(V),\N(0,1)}.
\ee
\end{lemma}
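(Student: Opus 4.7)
The plan is to reduce the probability of an interval to a difference of CDF values, use the definition of the Kolmogorov distance to replace the unknown CDF of $V$ by the standard normal CDF $\Phi$ (paying a penalty of $2\dk$), and then bound $\Phi(b)-\Phi(a)$ by the maximum of the Gaussian density times the interval length.

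Concretely, I would start by writing
\be
   \IP[a\leq V\leq b] = \IP[V\leq b]-\IP[V<a].
\ee
By definition of $\dk := \dk\bklr{\law(V),\N(0,1)}$, we have $\IP[V\leq b]\leq \Phi(b)+\dk$. For the lower bound on $\IP[V<a]$, I would note that $\IP[V<a]=\lim_{x\uparrow a}\IP[V\leq x]\geq \lim_{x\uparrow a}(\Phi(x)-\dk)=\Phi(a)-\dk$, using continuity of $\Phi$. Subtracting gives
\be
    \IP[a\leq V\leq b]\leq \Phi(b)-\Phi(a)+2\dk.
\ee

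Finally, since the standard normal density is bounded above by $1/\sqrt{2\pi}$, we have $\Phi(b)-\Phi(a)=\int_a^b \phi(x)\,dx\leq (b-a)/\sqrt{2\pi}$, which combined with the previous display yields the claim.

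The only subtle point is the lower bound on $\IP[V<a]$: the Kolmogorov distance controls $|\IP[V\leq x]-\Phi(x)|$ for each $x$, but $\IP[V<a]$ might differ from $\IP[V\leq a]$ by an atom at $a$. Taking the left limit along $x\uparrow a$ and exploiting continuity of $\Phi$ sidesteps this issue without loss. Everything else is a one-line estimate, so I do not expect any real obstacle here.
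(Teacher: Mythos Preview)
Your argument is correct and is exactly the standard proof of this elementary estimate. The paper itself does not supply a proof for this lemma at all; it merely states it as a ``simple lemma'' with a reference to \cite{Raic2003}, so there is nothing further to compare.
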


\begin{proof}[Proof of Theorem~\ref{thm2}]
Let $f=f_{a,\eps}$ be the solution to \eq{77}, where we will from now on omit
the dependency on $a$ and $\eps$ for better readability. 
Let
$I_1=\I[\abs{G}\leq\alpha,\abs{\~D}\leq\~\beta,\abs{D'}\leq\beta',\abs{S}\leq
\gamma]$ and write
\eq{78} as 
\bes
    \IE R_1(f) & =
    \IE\bklg{(G\~D-S)(1-I_1)(f'(W'')-f'(W))}\\
    &\quad+\IE\bklg{(G\~D-S)I_1(f'(W'')-f'(W))} =: J_1 + J_2.
\ee
Using \eq{85}, the bound $\abs{J_1}\leq r_6'$ is immediate. Let for convenience
$k:=\IE\abs{W}+1$. Using \eq{86} and Lemma~\ref{lem8},
\bes
    J_2 
    &\leq\IE\babs{(G\~D-S)I_1\bklr{f'(W'')-f'(W)}}\\
    &\leq(\alpha\~\beta+\gamma)k\beta'
    +(\alpha\~\beta+\gamma)\eps^{-1}\int_{-\beta'}^{\beta'}
        \IP[a\leq W+u\leq a+\eps]du\\
    &\leq(\alpha\~\beta+\gamma)k\beta'
    +0.8(\alpha\~\beta+\gamma)\beta'
    + 4(\alpha\~\beta+\gamma)\beta'\eps^{-1}\kappa.
\ee

Similarly, let
$I_2=\I[\abs{G}\leq\alpha,\abs{D}\leq\beta]$ and write \eq{79} as
\bes
    \IE R_2(f)  & = \IE\bbbklg{G (1-I_2)\int_0^D\bklr{f'(W+t)-f'(W)}dt}\\
    & \quad+ \IE\bbbklg{GI_2\int_0^D\bklr{f'(W+t)-f'(W)}dt}
    =: J_3 + J_4.
\ee
By \eq{85}, $\abs{J_3}\leq r_6$. Using \eq{86} and Lemma~\ref{lem8},
\bes
    J_4 
    &\leq\IE\bbbabs{GI_2\int_{D_-}^{D_+}\babs{f'(W+t)-f'(W)}dt}\\
    &\leq\alpha\int_{-\beta}^{\beta}\abs{t}k dt
    +\alpha\eps^{-1}\int_{-\beta}^{\beta}\int_{t_-}^{t_+}
       \IP[a\leq W+u \leq a+\eps] dudt\\
    &\leq \alpha\beta^2k 
        +0.4\alpha\beta^2 + 2\alpha\beta^2\eps^{-1}\kappa,
\ee
so that, collecting all the bounds, setting
$\eps=4\alpha\beta^2+8(\alpha\~\beta+\gamma)\beta'$ and making use of \eq{91},
we obtain
\bes
    \kappa &\leq r_0+ r_1 + r_2+r_3 + r_6 + r_6'
        +(\alpha\~\beta+\gamma)(k+0.8)\beta'+(k+0.4)\alpha\beta^2\\
        &\quad +0.4\eps + \bklr{2\alpha\beta^2+
        4(\alpha\~\beta+\gamma)\beta'}\eps^{-1}\kappa\\
        &= r_0+ r_1 + r_2 + r_3 + r_6 + r_6'
        +(\alpha\~\beta+\gamma)(k+4)\beta'+(k+2)\alpha\beta^2 + 0.5\kappa      
\ee
which, solving for $\kappa$, proves the theorem.
\end{proof}

\begin{proof}[Proof of Theorem~\ref{thm3}] Let $f=f_{a,\eps}$ be the solution
to~\eq{77}. As we assume $W''=W$, we have $R_1(f)  = 0$. Write \eq{79} as
\bes
    R_2(f) & = \IE{\int_{-\infty}^\infty \bklr{f'(W+t)-f'(W)}K^W(t)dt}\\
    &= \IE{\int_{\abs{t}>1}\bklr{f'(W+t)-f'(W)}K^W(t)dt}\\
     &\quad+\IE{\int_{\abs{t}\leq1}
            \bklr{f'(W+t)-f'(W)}\bklr{K^W(t)-K(t)}dt}\\
      &\quad +\IE{\int_{\abs{t}\leq1} \bklr{f'(W+t)-f'(W)}K(t)dt}\\
      &=: J_1+J_2 + J_3.
\ee
Clearly, by \eq{85}, $\abs{J_1}\leq r_4$. Now let us bound~$J_2$.
By \eq{87},
\bes
    \abs{J_2}&\leq \IE\int_{\abs{t}\leq 1}\bklr{\abs{W}+1}\abs{t}
                                    \abs{K^W(t)-K(t)}dt\\
       &\quad + \IE\int_0^1 \I[a-t\leq W \leq a+\eps]\abs{K^W(t)-K(t)}dt\\
       &\quad + \IE\int_{-1}^0 \I[a\leq W \leq a-t+\eps]\abs{K^W(t)-K(t)}dt\\
       &=: J_{2,1} + J_{2,2} + J_{2,3},
\ee
Using \eq{92}, we have for any $\theta>0$ that
\bes
    J_{2,1} &\leq \frac{\theta}{2}\IE\int_{\abs{t}\leq1}(\abs{W}+1)^2\abs{t}dt
        + \frac{1}{2\theta}\IE\int_{\abs{t}\leq1}\abs{t}\abs{K^W(t)-K(t)}^2dt\\
        &= \frac{\theta}{2}\IE(\abs{W}+1)^2
            + \frac{1}{2\theta}r_8^2.
\ee
Let $\alpha_0 = (\IE(\abs{W}+1)^2)^{1/2}$ and choose $\theta =
\alpha_0^{-1}r_8$,
so that
\be
    J_{2,1} \leq \alpha_0  r_8.
\ee
Let now $\delta = 0.4\eps+2\kappa$. Then, from Lemma~\ref{lem8}, we have for
$t>0$
\ben                                                                \label{94}
    \IP[a-t\leq W\leq a+\eps] \leq \delta + 0.4 t.
\ee 
Using \eq{92},
\bes 
    J_{2,2} & \leq \IE\bbbklg{\int_0^1 \bbklel
        0.5\eps\bklr{\delta+0.4t}^{-1}\I[a-t\leq W\leq a+\eps]\\
        &\kern7em +0.5\eps^{-1}\bklr{\delta+0.4t}
            \bklr{K^W(t)-K(t)}^2
    \bbkler dt} \\
    & \leq 0.5\eps + 0.5\eps^{-1}\delta\int_0^1\Var(K^W(t)) dt + 
        0.2 \eps^{-1}\int_0^1 t\Var(K^W(t))dt.     
\ee
A similar bound holds for $J_{2,3}$ so that 
\be
    \abs{J_2}
    \leq \alpha_0r_8 +\eps + 0.5\eps^{-1}\delta r_7 +
0.2\eps^{-1}r_8^2.
\ee
By \eq{86},
\bes
    \abs{J_3} &\leq \IE\int_{\abs{t}\leq 1} (\abs{W}+1)\abs{tK(t)}dt\\
    &\quad+\eps^{-1}\int_{\abs{t}\leq1}\bbbabs{
        \int_0^t\IP[a\leq W+u\leq a+\eps]du
}\abs{K(t)}dt\\
    &\leq (\IE\abs{W}+1)r_5 + \eps^{-1}\int_{\abs{t}\leq 1} \delta\abs{tK(t)}dt
    \leq (\IE\abs{W}+1)r_5 + \eps^{-1}\delta r_5.
\ee
Choose now
\be
    \eps = \sqrt{1.4}\bkle{\kappa(2r_5 + r_7) + 0.2r_8^2}^{1/2}.
\ee
From \eq{91} (note that $r_1=0$ because $\~D = D$), using that
$\sqrt{x+y}\leq \sqrt{x}+\sqrt{y}$ and \eq{92}, we obtain
\bes
    \kappa&\leq r_0 + r_2 + r_3 + 0.4\eps + \abs{J_1}+\abs{J_2}+\abs{J_3}\\
    &\leq r_0 + r_2 + r_3 + r_4 + (\IE\abs{W}+1)r_5 +\alpha_0r_8+1.4\eps\\
    &\quad + \eps^{-1}(\delta(r_5+0.5r_7)+0.2r_8^2)\\
    &\leq r_0 + r_2 + r_3 + r_4 + (\IE\abs{W}+1.4)r_5 
        + 0.2r_7 + \alpha_0r_8 + 1.4\eps \\
    &\quad + \eps^{-1}\bklr{\kappa(2r_5 + r_7) + 0.2r_8^2}\\
    &\leq r_0 + r_2 + r_3 + r_4 + (\IE\abs{W}+1.4)r_5 
        + 0.2r_7 + \alpha_0r_8  \\
    &\quad + 2.4\bklr{\kappa(2r_5 + r_7) + 0.2r_8^2}^{1/2}\\
    &\leq r_0 + r_2 + r_3 + r_4 + (\IE\abs{W}+1.4)r_5 
        + 0.2r_7 + (\alpha_0+1.1)r_8  \\
    &\quad + 2.4\bklr{\kappa(2r_5 + r_7)}^{1/2}\\
    &\leq r_0 + r_2 + r_3 + r_4 + (\IE\abs{W}+1.4+2.4\theta^{-1})r_5 
        + (0.2+1.2\theta^{-1})r_7  \\
    &\quad  + (\alpha_0+1.1)r_8+ 1.2\theta\kappa\\
\ee
Choosing $\theta = 1/2.4$ and solving for $\kappa$ proves the claim.
\end{proof}

\begin{proof}[Proof of Theorem~\ref{thm4}]
Let $f=f_{a,\eps}$ be the solution to \eq{77}. Now, from~\eq{86},
\bes
    \abs{R_1(f)} 
    &\leq  \babs{(S-G\~D)(\abs{W}+1) \-D'}\\
    &\quad + \abs{S-G\~D}\eps^{-1} 
        \int_{D'_-}^{D'_+}\I[a\leq W + u \leq a + \eps] du
\ee
so that 
\bes
    \IE\abs{R_1(f)} 
    & \leq \IE\babs{(S-G\~D)(\abs{W}+1) \-D'}\\
    &\quad+\eps^{-1}\IE\babs{(S-G\~D)D' S_\eps(\law(W|G,\~D,D'))}.      
\ee
Furthermore,
\bes
    \abs{R_2(f)} & 
    \leq \abs{G}\int_{D_-}^{D_+}(\abs{W}+1)(\abs{t}\wedge 1)dt\\
    &\quad +\eps^{-1}\abs{G}\int_{D_-}^{D_+}
    \int_{t_-}^{t_+}\I[a\leq W+u\leq a+\eps]dudt
\ee
hence
\be
    \IE\abs{R_2(f)}\leq 0.5\IE\abs{G(\abs{W}+1)\-D^2}
    + 0.5\eps^{-1}\IE\babs{GD^2 S_\eps(\law(W|G,D))}
\ee
Combining these bounds with  \eq{91} proves the theorem. 
\end{proof}

To prove Lemma~\ref{lem1} we first need a simple lemma.

\begin{lemma}\label{lem12} Let $\beta_{k,l}$, $1\leq l< k$,
$k=2,\dots,n$, be non-negative real numbers. If $a_1 \leq b_1 = 1$ and if
there are constants $q\geq 1$ and $p<1$ such
that, for all $k=2,3,\dots,n$, we have $\sum_{l=1}^{k-1}\beta_{k,l}\leq p$,
\be
    a_k = q + \sum_{l=1}^{k-1}\beta_{k,l}a_l\qquad\text{and}\qquad
    b_k = q + pb_{k-1},
\ee
then $a_k\leq b_k \leq q/(1-p)$
for all $1\leq k\leq n$.
\end{lemma}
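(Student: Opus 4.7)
The plan is a straightforward double induction on $k$: first show $a_k \le b_k$, then show $b_k \le q/(1-p)$.

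For the first inequality, I would proceed by induction on $k$. The base case $k=1$ is given by hypothesis. Before handling the inductive step, I would first observe that the sequence $(b_k)$ is non-decreasing: from $b_{k+1}-b_k = p(b_k-b_{k-1})$ and the initial difference $b_2 - b_1 = q + p - 1 \geq 0$ (using $q\geq 1$, $p\geq 0$), monotonicity follows by a trivial induction. Granted this, if $a_l \le b_l$ for every $l < k$, then using non-negativity of $\beta_{k,l}$ and monotonicity of $(b_k)$,
\[
a_k = q + \sum_{l=1}^{k-1}\beta_{k,l}a_l \;\le\; q + b_{k-1}\sum_{l=1}^{k-1}\beta_{k,l} \;\le\; q + p\,b_{k-1} = b_k,
\]
which closes the induction.

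For the second inequality, I would again use induction. The base case is $b_1 = 1 \leq q/(1-p)$, which follows from $q \geq 1$ and $0 \le p < 1$. For the inductive step, if $b_{k-1} \leq q/(1-p)$, then
\[
b_k = q + p\,b_{k-1} \;\leq\; q + \frac{pq}{1-p} \;=\; \frac{q}{1-p},
\]
as required.

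There is no real obstacle here; the only thing worth being careful about is using the monotonicity of $(b_k)$ rather than trying to bound $\sum \beta_{k,l} a_l$ term by term against an expression involving $b_l$ for varying $l$, which would not immediately match the recursion for $b_k$. Both steps are two-line inductions once monotonicity is in place.
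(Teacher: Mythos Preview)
Your proof is correct and follows essentially the same approach as the paper's. The only cosmetic difference is that the paper establishes monotonicity of $(b_k)$ and the bound $b_k\leq q/(1-p)$ in one stroke via the closed form $b_k=(1-q/(1-p))p^k+q/(1-p)$, whereas you obtain each by a short induction; the core induction for $a_k\leq b_k$ is identical.
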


\begin{proof} Note first that $q/(1-p)\geq 1$ and
\be
    b_k = \bbbklr{1-\frac{q}{1-p}}p^k + \frac{q}{1-p},
\ee
hence $b_1,b_2,\dots$ is increasing with upper bound~$q/(1-p)$. The proof of
$a_k\leq b_k$ is now a simple induction on~$k$. By assumption $a_1\leq b_1$,
which verifies the base case. Using that $a_l\leq b_l$ for all $1\leq l\leq k$,
we have
\bes
    a_{k+1} & = q + \sum_{l=1}^{k}\beta_{k+1,l}a_l
    \leq q + \sum_{l=1}^{k}\beta_{k+1,l}b_l\\
    & \leq q + \sum_{l=1}^{k}\beta_{k+1,l}b_{k} 
    \leq q + pb_{k} = b_{k+1}. \qedhere
\ee
 
\end{proof}

\begin{proof}[Proof of Lemma~\ref{lem1}] First note that \eq{22} still holds if
we replace $A$ by $\-A :=
A\vee 1$. Let $\sigma_1:=1$ and define a new sequence $a_k = \sigma_k\kappa_k$
for $1\leq k \leq n$. Using inequality \eq{22} with $\eps =
c_n\alpha_n/\sigma_k$ for a constant $c_n>1$ to be chosen later,
\be
    a_k \leq \-A + 0.4c_n\alpha_n +
    \sum_{l=1}^{k-1} \beta_{k,l} a_l
\ee
for all $1\leq k\leq n$, where 
\be
    \beta_{k,l} = \frac{\sigma_kA_{k,l}}{c_n\sigma_{l} \alpha_n}.
\ee
Note that $\sum_{l=1}^{k-1}\beta_{k,l} \leq c_n^{-1}$.
Consider now the solution $b_k$ to the recursive equation
\be
    b_1 = 1; \qquad b_k = \-A + 0.4 c_n\alpha_n +
c_n^{-1}b_{k-1},\quad\text{for $2\leq k\leq n$.}
\ee
Note that $\-A + 0.4 c_n\alpha_n\geq 1$, hence we obtain
from Lemma~\ref{lem12} that
\be
    a_n\leq b_n \leq \frac{c_n(\-A+0.4c_n\alpha_n)}{c_n-1}.
\ee
Minimizing over $c_n>1$ we can chose 
\be
    c_n = 1 + \frac{\sqrt{2\alpha_n(2\alpha_n+5\-A))}}{2\alpha_n} 
    = 1+\frac{\alpha_n'}{2\alpha_n}.
\ee
Recalling that $\kappa_n = a_n/\sigma_n$, the claim follows.
\end{proof}

\section*{Acknowledgments}

The authors would like to thank Andrew Barbour, Omar El-Dakkak, Larry
Goldstein, Erol Pek\"oz and Nathan Ross for helpful discussions.


\end{document}